\newcommand{\Cone}{\operatorname{Cone}}
\newcommand{\ind}{\operatorname{ind}}
\newcommand{\dist}{\operatorname{dist}}
\newcommand{\p}{\sigma}
\newcommand{\ai}{{\bar{a}}}
\newcommand{\sms}{spherical manifold submetry {}}
\newcommand{\smss}{spherical manifold submetries {}}
\newcommand{\F}{\mathcal{F}}
\newcommand{\B}{\mathcal{B}}
\newcommand{\SA}{\mathcal{S}}
\newcommand{\Hor}{\mathcal{H}}
\newcommand{\reg}{{\operatorname{reg}}}
\newcommand{\sing}{\operatorname{sing}}
\newcommand{\scal}[1]{\langle #1 \rangle}
\newcommand{\In}{\subset}
\newcommand{\Z}{\mathbb{Z}}
\newcommand{\R}{\mathbb{R}}
\newcommand{\RR}{\mathbb{R}}
\newcommand{\C}{\mathbb{C}}
\newcommand{\sphere}{\mathbb{S}}
\newcommand{\OO}{\operatorname{O}}
\newcommand{\SO}{\operatorname{SO}}
\newcommand{\Sym}{\operatorname{Sym}}
\newcommand{\Subm}{\operatorname{{\bf Subm}}}
\newcommand{\MaxLapAlg}{\operatorname{{\bf MaxLapAlg}}}
\newtheorem{theorem}{Theorem}
\newtheorem{corollary}[theorem]{Corollary}
\newtheorem{lemma}[theorem]{Lemma}
\newtheorem{proposition}[theorem]{Proposition}
\newtheorem*{conjecture}{Conjecture}
\newtheorem{maintheorem}{Theorem}
\theoremstyle{definition}
\newtheorem{definition}[theorem]{Definition}
\theoremstyle{remark}
\newtheorem{remark}[theorem]{Remark}
\newtheorem{example}[theorem]{Example}
\title[Laplacian algebras]{Laplacian algebras, manifold submetries and the Inverse Invariant Theory Problem}
\date{}
\author[R.~Mendes]{Ricardo A. E. Mendes}
\address{University of Oklahoma, USA}
\email{ricardo.mendes@ou.edu}
\author[M.~Radeschi]{Marco Radeschi}
\address{University of Notre Dame, USA}
\email{mradesch@nd.edu}
\thanks{The first-named author received support from DFG ME 4801/1-1, and DFG SFB TRR 191. The second-named author has been supported by the grant NSF 1810913. }
\subjclass[2010]{ 53C12,  13A50}
\keywords{Singular Riemannian foliations, Invariant Theory}
\begin{document}

\begin{abstract}
Manifold submetries of the round sphere are a class of partitions of the round sphere that generalizes both singular Riemannian foliations, and the orbit decompositions by the orthogonal representations of compact groups. We exhibit a one-to-one correspondence between such manifold submetries and maximal Laplacian algebras, thus solving the Inverse Invariant Theory problem for this class of partitions. Moreover, a solution to the analogous problem is provided for two smaller classes, namely orthogonal representations of finite groups, and transnormal systems with closed leaves.
\end{abstract}

\maketitle


\section{Introduction}

A \emph{manifold submetry} is a map $\p:M\to X$ from a Riemannian manifold $M$ to a metric space $X$, such that metric balls are mapped to metric balls with the same radius, and such that the preimage of every point of $X$ is a smooth, possibly disconnected, 
submanifold of $M$. Typical examples of submetries arise from taking the quotient $M\to M/G$ under the isometric action of a compact group, or the leaf space quotient $M\to M/\F$ of a singular Riemannian foliation $(M,\F)$ or, more generally, of a transnormal system.

Much like the isometric action case, the local structure of a manifold submetry $\p:M\to X$ around a point $p\in M$ is given by a manifold submetry $\p_p:V\to \Cone(Y)$ from a (real) Euclidean vector space $V$ (the \emph{slice} at $p$) 
to a metric cone $\Cone(Y)$. This is equivalent to a manifold submetry $\p_p:\sphere(V)\to Y$ from the unit sphere of $V$ to the link $Y$ of the cone. Given the central role played by these manifold submetries, we give them a special name: \emph{spherical manifold submetries}.

Given a \sms $\p:\sphere(V)\to X$, we define the subalgebra of \emph{$\p$-basic polynomials}, as the algebra generated by homogeneous polynomials over $V$ which are constant along the fibers of $\p$. In the homogeneous case 
$\p:\sphere(V)\to \sphere(V)/G$, the $\p$-basic polynomials coincide with the ring of $G$-invariant polynomials $\RR[V]^G$, which is the central object studied in Classical Invariant Theory. Recently, the authors have made progress extending results of Classical Invariant Theory to the context of singular Riemannian
foliations \cite{LR, MR, MendesRadeschi16}. 

Going in the other direction, the \emph{Inverse Invariant Theory} problem is to characterize those subalgebras 
$A\subset k[V]$ of the polynomial algebra $k[V]$ over a field $k$ that can be realized as $A=k[V]^\Gamma$ for some representation $\rho:\Gamma\to \textrm{GL}(V,k)$. This problem has been solved in positive characteristic $p$, when $k$ is a Galois field of order $p^s$ \cite[Section 8.4]{NeusSmith}(hence necessarily $\Gamma$ is finite), 
but to the best of our knowledge no such result is known in characteristic zero. The main result of this paper shows that enlarging the category to include manifold submetries allows for a satisfying answer to this problem, and hence is very natural from the Invariant Theory point of view.
\begin{maintheorem}
\label{MT:1-1correspondence}
Let $V$ be a finite-dimensional Euclidean space. Then taking the algebra of basic polynomials induces a one-one correspondence between \smss $\p:\sphere(V)\to X$ and maximal, Laplacian algebras $A\subset \R[V]$.
\end{maintheorem}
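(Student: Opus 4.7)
The plan is to construct maps in both directions and verify mutual invertibility. In one direction, a \sms $\p\colon\sphere(V)\to X$ is sent to its algebra $A_\p\subset\R[V]$ of basic polynomials; in the other, a subalgebra $A\subset\R[V]$ is sent to the partition $\F_A$ of $\sphere(V)$ whose blocks are the connected components of common level sets of elements of $A$.

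For the forward direction, I would first verify that $A_\p$ is a Laplacian algebra. This reduces to the standard fact that for a manifold submetry the Laplacian of a basic function is basic, combined with the observation that the Euclidean Laplacian $\Delta$ preserves the grading of $\R[V]$ by degree. Maximality of $A_\p$ is essentially tautological: any homogeneous polynomial constant on the fibers of $\p$ belongs to $A_\p$ by construction, so no strictly larger Laplacian algebra can induce the same partition. Injectivity of $\p\mapsto A_\p$ then follows from the fact that $A_\p$ separates the fibers of $\p$, via Stone--Weierstrass on the compact quotient.

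For the reverse direction, given a maximal Laplacian algebra $A$, I would show that $\F_A$ is a \sms $\p_A\colon\sphere(V)\to X_A$, after which the identity $A_{\p_A}=A$ follows from maximality: the Laplacian algebras $A\subset A_{\p_A}$ share the same partition $\F_A$, hence coincide. Smoothness of the blocks of $\F_A$ should follow from a differential-of-generators argument, leveraging that $\Delta$-closed subalgebras of $\R[V]$ decompose richly into spherical-harmonic components, providing enough polynomials to give constant-rank Jacobians on each stratum.

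The main obstacle is the equidistance / submetry property for $\F_A$: showing the partition is truly a submetry rather than merely a smooth stratification. The natural analytic input is that restriction to $\sphere(V)$ leaves $A$ invariant under the spherical heat semigroup $e^{-t\Delta_{\sphere(V)}}$; rotational equivariance of this semigroup, together with the maximality of $A$, should identify orthogonal projection onto $A|_{\sphere(V)}$ with fiber-averaging, and a mean-curvature calculation relating the vertical divergence of a horizontal gradient to basic data should then produce horizontal geodesics that descend to geodesics in the quotient metric, giving the submetry condition. This is the step where I would expect to invoke the heavy machinery developed in \cite{LR, MR, MendesRadeschi16}.
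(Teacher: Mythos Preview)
Your proposal has two concrete errors. First, the reverse map is misdefined: you take \emph{connected components} of common level sets, but fibers of a spherical manifold submetry may be disconnected (this is the subject of Theorems \ref{MT:finite}--\ref{MT:disconnected}); taking connected components yields the submetry $\p_c$ of Theorem \ref{MT:disconnected}, whose basic algebra is the integral closure $A_c$, strictly larger than $A$ whenever $\p$ has a disconnected fiber. Second, the claim that $A_\p$ separates fibers ``via Stone--Weierstrass'' is circular: Stone--Weierstrass takes point-separation as a hypothesis, not a conclusion. That basic \emph{polynomials} (rather than merely basic continuous functions) separate fibers is the content of Proposition \ref{P:homog-pol}, proved by showing that the geometric averaging operator commutes with $\Delta$ and hence preserves polynomials---which in turn rests on the mean curvature of regular fibers being basic, itself requiring the transverse Jacobi field analysis of Section \ref{S:qgeods} in the manifold-submetry (as opposed to singular Riemannian foliation) setting. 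So ``the standard fact that for a manifold submetry the Laplacian of a basic function is basic'' is not standard at all; establishing it is a substantial part of the paper.

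You also substantially underestimate the reverse direction. The cited papers \cite{LR, MR, MendesRadeschi16} feed into the \emph{forward} direction; the construction $A\mapsto \hat\p_A$ is new here. The Laplacian condition yields a Reynolds operator (Theorem \ref{T:Reynolds}) and hence finite generation; the generators give a Riemannian submersion only on the regular set $V^{\reg}$ (Proposition \ref{P:regular}), extended to a submetry by metric completion (Proposition \ref{P:submetry}). The hard part---entirely absent from your sketch---is smoothness of the \emph{singular} fibers: this uses transverse Jacobi fields to show they are immersed (Proposition \ref{P:sing-immersed}), positive reach to show they are embedded (Proposition \ref{P:sing-embedded}), and a Riccati asymptotics argument to show all components have equal dimension (Proposition \ref{P:conn-comp-same-dim}). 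A ``constant-rank Jacobian'' argument cannot reach the singular fibers, where the rank drops by definition, and the heat-semigroup idea you outline at best recovers the Reynolds operator, not the geometric smoothness of singular level sets.
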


The concepts 
of maximal and Laplacian algebras are defined and discussed in section \ref{SS:Lapl}, where in particular a more precise statement of Theorem \ref{MT:1-1correspondence} is given in Theorem \ref{T:1-1correspondence}, in terms of equivalence of categories.  While the concept of maximal algebra is a bit technical, a Laplacian algebra is easily defined as a polynomial algebra $A\subset \RR[V]=\RR[x_1,\ldots x_n]$ such that $r^2=\sum_i x_i^2\in A$ and such that, for every polynomial $P\in A$, its Laplacian $\Delta P=\sum_i {\partial^2\over \partial x_i^2}P$ is in $A$ as well. Similar conditions have appeared in the literature before (cf. Remark \ref{R:Segal-Shale-Weil}), but to the best of our knowledge the concept of Laplacian algebra is new.

In view 
of the correspondence in Theorem \ref{MT:1-1correspondence}, it is natural to ask for algebraic characterizations of special classes of manifold submetries. As a first such example we consider manifold submetries with finite fibers, and provide an answer to the Inverse Invariant Theory problem for $k=\RR$ and $\Gamma$ finite:
\begin{maintheorem}
\label{MT:finite}
A subalgebra 
$A\subset \R[V]$ is of the form $A=\RR[V]^\Gamma$ for a finite group $\Gamma\subset \OO(V)$ if and only if $A$ is  maximal, Laplacian, and its  field of fractions has transcendence degree (over $\R$) equal to $\dim(V)$.
\end{maintheorem}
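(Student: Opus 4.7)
The plan is to reduce the statement to a geometric characterization via Theorem \ref{MT:1-1correspondence}: I aim to show that, under the bijection of that theorem, $A$ corresponds to the orbit decomposition of a finite subgroup $\Gamma\subset \OO(V)$ if and only if the fibers of the associated \sms are finite, which in turn is equivalent to the transcendence-degree condition. The forward direction is then immediate: for a finite $\Gamma\subset \OO(V)$, the orbit partition on $\sphere(V)$ is a \sms whose algebra of basic polynomials is $A=\RR[V]^\Gamma$, and this $A$ is maximal and Laplacian by Theorem \ref{MT:1-1correspondence}; since $\RR[V]$ is integral over $\RR[V]^\Gamma$, the latter has field of fractions of transcendence degree $\dim V$.

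For the converse, let $A$ satisfy the three conditions and let $\p : \sphere(V)\to X$ be the associated \sms. Choose homogeneous generators $f_1,\dots,f_k$ of $A$ and form the polynomial map $F=(f_1,\dots,f_k):V\to \RR^k$, whose level sets are unions of cones over $\p$-fibers. The generic rank of $dF$ equals the transcendence degree of $\RR(f_1,\dots,f_k)$ over $\RR$, which by hypothesis equals $\dim V$. Thus $F$ is submersive on a dense open set, so the generic fibers of $\p$ are $0$-dimensional; since in a manifold submetry the singular leaves cannot exceed the dimension of the regular ones, every fiber of $\p$ is finite.

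The main obstacle, and the core of the argument, is constructing a finite subgroup $\Gamma\subset \OO(V)$ whose orbits are precisely the fibers of $\p$. Near a regular point $p\in \sphere(V)$ with fiber $\{p_1,\dots,p_m\}$, the submetry property together with $0$-dimensionality of fibers provides, for each pair $(i,j)$, a local isometry $\phi_{ij}$ of $\sphere(V)$ sending a neighborhood of $p_i$ onto one of $p_j$ and preserving the partition into $\p$-fibers. Euclidean rigidity extends each $\phi_{ij}$ uniquely to an element of $\OO(V)$, and homogeneity of the basic polynomials guarantees that this extension respects the radial structure. Let $\Gamma$ be the subgroup of $\OO(V)$ generated by all such extensions. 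By construction $\Gamma$ preserves the partition of $\sphere(V)$ into the finite $\p$-fibers, so $\Gamma$ has $0$-dimensional orbits and, being closed in the compact group $\OO(V)$, is finite. On the regular stratum $\Gamma$ acts transitively on each fiber, so $\RR[V]^\Gamma \subseteq A$; since both are maximal and Laplacian and induce partitions agreeing on an open dense set, the bijection of Theorem \ref{MT:1-1correspondence} forces $A=\RR[V]^\Gamma$.

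The delicate point will be verifying that the locally defined isometries $\phi_{ij}$ extend coherently to a well-defined action of $\Gamma$ on all of $\sphere(V)$, including the singular fibers. This should follow from the real-analyticity of $\p$-basic polynomials and the unique-extension property for isometries of Euclidean space, but requires some care to rule out potential pathologies where strata of different dimensions meet.
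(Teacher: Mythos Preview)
Your approach is correct in spirit but genuinely different from the paper's. The paper derives Theorem~\ref{MT:finite} as an immediate corollary of Theorem~\ref{MT:disconnected}: once one knows that every $\p$-fiber is finite, the factorization $\sphere(V)\xrightarrow{\p_c} X_c\to X$ has $\p_c$ a submetry with connected zero-dimensional fibers, i.e.\ an isometry; hence $X_c=\sphere(V)$ and the finite group $G$ acting on $X_c$ provided by Theorem~\ref{MT:disconnected} is the desired $\Gamma\subset\OO(V)$. This route packages all the covering-space arguments into Theorem~\ref{MT:disconnected} (proved via orbifold covering theory), and the finite-fiber case falls out with no extra work.

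Your route---building $\Gamma$ directly from local isometries near regular points and extending by Euclidean rigidity---is more elementary and avoids the orbifold machinery, but you left the main step unfinished (and said so). Here is how to close it cleanly. Each local isometry $\phi_{ij}$ satisfies $\p\circ\phi_{ij}=\p$ on a neighborhood of $p_i$, so for every $f\in A$ the polynomial $f\circ g-f$ (with $g\in\OO(V)$ the extension of $\phi_{ij}$) vanishes on an open set, hence identically. Thus $g$ lies in the closed subgroup $\Gamma:=\{g\in\OO(V):f\circ g=f\ \forall f\in A\}$, and every such $g$ preserves each $\p$-fiber globally---the ``delicate point'' disappears. Finiteness of $\Gamma$ then needs no appeal to closedness: at any point $p\in V^{\reg}$ the stabilizer in $\Gamma$ is trivial (an isometry fixing $p$ and commuting with the local isometry $\p$ near $p$ fixes a neighborhood, hence is the identity), so $|\Gamma|$ is bounded by the fiber cardinality.

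Two minor corrections to your final paragraph. First, the inclusion you can read off directly is $A\subseteq\RR[V]^\Gamma$ (since $\Gamma$ preserves each fiber), not the reverse; the reverse comes from transitivity on regular fibers plus maximality of $A$, or more simply from the fact that any $\Gamma$-invariant polynomial is constant on regular fibers, hence by continuity on all fibers, hence lies in $\mathcal{B}(\p)=A$. Second, invoking ``the bijection of Theorem~\ref{MT:1-1correspondence}'' for two algebras whose partitions agree only on a dense open set is not quite legitimate as stated; you need the density argument just mentioned, or the field-of-fractions trick used in the proof of Theorem~\ref{T:sms construction}(b).
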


A manifold submetry has connected fibers if and only if the corresponding fiber decomposition is a transnormal system. Under this identification, it is possible to algebraically characterize transnormal systems with closed leaves in spheres as well:
\begin{maintheorem}
\label{MT:connected-vs-integral}
A manifold submetry $\p:\sphere(V)\to X$ has connected fibers if and only if the corresponding maximal Laplacian algebra $A$ is integrally closed in $\RR[V]$.
\end{maintheorem}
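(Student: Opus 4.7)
I would prove the two implications separately: the forward direction is essentially formal once one works with rescaled fibers, while the converse requires constructing a strict enlargement of $A$ and producing explicit integral relations.

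$(\Rightarrow)$: Suppose $\p$ has connected fibers and let $f \in \R[V]$ satisfy $f^n + a_{n-1} f^{n-1} + \cdots + a_0 = 0$ with $a_i \in A$. For any fiber $F \subset \sphere(V)$ of $\p$ and any scalar $t \geq 0$, the rescaled set $tF \subset V$ is connected, and every $a_i$ is constant on $tF$: indeed, $a_i$ decomposes into homogeneous components $a_{i,d} \in A$, which are products of homogeneous $\p$-basic generators and therefore cone-basic, so their restrictions to $tF$ are the constants $t^d (a_{i,d}|_F)$. Hence $f|_{tF}$ satisfies a polynomial with constant coefficients and must be constant by continuity plus connectedness. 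Writing $f = \sum_d f_d$ in homogeneous components and expanding $f(tx) = \sum_d t^d f_d(x)$ as a polynomial in $t$, the equality $f(tx_1) = f(tx_2)$ for all $x_1, x_2 \in F$ and all $t \geq 0$ forces $f_d|_F$ to be constant for every $d$. Thus each homogeneous component $f_d$ is a $\p$-basic generator, and $f \in A$.

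$(\Leftarrow)$ (by contrapositive): Assume some fiber of $\p$ has at least two connected components. I would refine $\p$ to the map $\tilde\p : \sphere(V) \to \tilde X$ whose fibers are the connected components of fibers of $\p$, verify that $\tilde\p$ is itself a spherical manifold submetry (expected to follow from the paper's local slice description applied componentwise), and apply Theorem \ref{MT:1-1correspondence} to obtain a maximal Laplacian algebra $\tilde A$ with $A \subsetneq \tilde A \subseteq \R[V]$. It then suffices to show $\tilde A$ is integral over $A$: any $f \in \tilde A \setminus A$ then witnesses the failure of integral closure of $A$ in $\R[V]$.

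For the integrality, let $N$ denote the number of connected components of a principal fiber of $\p$ (constant on the principal stratum). For $f \in \tilde A$, $f$ is constant on each component of each fiber, taking values $c_1(x), \ldots, c_N(x)$ on a principal fiber through $x$. Applying the averaging operator $\Av : \R[V] \to A$ to each power $f^m$ yields a polynomial in $A$ which, on a principal fiber whose $N$ components carry equal volume, equals $\tfrac{1}{N} \sum_i c_i(x)^m$. Newton's identities then express the elementary symmetric polynomials $e_m(c_1, \ldots, c_N)$ as polynomials in these averages, hence as elements of $A$, so the monic polynomial $T^N - e_1 T^{N-1} + \cdots + (-1)^N e_N \in A[T]$ annihilates $f$ on the principal stratum and thus identically on $V$. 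The principal obstacles are (i) verifying that $\tilde\p$ is a genuine spherical manifold submetry, and (ii) establishing the equal-volume property for components of principal fibers, expected from local transitivity of the holonomy pseudogroup of $\p$ on the principal stratum; should the weights differ, a weighted analog of Newton's identities can replace the clean step above.
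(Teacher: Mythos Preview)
Your forward direction is essentially the paper's argument. The detour through rescaled fibers $tF$ is unnecessary: since every element of $A$ is a polynomial combination of homogeneous $\sigma$-basic generators, each $a_i$ is already constant on the spherical fiber $L$, so $f|_L$ is a continuous function on a connected set taking values among the finitely many real roots of a fixed polynomial, hence constant; maximality of $A$ then gives $f\in A$ directly.

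Your converse is a genuinely different route. The paper instead invokes Theorem~\ref{MT:disconnected} to produce a finite group $G$ acting on $X_c$ with $X_c/G\cong X$, shows (Lemma~\ref{L:Group-action-on-rings}) that $G$ acts on $A_c=\mathcal{B}(\sigma_c)$ with fixed ring $A$, and for $f\in A_c\setminus A$ writes down the monic annihilator $\prod_{g\in G}(t-g\cdot f)\in A[t]$. Your averaging-plus-Newton approach avoids constructing $G$ explicitly, but everything rests on the equal-volume claim, and the justification you offer is not an argument. The claim happens to be true: since the mean curvature of the regular fibers is $\sigma$-basic (not merely $\sigma_c$-basic), the vector field $\nabla\log v$ on $X_c^{\reg}$ descends to $X^{\reg}$, so $v$ transforms under the deck group of the covering $X_c^{\reg}\to X^{\reg}$ by a homomorphism into $\R_{>0}$, which is trivial since the group is finite. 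But establishing that $X_c^{\reg}\to X^{\reg}$ is a finite connected covering is already most of the input behind Theorem~\ref{MT:disconnected}, at which point the paper's Galois-polynomial argument is shorter than yours. Your fallback to ``weighted Newton's identities'' does not rescue the situation: from weighted power sums $\sum_i w_i c_i^m$ with unequal $w_i$ one cannot extract the elementary symmetric functions $e_k(c_1,\ldots,c_N)$ as elements of $A$, and it is not even clear that these functions lie in $\R[V]$.
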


When the fibers of $\sphere(V)\to X$ are not connected, it turns out that it is possible to decompose the problem into the case of connected fibers, and the case of finite group actions.
\begin{maintheorem}
\label{MT:disconnected}
Every manifold submetry $\p:\sphere(V)\to X$ can be factored through $\sphere(V)\stackrel{\p_c}{\to} X_c\to X$, where:
\begin{enumerate}
\item $X_c$ is a metric space acted on isometrically by a finite group $G$.
\item $\sphere(V)\to X_c$ is a manifold submetry with connected fibers (equivalently, a transnormal system).
\item $X$ is isometric to $X_c/G$, and $X_c\to X$ is equivalent to the quotient map $X_c\to X_c/G$.
\end{enumerate}
\end{maintheorem}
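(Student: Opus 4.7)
The plan is to work along the equivalence provided by Theorem~\ref{MT:1-1correspondence} and its refinements in Theorems~\ref{MT:finite} and~\ref{MT:connected-vs-integral}. Let $A\subset\RR[V]$ be the maximal Laplacian algebra corresponding to $\sigma$ under Theorem~\ref{MT:1-1correspondence}, and let $A_c$ denote the integral closure of $A$ inside $\RR[V]$. The bulk of the work will be to show that $A_c$ is itself a maximal Laplacian algebra. It contains $r^2\in A$, it is integrally closed in $\RR[V]$ by construction, and the technical heart is to establish that $A_c$ is preserved by the Laplacian $\Delta$ (together with whatever additional closure condition is built into ``maximal'' in Section~\ref{SS:Lapl}). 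A natural way to attempt this is to represent $f\in A_c$ through the coefficients of its monic minimal equation over $A$, which are symmetric functions of the Galois conjugates of $f$, and to relate $\Delta f$ to those conjugates, leveraging that $A$ is already $\Delta$-closed and that $A_c$ preserves the grading of $\RR[V]$.

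Once $A_c$ is known to be maximal Laplacian, Theorem~\ref{MT:1-1correspondence} produces a \sms $\sigma_c\colon\sphere(V)\to X_c$ with basic algebra $A_c$, and the inclusion $A\subset A_c$, by functoriality of the correspondence, yields the desired factorization $\sigma=\pi\circ\sigma_c$ for some surjection $\pi\colon X_c\to X$. Since $A_c$ is integrally closed in $\RR[V]$ by construction, Theorem~\ref{MT:connected-vs-integral} immediately gives that $\sigma_c$ has connected fibers, establishing item~(2).

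For items~(1) and~(3), the candidate group is $G:=\Aut_A(A_c)$, the group of $A$-algebra automorphisms of $A_c$. Since $A_c$ is a finite $A$-module (integral and finitely generated), $G$ is finite. I would then verify that $G$ preserves the inner-product data underlying the Laplacian-algebra structure of $A_c$, so that, via Theorem~\ref{MT:1-1correspondence}, $G$ acts on $X_c$ by isometries. To conclude that $\pi$ is equivalent to the quotient map $X_c\to X_c/G$, it suffices to show $A_c^G=A$, i.e.\ that the finite extension $\operatorname{Frac}(A_c)/\operatorname{Frac}(A)$ is Galois. The model case of an isometric action of a compact group $K$ on $\sphere(V)$ (where $A_c=\RR[V]^{K_0}$ and $G=K/K_0$) provides the template, and in general Galoisness should follow from the defining property of the integral closure in $\RR[V]$: any conjugate of $f\in A_c$ over $\operatorname{Frac}(A)$ which lies in $\RR[V]$ is again integral over $A$, hence already in $A_c$.

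The main obstacle is Step~1, namely showing that the integral closure $A_c$ is $\Delta$-closed and maximal. Unlike sums and products, $\Delta$ is not of ring-theoretic nature, so its interaction with integral closure requires genuine input. If this algebraic route proves too delicate, a parallel geometric approach is available: define $\sigma_c$ directly as the refinement of $\sigma$ by connected components of fibers, use the local slice model (spherical manifold submetries of slices are representation-like and behave well under passing to the identity component) to verify that $\sigma_c$ is a \sms, and then identify its basic algebra with the integral closure of $A$.
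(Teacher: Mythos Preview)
Your primary algebraic route has two genuine gaps, and in fact is circular with respect to the paper's logical structure.

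\textbf{Circularity.} You invoke Theorem~\ref{MT:connected-vs-integral} to conclude that $\mathcal{L}(A_c)$ has connected fibers. But in the paper, Theorem~\ref{MT:connected-vs-integral} (and likewise Theorem~\ref{MT:finite}) is \emph{deduced from} Theorem~\ref{MT:disconnected}: the direction ``integrally closed $\Rightarrow$ connected fibers'' is proved by contraposition, using the group $G$ produced by Theorem~\ref{MT:disconnected} to exhibit, for any $f\in A_c\setminus A$, the monic polynomial $\prod_{g\in G}(t-g\cdot f)\in A[t]$. So you cannot use Theorem~\ref{MT:connected-vs-integral} here without an independent proof of that direction, which you do not supply. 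Relatedly, even if $\mathcal{L}(A_c)$ had connected fibers, you give no argument that these fibers are exactly the connected components of the $\sigma$-fibers rather than something strictly finer.

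\textbf{Laplacian closure and Galois.} Your sketch for showing $\Delta(A_c)\subset A_c$ via symmetric functions of Galois conjugates presupposes that $F(A_c)/F(A)$ is Galois, which is precisely what you later try to establish using $A_c$. And your argument for normality (``any conjugate lying in $\RR[V]$ is again integral over $A$'') does not show that all conjugates lie in $\RR[V]$ in the first place; over a real base field this can genuinely fail. In the paper this implication runs the other way: the group $G$ is constructed geometrically, and only afterwards is $A_c^{G}=A$ verified (Lemma~\ref{L:Group-action-on-rings}), from which the Galois property and the identification of $A_c$ with the integral closure are read off.

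\textbf{What the paper does.} The paper takes your ``backup'' geometric route, but with a specific mechanism you do not mention. One already has the factorization $\sphere(V)\to X_c\to X$ with $\sigma_c$ the connected-component refinement (this is general submetry theory). The group $G$ is then produced by restricting to the regular part: $(\sphere(V)^{(2)},\mathcal{F})$ is a full singular Riemannian foliation (Proposition~\ref{P:orbifold-part-srf}), its quotient $O_c$ is a simply connected Riemannian orbifold, and the induced map $O_c\to O$ is a Riemannian orbifold covering, hence $O=O_c/G$ with $G=\pi_1^{\mathrm{orb}}(O)$. Density then extends the $G$-action to all of $X_c$. Your local-slice idea does not by itself yield the \emph{global} finite group; the orbifold-covering argument is what supplies it.
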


The result above can also be interpreted as evidence that the submetry $X_c\to X$ is in some sense a Galois covering. Some previous result about (branched) coverings of Alexandrov spaces, albeit from a different point of view, can be found in \cite{HarveySearle}.

The 
authors do not know of any example of a Laplacian algebra that is not also maximal, and make the following: 

\begin{conjecture}
Every Laplacian algebra is maximal.
\end{conjecture}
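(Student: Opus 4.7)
\medskip

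\noindent\textbf{Proof proposal.}
The natural strategy is to construct, out of any Laplacian algebra $A \subset \RR[V]$, a \sms whose basic algebra equals $A$; maximality of $A$ would then follow immediately from Theorem \ref{MT:1-1correspondence}. Choose homogeneous generators $f_1, \ldots, f_N$ of $A$ and form $F = (f_1, \ldots, f_N) : \sphere(V) \to \RR^N$. Endow $X_A := F(\sphere(V))$ with the quotient pseudometric and set $\p_A := F$. By construction $\p_A$ is $1$-Lipschitz and surjective, and its basic algebra contains $A$. Two statements then remain to be verified: (i) $\p_A$ is a \sms (smooth fibers plus the metric balls-to-balls condition), and (ii) the basic algebra of $\p_A$ equals $A$.

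For (ii) I would exploit a Kostant-style decomposition. Writing $\RR[V] = \RR[r^2] \otimes \Hor$ with $\Hor$ the space of harmonic polynomials, the assumptions that $A$ is graded, contains $r^2$, and is closed under $\Delta$ should force $A = \RR[r^2] \otimes \Hor_A$ with $\Hor_A := A \cap \Hor$; the mechanism is iteration of the identity $\Delta(r^{2k} h) = 2k(2k + 2j + n - 2)\, r^{2k-2} h$ for a harmonic $h$ of degree $j$, which lets one peel off the $r^2$-factors of an element of $A$ and recover its harmonic components inside $A$. A $\p_A$-basic polynomial is constant on the fibers of $F|_{\sphere(V)}$ and decomposes bihomogeneously in the tensor product; each harmonic component is then determined by the values $F(p)$ as $p$ runs through $\sphere(V)$, and one would argue (via a Nullstellensatz/real-algebraic separation argument together with the closure under $\Delta$) that each such component is already a polynomial in the $f_i$'s.

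The main obstacle is (i), the smoothness of the fibers of $F$. In the equivariant setting smoothness comes from a compact Lie group acting transitively on each fiber; in the absence of such a group one must extract analogous regularity from $A$ alone. The plan is to use Laplacian closure to produce, for each $p \in \sphere(V)$, a Borel probability measure $\mu_p$ supported on $F^{-1}(F(p))$ with $\int f\, d\mu_p = f(p)$ for every $f \in A$, playing the role of the Haar integral over a group orbit. Existence should follow from a Choquet/Hahn--Banach argument combined with the fact that $A$ separates the fibers of $F$ by definition; the construction and regularity of $\mu_p$ is the genuinely nontrivial input. Transporting $\mu_p$ along the gradient flows of polynomials in $A$ (which preserve the fibers of $F$ thanks to $\Delta$-closure) would then force each fiber to be a smooth submanifold of constant dimension, completing (i). Producing this averaging operator purely from the algebra, with no symmetry group to lean on, is essentially the reverse direction of invariant theory without a group, and is where any serious attempt at the conjecture will have to concentrate.
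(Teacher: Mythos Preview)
This statement is a \emph{conjecture} in the paper, not a theorem: the authors do not prove it, and offer only the two special cases in Section~\ref{S:maximal-and-laplacian} (quadratic generators, or two generators) as evidence. So there is no ``paper's own proof'' to compare against; the relevant comparison is with what the paper \emph{does} establish and where it says the difficulty lies.

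Your step (i) is not the obstacle --- the paper already carries it out in full. Theorem~\ref{T:sms construction}(a) (via Propositions~\ref{P:regular}--\ref{P:conn-comp-same-dim}) builds, from \emph{any} Laplacian algebra $A$, a \sms $\hat\p_A:\sphere(V)\to\hat X$ whose fibers agree with the level sets of $A$ on the open dense set $V^{\reg}$. The Reynolds/averaging operator you propose to construct by Choquet--Hahn--Banach is produced in Theorem~\ref{T:Reynolds} purely algebraically, as orthogonal projection for the inner product $\scal{f,g}_d=\hat f(g)$; no measures or compactness arguments are needed. Also, your claim that gradient flows of polynomials in $A$ preserve the fibers of $F$ is false: for $f\in A$ the gradient $\nabla f$ is \emph{normal} to the $F$-fibers (since $f$ is constant on them), so its flow moves points off the fiber, not along it. What is true is that $\scal{\nabla f,\nabla g}=f\bullet_1 g\in A$, which is what makes $\rho$ a Riemannian submersion on $V^{\reg}$ (Proposition~\ref{P:regular}); it does not give a flow tangent to fibers.

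The genuine obstacle is your (ii): showing that the basic algebra $\hat A=\mathcal B(\hat\p_A)$ is not strictly larger than $A$. The paper isolates this in the Remark following the proof of Theorem~\ref{T:sms construction}: since $A$ and $\hat A$ are both Laplacian and $A\subseteq\hat A$, Lemma~\ref{L:corReynolds}(b) gives $A=F(A)\cap\RR[V]$ and $\hat A=F(\hat A)\cap\RR[V]$, so $A=\hat A$ iff the fields of fractions coincide. Your harmonic decomposition $A=\RR[r^2]\otimes\mathcal H_A$ is correct and used implicitly in the paper, but it does not by itself control $F(\hat A)$; the vague ``Nullstellensatz/real-algebraic separation argument'' is exactly where a proof is missing, and neither you nor the authors supply one.
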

As evidence we point out that this claim holds in two special but important situations, namely when the Laplacian algebra is either generated by quadratic polynomials, or by two polynomials. The former case  is essentially the main result in \cite{MendesRadeschi16}, while the latter follows from M\"unzner's results about isoparametric hypersurfaces of spheres \cite{Muenzner80}, see Section \ref{S:maximal-and-laplacian}. If this conjecture is true in general, it would have an interesting consequence in Invariant Theory: being Laplacian would be a necessary and sufficient condition for a \emph{separating} algebra of invariants to be the whole algebra of invariants. This in turn would have exciting applications for example to the study of polarizations for representations of finite groups.

{\bf The proofs}. 
The 
first part of the proof of Theorem \ref{MT:1-1correspondence} consists of showing that spherical manifold submetries are \emph{determined} by their algebras of basic polynomials, in the sense that such polynomials separate fibers, so that in particular spherical manifold submetries are objects of an \emph{algebraic nature}. This follows along the same lines as in the special case of singular Riemannian foliations, previously established in \cite{LR}, namely through the study of the \emph{averaging operator} via transverse Jacobi fields and a bootstrapping argument with elliptic regularity. 

The second part of the proof of Theorem \ref{MT:1-1correspondence} 
 is more involved. The fundamental result behind it is a procedure (Theorem \ref{T:sms construction}) that allows to build a \sms $\hat\p_A:\sphere(V)\to \hat X$ out of a Laplacian algebra $A$, without the maximality assumption. When maximality is added, this procedure is the inverse of taking basic polynomials. 
The 
\sms $\hat\p_A$ is first constructed on the regular part, and then extended to the whole sphere by metric completion. Smoothness of $\hat\p_A$ is proved using a combination of differential geometric arguments involving transverse Jacobi fields, and metric results about submetries from \cite{Lyt}. The second part of Theorem \ref{T:sms construction}, under the additional assumption that $A$ is maximal, \todo[color=cyan]{removed mention to Luna's Lemma}
relies on the fact that Laplacian algebras behave very much like algebras of invariant polynomials. More precisely, 
they admit a \emph{Reynolds operator}, which is an abstraction of the averaging operator, see Theorem \ref{T:Reynolds}.

The key 
to the proof of Theorem \ref{MT:disconnected} is producing the finite group $\Gamma$. This is done by restricting the map $X_c\to X$ to certain open dense subsets which are isometric to \emph{Riemannian orbifolds},  proving that this new map is a Galois orbifold covering, and taking $\Gamma$ to be the group of deck transformations. Theorems \ref{MT:finite} and \ref{MT:connected-vs-integral} essentially follow from Theorem \ref{MT:disconnected}.

{\bf The paper is structured as follows:} 
In Section \ref{S:correspondence} we define and discuss the categories of spherical manifold submetries and maximal Laplacian algebras, and the two functors that will establish the equivalence of the two categories, allowing us to give a formal statement of Theorem \ref{MT:1-1correspondence}.

The remainder of the paper is divided into three parts, with the first two devoted to the proof of 
Theorem \ref{MT:1-1correspondence}. Part 1 contains Sections \ref{S:qgeods} and \ref{S:spherical-ms}, and is focused on showing that the algebra of basic polynomials of a \sms is a maximal Laplacian algebra. The other direction of showing that maximal Laplacian algebras give rise to \smss is the focus of Part 2, consisting of Sections \ref{SS:duality} through \ref{S:algebras-to-smss}.

Part 3 contains Section \ref{S:disc-fibers}, about characterizing \smss with disconnected fibers, and Section \ref{S:maximal-and-laplacian}, where we  provide evidence to the Conjecture that every Laplacian algebra is maximal.

In the two final Appendices we collect facts that are either well known or that follow easily from known results: in the first appendix we collect results about isotropic and Lagrangian families of Jacobi fields along a geodesic. In the second, we lay out the basic properties of manifold submetries that closely follow those of singular Riemannian foliations.

\section*{Acknowledgements}
It is a pleasure to thank C. Lange for discussions regarding orbifold coverings, and A. Lytchak for pointing out the concept of positive reach used in Section \ref{S:algebras-to-smss}. We also thank the anonymous referee for several  suggestions that considerably improved the paper, including simplifications of the proofs of Proposition \ref{P:sing-embedded}, of Theorem \ref{T:sms construction}, and of Theorem \ref{MT:disconnected}.


\section{The correspondence between maps and algebras}\label{S:correspondence}

The goal of this section is to state a more formal version of Theorems \ref{MT:1-1correspondence} and \ref{MT:connected-vs-integral}, introducing the two categories we will work with, and defining two functors between them.

\subsection{Manifold submetries}\label{SS:manifold-submetries}

Recall that a \emph{submetry} is a continuous map $\p:X\to Y$ between metric spaces, such that for every $p\in X$ and every closed metric ball $\bar{B}_r(p)$, one has $\p(\bar{B}_r(p))=\bar{B}_r(\p(p))$.
\begin{definition}
A $C^k$-\emph{manifold submetry} is a submetry $\p:M\to X$ from a Riemannian manifold $M$ to a metric space $X$, whose fibers are $C^k$-submanifolds of $M$.
\end{definition}

Unless otherwise specified, we will work with $C^{\infty}$-manifold submetries.
This definition is slightly stronger than the definition of splitting submetry defined in \cite{Lyt}. Recall that two submanifolds $N_1, N_2$ of a Riemannian manifold $M$ are called \emph{equidistant} if for any $p, q\in N_1$, $d(p, N_2)=d(q,N_2)$ and vice versa. It is easy to check that a map $\p: M\to X$ is a manifold submetry, if and only if the fibers are smooth and equidistant. Moreover, the distance function on $X$ satisfies the following:
\begin{equation}
d_X(p_*,q_*)=d_M(\p^{-1}(p_*), \p^{-1}(q_*)).
\end{equation}

\begin{remark} It follows from the definition of manifold submetry, that:
\begin{enumerate}
\item Different fibers are allowed to have different dimension.
\item Fibers are allowed to be disconnected, but in that case the connected components of each fiber must have the same dimension.
\end{enumerate}
\end{remark}

The notion of manifold submetry is strongly related to the notions of {transnormal system} \cite{Bolton} and {singular Riemannian foliation} \cite{Molino}:
\begin{definition}\label{D:srf}
A \emph{transnormal system} is a partition $\mathcal{F}$ of a Riemannian manifold $M$ into complete, connected, injectively immersed submanifolds (called \emph{leaves}), such that every geodesic starting perpendicular to a leaf stays perpendicular to all leaves. A \emph{singular Riemannian foliation} is a transnormal system, which admits a family of smooth vector fields spanning the leaves at all points.
\end{definition}
We point out the main differences, and similarities, between these concepts:
\begin{itemize}
\item The leaves of transnormal systems can be non-closed, while the fibers of \smss can be disconnected. On the other hand, for every manifold submetry with connected fibers, the fibers define a transnormal system with closed leaves, and vice versa.
\item Given a singular Riemannian foliation $(M, \F)$, then taking closures of the leaves of $\F$ induces a transnormal system $(M,\overline{\F})$ (even better, a singular Riemannian foliation, cf. \cite{AlexandrinoRadeschiMolinoConj}). In particular, the projection $M\to M/\overline{\F}$ is a manifold submetry.
\todo{(1): Added this point}
\item Singular Riemannian foliations are, in principle, more restricted than transnormal systems. However, it is an open question whether or not every transnormal system is in fact a singular Riemannian foliation.
\end{itemize}

Given a Riemannian manifold $M$, we let $\Subm(M)$ be the category whose objects are manifold submetries $\sigma: M\to X$ and whose morphisms $(\sigma_1:M\to X_1)\to (\sigma_2:M\to X_2)$ are maps $f:X_1\to X_2$ such that $f\circ \sigma_1=\sigma_2$. We denote by $\sim$ the categorical isomorphism relation. Notice that $\p_1\sim \p_2$ if and only the partition of $M$ into $\p_1$-fibers is the same as the partition into $\p_2$-fibers.

\subsection{Maximal Laplacian algebras}\label{SS:Lapl}

Fix $V$ an $n$-dimensional Euclidean vector space, and let $\sphere(V)$ its unit sphere. Define $\RR[V]$ as the space of polynomials over $x_1\ldots x_n$ for some orthonormal basis $\{x_1,\ldots x_n\}$ of $V^*$. We define:

\begin{definition}
A polynomial algebra $A\subseteq \RR[V]$ is called \emph{Laplacian} if $r^2:=\sum_i x_i^2$ belongs to $A$, and for every $f\in A$, the Laplacian $\Delta f=\sum_i{\partial ^2\over \partial x_i^2}(f)$ belongs to $A$ as well.
\end{definition}
Notice that $r^2$ and $\Delta$ do not depend on the specific choice of orthonormal basis $x_1,\ldots x_n$, and thus are well defined.

\begin{remark}\label{R:Segal-Shale-Weil}
The operators $\Delta, r^2: \RR[V]\to \RR[V]$ induce a well-known action of $\mathfrak{sl}(2,\RR)$ on $\RR[V]$ related to the \emph{Segal-Shale-Weil representation}, cf. \cite[Ch. 2.1]{HT}. From this point of view, Laplacian algebras are simply $\mathfrak{sl}(2,\RR)$-invariant subalgebras of $\RR[V]$.
\end{remark}

\begin{definition}
\label{D:maximal}
Given a subalgebra
$A\subset\RR[V]$, define the equivalence relation $\sim_A$ on $\sphere(V)$ by letting $p\sim_A q$ if $f(p)=f(q)$ for every $f\in A$.
The algebra $A$ is called \emph{maximal} if it cannot be enlarged without changing the relation $\sim_A$. In other words, for every $f\notin A$ there exist $p,q\in \sphere(V)$ such that $p\sim_A q$ but $f(p)\neq f(q)$.
\end{definition}

We define $\MaxLapAlg(V)$ the category whose objects are the maximal Laplacian subalgebras of $\RR[V]$, and the morphisms are simply the inclusions $A_1\subseteq A_2$. 

\begin{remark}
If a group $G$ acts orthogonally on $V$, it leaves the Laplace operator fixed, so that the algebra $A=\RR[V]^G$ of invariant polynomials is a Laplacian algebra. If $G$ is additionally compact, then $A$ separates $G$-orbits, and hence $x\sim_A y$ if and only if $x,y$ belong to the same $G$-orbit. It follows immediately from Definition \ref{D:maximal} that $A$ is maximal.
\end{remark}

\subsection{Correspondence}
Given a map $\p:\sphere(V)\to X$ onto some set $X$, we define $\mathcal{B}(\p)\subset \RR[V]$ the algebra of  \emph{$\p$-basic polynomials}, that is, the algebra generated by homogeneous polynomials which are constant on the fibers of $\p$. On the other hand, given $A\subset\RR[V]$, define the set $X_A=\sphere(V)/\sim_A$ (with $\sim_A$ as in Definition \ref{D:maximal}), and $\mathcal{L}(A)=\p_A:\sphere(V)\to X_A$ the natural quotient map.
If the fibers of $\p_A$ are equidistant, then $X_A$ can be given the structure of a metric space, by defining $d_{X_A}(p_*,q_*)=d_{\sphere(V)}(\p_A^{-1}(p_*), \p_A^{-1}(q_*))$. With respect to this metric structure, $\p_A$ becomes a submetry.

We can finally restate Theorems \ref{MT:1-1correspondence} and \ref{MT:connected-vs-integral}:
\begin{theorem}[Theorem \ref{MT:1-1correspondence}]\label{T:1-1correspondence}
For any Euclidean vector space $V$, the maps $\mathcal{B}$, $\mathcal{L}$ above define contravariant functors
\[
\begin{tikzcd}[ampersand replacement=\&]
\Subm(\sphere(V))/\sim \arrow[bend right]{r}{\mathcal{B}} \& 
\MaxLapAlg(V)\arrow[bend right]{l}{\mathcal{L}}
\end{tikzcd}
\]
which provide an equivalence between the two categories.
\end{theorem}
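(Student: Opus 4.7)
The plan is to decompose the equivalence into three pieces: (i) $\mathcal{B}$ really lands in $\MaxLapAlg(V)$; (ii) $\mathcal{L}$ really lands in $\Subm(\sphere(V))/\sim$; (iii) the two functors are mutually inverse up to the isomorphism relations on each side, and both are functorial on morphisms. Pieces (i) and (ii) are the two substantial parts advertised in the introduction (Parts 1 and 2 of the paper), so my job here is really to write the glue, once those ingredients are in hand.

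For (i), I would begin by checking the formalities that are essentially by inspection: the function $r^2$ restricts to $1$ on $\sphere(V)$ and so is basic, while invariance of $\Delta$ under $\OO(V)$ combined with the fact that fibers of $\sigma$ are equidistant allows one to show (via the averaging operator, cf.\ the sketch in the introduction) that $\Delta$ preserves $\mathcal{B}(\sigma)$. The deeper content is that $\mathcal{B}(\sigma)$ is maximal. For this I would invoke the averaging–plus–elliptic-regularity argument flagged for Part 1: the averaging operator sends polynomials to basic smooth functions, and a bootstrapping argument (as in \cite{LR}) upgrades smooth basic functions to polynomial basic functions separating fibers of $\sigma$. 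Once basic polynomials separate fibers, maximality is automatic: any polynomial $f \in \RR[V]$ that is constant on $\sim_{\mathcal{B}(\sigma)}$-classes is constant on fibers of $\sigma$, hence is expressible as a polynomial in a finite set of basic polynomials (using Tarski–Seidenberg / Hilbert finiteness type arguments to pass from fiber-constancy to membership), and thus belongs to $\mathcal{B}(\sigma)$.

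For (ii) I would simply cite Theorem \ref{T:sms construction}: given a maximal Laplacian algebra $A$, the construction there produces a genuine spherical manifold submetry $\hat{\sigma}_A : \sphere(V) \to \hat{X}$ with $\mathcal{B}(\hat{\sigma}_A) = A$, so in particular $\hat{X}$ is canonically identified with $X_A$ as a metric space and the quotient map $\mathcal{L}(A) = \sigma_A$ acquires the structure of a smooth submetry. For (iii), the composite $\mathcal{B} \circ \mathcal{L}(A) = A$ follows directly from the two observations just made: basic polynomials of $\sigma_A$ are (generated by homogeneous polynomials) constant on $\sim_A$-classes, and maximality forces these to coincide with $A$. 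Conversely, $\mathcal{L} \circ \mathcal{B}(\sigma) \sim \sigma$ holds because basic polynomials separate fibers of $\sigma$ (by the work done in (i)), so $\sim_{\mathcal{B}(\sigma)}$ is exactly the fiber partition of $\sigma$, and the quotient $X_{\mathcal{B}(\sigma)}$ is canonically isometric to the target $X$. Finally, functoriality on morphisms is formal: a morphism $f : X_1 \to X_2$ in $\Subm$ with $f \circ \sigma_1 = \sigma_2$ gives $\mathcal{B}(\sigma_2) \subseteq \mathcal{B}(\sigma_1)$ since a polynomial constant on the coarser partition is constant on the finer, and conversely an inclusion $A_1 \subseteq A_2$ makes $\sim_{A_2}$ refine $\sim_{A_1}$, so the identity on $\sphere(V)$ descends to a well-defined map $X_{A_2} \to X_{A_1}$.

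The principal obstacle is the content of piece (i), and within it the separation-of-fibers statement. A priori one has only that basic \emph{smooth} functions separate fibers, and one must promote this to polynomials; this is where the averaging operator and transverse Jacobi field analysis do the real work, together with the bootstrapping via elliptic regularity. The closure under $\Delta$ at the level of polynomials (as opposed to smooth functions) is another subtle point, since one must know the averaging operator preserves polynomiality and interacts well with the $\mathfrak{sl}(2,\RR)$-action mentioned in Remark \ref{R:Segal-Shale-Weil}. Piece (ii), while computationally deep, is black-boxed here as Theorem \ref{T:sms construction}; and once both pieces are in place the equivalence of categories assembles in a few lines.
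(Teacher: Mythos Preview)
Your decomposition into (i)--(iii) plus functoriality is exactly how the paper organizes the proof: piece (i) together with the identity $\mathcal{L}\circ\mathcal{B}(\sigma)\sim\sigma$ is precisely Theorem~\ref{T:subm-to-poly}, piece (ii) together with $\mathcal{B}\circ\mathcal{L}(A)=A$ is Theorem~\ref{T:sms construction}(b), and the final glue (your paragraph on (iii) and functoriality) matches the short proof of Theorem~\ref{MT:1-1correspondence} at the end of Section~\ref{S:algebras-to-smss}.

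The one place where you diverge from the paper, and where your argument is weaker, is the maximality step in (i). You write that a polynomial $f$ constant on the $\sim_{\mathcal{B}(\sigma)}$-classes ``is expressible as a polynomial in a finite set of basic polynomials (using Tarski--Seidenberg / Hilbert finiteness type arguments to pass from fiber-constancy to membership)''. Neither Tarski--Seidenberg nor Hilbert's finiteness theorem yields that implication; what you are describing is a Schwarz-type statement, and it is both stronger than needed and not available by those tools. The paper's argument (in the proof of Theorem~\ref{T:subm-to-poly}) is much simpler: since $\mathcal{B}(\sigma)$ is by definition generated by the \emph{homogeneous} polynomials constant on fibers, any homogeneous $P\notin\mathcal{B}(\sigma)$ is automatically non-constant on some fiber, which is exactly what maximality demands. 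So drop the appeal to real-algebraic machinery and argue directly from the definition of $\mathcal{B}(\sigma)$; the rest of your outline is correct and matches the paper.
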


Recalling that a manifold submetry  with connected fibers defines a partition into fibers which is a transnormal system, we can restate Theorem \ref{MT:connected-vs-integral} as follows

\begin{theorem}[Theorem \ref{MT:connected-vs-integral}]
For any Euclidean vector space $V$, the maps $\mathcal{B}$, $\mathcal{L}$ above define a bijection
\[
\begin{tikzcd}[ampersand replacement=\&]
\left\{\begin{array}{c}\textrm{transnormal systems}\\ \textrm{with closed leaves}\\ \textrm{in }\sphere(V)\end{array} \right\} \arrow[bend right]{r}{\mathcal{B}} \& 
\left\{\begin{array}{c}\textrm{Maximal Laplacian}\\\textrm{algebras }A\subseteq \RR[V]\\ \textrm{integrally closed in }\RR[V]\end{array} \right\}\arrow[bend right]{l}{\mathcal{L}}
\end{tikzcd}
\]
\end{theorem}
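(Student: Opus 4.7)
The plan is to use Theorem \ref{MT:1-1correspondence} to identify a \sms $\sigma: \sphere(V) \to X$ with its maximal Laplacian algebra $A := \mathcal{B}(\sigma)$; under this identification, the $\sigma$-fibers coincide with the $\sim_A$-classes. The statement then reduces to proving that $\sigma$ has connected fibers if and only if $A$ is integrally closed in $\RR[V]$.

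For the forward direction, I will take $f \in \RR[V]$ integral over $A$, satisfying $f^n + a_{n-1}f^{n-1} + \cdots + a_0 = 0$ with $a_i \in A$. Restricting to a single fiber $F$, each $a_i$ is constant on $F$, so $f|_F$ lands in the finite root set of a fixed polynomial; continuity of $f$ and connectedness of $F$ then force $f|_F$ to be constant. Thus $f$ is constant on every $\sim_A$-class, and the maximality of $A$ gives $f \in A$.

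For the backward direction, I will invoke Theorem \ref{MT:disconnected} to factor $\sigma$ as $\pi \circ \sigma_c$, where $\sigma_c: \sphere(V) \to X_c$ is a \sms with connected fibers and $\pi: X_c \to X$ is the quotient by the isometric action of a finite group $G$. Setting $A_c := \mathcal{B}(\sigma_c)$, the inclusion $A \subseteq A_c$ is immediate from the factorization $\sigma = \pi \circ \sigma_c$. The core step will be to transport the $G$-action on $X_c$ into a $G$-action on $A_c$ by algebra automorphisms with fixed subalgebra $A = A_c^G$; granted this, every $f \in A_c$ is a root of the monic polynomial $\prod_{g \in G}(T - g \cdot f) \in A[T]$, so $A_c$ is integral over $A$. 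Integral closedness of $A$ in $\RR[V]$ then forces $A_c = A$, and Theorem \ref{MT:1-1correspondence} identifies $\sigma$ with $\sigma_c$, producing the desired connectedness of fibers.

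I expect the main obstacle to be establishing the $G$-action on $A_c$ and the identity $A = A_c^G$, since the isometric $G$-action on $X_c$ does not obviously lift to an orthogonal action on $V$. My plan here is to exploit the construction in the proof of Theorem \ref{MT:disconnected}, where $G$ arises as the deck group of a Galois orbifold covering on the regular part $X_c^{\reg} \to X^{\reg}$, in order to produce a $G$-action on $\sphere(V)^{\reg}$ lifting the one on $X_c^{\reg}$; then continuity together with the maximality of $A_c$ should allow extending the induced action on polynomial functions from the regular part to all of $\sphere(V)$, verifying both polynomiality of $g \cdot f$ for $f \in A_c$ and the identification $A_c^G = A$.
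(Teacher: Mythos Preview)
Your overall architecture matches the paper's: the forward direction (connected fibers $\Rightarrow$ integrally closed) is exactly as in the paper, and for the converse you correctly reach for Theorem~\ref{MT:disconnected}, the finite group $G$, and the averaging polynomial $\prod_{g\in G}(T-g\cdot f)$ to show $A_c$ is integral over $A$. This is precisely the content of the paper's Proposition following Lemma~\ref{L:Group-action-on-rings}.

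Where you go off course is the step you yourself flag as the main obstacle. Your plan to \emph{lift the $G$-action from $X_c^{\reg}$ to $\sphere(V)^{\reg}$} does not work. The paper's own Remark after Theorem~\ref{MT:disconnected} gives an explicit cohomogeneity-one example in which the $\Z_2$-action on $X_c$ cannot lift to an isometry of $\sphere(V)$; since in that example the two singular orbits are not even diffeomorphic, no diffeomorphism of the regular part can extend across them, and there is no reason a lift on $\sphere(V)^{\reg}$ should exist at all. Even if some lift did exist, it would be a mere diffeomorphism, and you would still have to explain why pulling back a polynomial through it yields a polynomial --- continuity and maximality of $A_c$ do not give you this.

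The paper avoids lifting entirely. In Lemma~\ref{L:Group-action-on-rings} the action of $g\in G$ on $A_c$ is defined by pulling back \emph{functions on $X_c$}: for $f\in A_c$ write $f=\bar f\circ K\sigma_c$ and set $g\cdot f:=(\bar f\circ Kg)\circ K\sigma_c$. The nontrivial point is that $g\cdot f$ is again a polynomial; this is established by observing that $Kg:KX_c\to KX_c$ preserves the codimension of fibers, invoking \cite[Theorem~1.1]{AR} to conclude that $Kg^*$ sends smooth $\sigma_c$-basic functions to smooth $\sigma_c$-basic functions, and finally using that $Kg$ commutes with rescalings so that homogeneous polynomials go to homogeneous polynomials. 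With this $G$-action on $A_c$ in hand, the identity $A_c^G=A$ is immediate from $X=X_c/G$, and your averaging argument goes through verbatim.
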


\part{From manifold submetries to Laplacian algebras}

The first part, comprising Sections \ref{S:qgeods} and \ref{S:spherical-ms}, aims to show that given a manifold submetry $\p:\sphere(V)\to X$, the algebra of basic polynomials is a maximal Laplacian algebra. Combining recent results in the theory of singular Riemannian foliations (cf. \cite{AR, LR}), one can prove this result in the case where $\p$ is the quotient map $\sphere(V)\to \sphere(V)/\F$ for some singular Riemannian foliation $(\sphere(V), \F)$. Here the strategy is to extend such results to the more general case of manifold submetries. Some results, such as the Homothetic Transformation Lemma, extend with minimal changes from the original version for singular Riemannian foliations: all such results have been added in Appendix \ref{A:manif-subm}. Other results, such as equifocality, require an original approach, and these form the bulk of the next two sections.

\section{Spherical Alexandrov spaces, quotient geodesics, and submetries}\label{S:qgeods}

\subsection{Alexandrov spaces}

Alexandrov spaces are a certain class of metric spaces $(X,d)$ with a lower curvature bound. We will assume that the reader is familiar with this concept, and we refer to \cite{BGP} for an introduction to the subject.

Given an Alexandrov space $(X,d)$, a point $x\in X$, and a sequence of positive real numbers $r_i$ converging to zero, the sequence of rescaled pointed metric spaces $(X,x, r_i\cdot d)$ converges in the Gromov-Hausdorff sense to a pointed metric space of non-negative curvature $(T_xX,o)$ called the \emph{tangent space to $X$ at $x$} and its elements are called \emph{tangent vectors}, even though this space is in general not a vector space. For a vector $v\in T_xX$, one defines the norm $|v|=d(o,v)$. The subset of $T_xX$ of vectors with norm 1 is again an Alexandrov space called the \emph{space of directions} $\Sigma_xX$, and $T_xX$ is in fact the metric cone over $\Sigma_xX$.

Recall that a geodesic in a metric space $X$ is a curve $\gamma:[a,b]\to X$ parametrized by arc length, minimizing the distance between the end points. If $X$ is an Alexandrov space, then for every $t_0\in [a,b]$
\todo{(2): Corrected $t_0$ typo}
one defines the forward velocity $\gamma^+(t_0)\in T_{\gamma(t_0)}X$. Almost every unit-norm vector is the velocity of a geodesic. Furthermore, if two geodesics have the same initial velocity, then they coincide for as long as they are both defined.

\subsection{Infinitesimal submetry}\label{SS:inf-subm}

Let $\p:M\to X$ be a manifold submetry, and $p\in M$. From \cite{Lyt}, the sequence of rescalings $\p_r: (M, rg, p)\to (X, rd_X, \p(p))$, as $r\to 0$, converges to a submetry $d_p\sigma: T_pM\to T_{\p(p)}X$, called the \emph{differential of $\p$ at $p$}, such that for every $r\in \RR_+$, $d_p\p(r\cdot v)=r\cdot  d_p\p(v)$. Moreover, letting $V_p=T_pL_p$ ($L_p$ the $\p$-fiber through $p$) and $\Hor_p=V_p^\perp$, the restriction $\p_p:=d_p\p|_{\Hor_p}:\Hor_p\to T_{\p(p)}X$ is again a submetry, whose fibers are (the blow up of) the intersections between the fibers of $\p$ and the slice $D_p:=\exp \nu^{<\epsilon}_pP$. By Lemma \ref{L:transverse-int} in the Appendix, these are manifolds, and thus $\p_p$ is a manifold submetry.

By construction, the preimage of the vertex in  ${T_{\p(p)}X}$ is simply the vertex in $\Hor_p$, and since $\p_p$ is a submetry it follows that all the $\p_p$-fibers are contained in the distance spheres of $\Hor_p$ around the origin. Denoting $\sphere_p$ the unit sphere in $\Hor_p$ and $\Sigma_{\p(p)}X$ the space of directions of $X$ at $\p(p)$, the map $\p_p$ then restricts to a manifold submetry $\sphere_p\to \Sigma_{\p(p)}X$, which we call the \emph{infinitesimal submetry of $\p$ at $p$} and still denote by $\p_p$.

\subsection{Horizontal geodesics}


Given a manifold submetry $\p:M\to X$ and $p\in M$, a tangent vector $v\in T_pM$ is called \emph{horizontal} if it is perpendicular to $T_p\p^{-1}(\p(p))$. A geodesic $\gamma:[a,b]\to M$ is a \emph{horizontal geodesic}, if $\gamma'(t)$ is horizontal for every $t\in [a,b]$. It is known (cf. \cite{Lyt}, Lemma 5.4) that for every vector $w\in \Sigma_{x}X$, every point $p\in \p^{-1}(x)$, and every horizontal vector $v\in d_p\sigma^{-1}(w)$, the geodesic  $\gamma(t):=\exp_p(tv)$ is a horizontal geodesic. Furthermore, the projection of a horizontal geodesic is concatenation of geodesics on $X$.

\subsection{Spherical Alexandrov spaces}
One fundamental property of singular Riemannian foliation is the so-called equifocality, which states that if two horizontal geodesics $\gamma_1, \gamma_2:(a,b)\to M$ are so that $\gamma_1(t)$ and $\gamma_2(t)$ belong to the same leaf for every $t$ in some open set $(a',b')\subseteq (a,b)$, then in fact $\gamma_1(t)$ and $\gamma_2(t)$ belong to the same leaf for every $t\in (a,b)$. This property was proved for singular Riemannian foliations in \cite{LT} and \cite{AT}, in both cases using the existence of smooth vector fields spanning the leaves.

In this section we prove equifocality for manifold submetries, and to do so we prove that the Alexandrov spaces which occur as bases of manifold submetries have very special properties which allow to define geodesics even after they stop minimizing.

We begin by defining some special classes of Alexandrov spaces. These definitions are by induction on the dimension. Let $\B_1$ be the class of closed 1-dimensional Alexandrov spaces, namely circles $S^1$, closed intervals $[a,b]$, the real line $\RR$ and the half line $[0,\infty)$.
Given $X\in \B_1$, a \emph{quotient geodesic} on $X$ is a $1$-Lipschitz map $\gamma:[0,\ell]\to X$, with a partition $0\leq t_1<t_2<\ldots <t_N\leq \ell$, such that
\begin{enumerate}
\item Each restriction $\gamma|_{[t_i,t_{i+1}]}$ is a locally minimizing geodesic.
\item For every $i=1,\ldots N$, $\gamma(t_i)$ is in the boundary of $X$.
\end{enumerate}
In other words, quotient geodesics are ``geodesics which bounce back and forth''.

Finally, $X\in \B_1$ is called a \emph{spherical Alexandrov space} if it admits an involutive isometry $a:X\to X$ such that, for every $x\in X$ and $v\in \Sigma_xX$, the quotient geodesic $\gamma(t)$ with $\gamma(0)=x$, $\gamma'(0)=v$ satisfies $\gamma(\pi)=a(x)$. Let $\SA_1$ be the set of spherical Alexandrov spaces. It is easy to see that $\SA_1$ consists of intervals $[0,\pi/k]$ and circles $S^1$ of length $2\pi/k$, for $k$ positive integer.

Assume the classes $\B_j$, $\SA_j$ of $j$-dimensional Alexandrov spaces have been defined, for $j=1,\ldots m-1$.
\begin{definition}
Let $X$ be an $m$-dimensional Alexandrov space. Then:
\begin{itemize}
\item We say that $X$ is \emph{base-like} if every tangent vector exponentiates to a geodesic, and for every $x\in X$, $\Sigma_xX\in \SA_{m-1}$.
\item We denote $\B_m$ the set of base-like Alexandrov spaces of dimension $m$.
\item Given $X\in \B_m$, then fixing  $x\in X$ and $v\in \Sigma_xX$, we define a \emph{quotient geodesic} from $(x,v)$ as a 1-Lipschitz map $\gamma:[0,\ell]\to X$ with a partition $0\leq t_1<t_2<\ldots <t_N\leq \ell$, such that
\begin{enumerate}
\item Each restriction $\gamma|_{[t_i,t_{i+1}]}$ is a locally minimizing geodesic.
\item For every $i=1,\ldots N$, $\gamma^+(t_i)=a(\gamma^-(t_i))$, where $\gamma^{\pm}(t_i)\in \Sigma_{\gamma(t_i)}X$ are the left and right limit of $\gamma$ at $t_i$ and $a: \Sigma_{\gamma(t_i)}X\to  \Sigma_{\gamma(t_i)}X$ is the involutive isometry which exists since $\Sigma_{\gamma(t_i)}X\in \SA_{m-1}$.
\end{enumerate}
\item Given $X\in \B_m$, we say that $X$ is a \emph{spherical Alexandrov space} if it admits an involutive isometry $a:X\to X$ (called \emph{antipodal map}) such that, for every $x\in X$ and $v\in \Sigma_xX$, the quotient geodesic $\gamma(t)$ from $(x,v)$ satisfies $\gamma(\pi)=a(x)$ (independent of $v$).
\item Define $\SA_m$ the set of $m$-dimensional, spherical Alexandrov spaces.
\end{itemize}
\end{definition}
\todo{(8): added the remark}
\begin{remark}
By induction, it is easy to see that the antipodal map $a$ for a spherical Alexandrov space is unique.
\end{remark}
Then we have the following:

\begin{lemma}[Uniqueness of quotient geodesics]\label{L:Qgeod-uniq.}
Given a base-like Alexandrov space $X$, and two quotient geodesics $\gamma_i:[0, \ell_i]\to X$, $i=1,2$ with $\gamma_1(0)=\gamma_2(0)$ and $\gamma_1^+(0)=\gamma_2^+(0)$, then $\gamma_1(t)=\gamma_2(t)$ for any $t\in [0,\min\{\ell_1,\ell_2\}]$.
\end{lemma}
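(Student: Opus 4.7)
The plan is to argue by induction on $m=\dim X$, using the standard ``supremum of agreement'' technique. The base case $m=1$ is elementary: a quotient geodesic in $X\in\B_1$ is a unit-speed curve that bounces whenever it hits the boundary, hence is determined by its initial position and signed initial velocity. For the inductive step I assume the lemma together with the basic structural facts about $\SA_j$ for $j<m$ (notably uniqueness of the antipodal map, as noted in the remark just preceding the lemma).

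Given $\gamma_1,\gamma_2$ as in the statement, set
\[
T:=\sup\bigl\{s\in[0,\min(\ell_1,\ell_2)]\ :\ \gamma_1|_{[0,s]}=\gamma_2|_{[0,s]}\bigr\}.
\]
Near $t=0$ both $\gamma_i$ are locally minimizing geodesics starting at the same point with the same initial velocity $\gamma_i^+(0)$, so the uniqueness of geodesics in Alexandrov spaces with prescribed initial velocity, recalled above, gives $T>0$. The task is then to show $T=\min(\ell_1,\ell_2)$. If not, then by continuity $\gamma_1(T)=\gamma_2(T)=:x$ and the left velocities agree: $v:=\gamma_1^-(T)=\gamma_2^-(T)\in\Sigma_xX$.

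The heart of the argument is the claim that $\gamma_i^+(T)=a(v)$ for both $i$, where $a$ is the unique antipodal map on $\Sigma_xX\in\SA_{m-1}$. If $T$ is a partition point of $\gamma_i$ this is exactly condition~(2) of the definition. If $T$ is not a partition point of $\gamma_i$, then $\gamma_i$ is a locally minimizing geodesic in a neighborhood of $T$; blowing up at $x$ produces a locally minimizing geodesic through the apex of the tangent cone $\Cone(\Sigma_xX)$, and such a ``line through the apex'' is possible only when its incoming and outgoing rays correspond to a pair of directions in $\Sigma_xX$ joined by a quotient geodesic of length $\pi$. The inductive hypothesis identifies this pair uniquely as $(v,a(v))$, again giving $\gamma_i^+(T)=a(v)$. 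Hence both $\gamma_i|_{[T,T+\varepsilon]}$ are locally minimizing geodesics starting at $x$ with the same initial velocity $a(v)$, so they coincide on a right-neighborhood of $T$, contradicting the maximality of $T$.

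The main obstacle is the non-partition case of the key claim: showing that for a locally minimizing geodesic through a point $x$ in a base-like Alexandrov space, the forward velocity out of $x$ is \emph{forced} to be the spherical antipode of the backward velocity. This reduction from a metric statement about $X$ to one about the spherical structure on $\Sigma_xX$ is where the base-like hypothesis, combined with the inductive uniqueness of the antipodal map in $\SA_{m-1}$, does the essential work.
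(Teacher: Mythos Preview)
Your argument is correct and follows the same open--closed (equivalently, supremum-of-agreement) strategy as the paper. The paper's proof simply writes $\gamma_i^+(t_0)=a(\gamma_i^-(t_0))$ at every $t_0$, whereas you carefully separate the case where $t_0$ is not a partition point and justify this identity via the tangent-cone argument; this extra care fills in a step the paper leaves implicit.

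Two minor remarks. First, the induction on $m=\dim X$ is not actually needed for the lemma itself: what you use in the non-partition case is only that for a locally minimizing geodesic the incoming and outgoing directions are at distance $\pi$ in $\Sigma_xX$, and that in $\Sigma_xX\in\SA_{m-1}$ any (quotient) geodesic of length $\pi$ from $v$ ends at $a(v)$ --- this is the \emph{definition} of $\SA_{m-1}$ together with the uniqueness of $a$ already recorded in the Remark, not the inductive hypothesis of the lemma. Second, your phrasing ``joined by a quotient geodesic of length $\pi$'' is fine but the cleanest version of the cone fact is that a line through the apex forces $d_{\Sigma_xX}(\gamma^-,\gamma^+)=\pi$; since a minimizing geodesic of length $\pi$ is in particular a quotient geodesic, the defining property of $\SA_{m-1}$ then gives $\gamma^+=a(\gamma^-)$ directly.
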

\begin{proof}
Let $\ell=\min\{\ell_1,\ell_2\}$. It is enough to prove that the set $J=\{t\in [0,\ell]\mid \gamma_1(t)=\gamma_2(t)\}$ is open and closed. Since $J$ is clearly closed, it is enough to show that it is open. Suppose then that $[0,t_0]\subset J$. If $t_0>0$, then $\gamma_1^-(t_0)=\gamma_2^-(t_0)$, therefore $\gamma_1^+(t_0)=a(\gamma_1^-(t_0))=a(\gamma_2^-(t_0))=\gamma_2^+(t_0)$. If $t_0=0$, then $\gamma_1^+(t_0)=\gamma_2^+(t_0)$ by assumption.

In either case, there is a $\delta>0$ small enough that $\gamma_1|_{[t_0,t_0+\delta)}$ and $\gamma_2|_{[t_0,t_0+\delta)}$ are geodesics with the same initial direction, and therefore they are equal. Thus $[0,t_0+\delta)\subset J$ and $J$ is open.
\end{proof}

\todo{(9): corrected typos}
\begin{remark}
The notion of quotient geodesics, and their properties, have also been discussed and proved in \cite{LT} in the context of singular Riemannian foliations, see Definition \ref{D:srf} and the discussion below it.
\end{remark}

\begin{proposition}\label{P:facts}
Let $M$ be a complete Riemannian manifold, and $\p:M\to X$ a manifold submetry onto an $m$-dimensional Alexandrov space. Then:
\begin{enumerate}
\item $X\in \B_m$.
\item A horizontal geodesic in $M$ projects to a quotient geodesic in $X$.
\item If $M=\sphere^n$ is the unit sphere of curvature 1, then $X\in \SA_m$ and the antipodal map is $a(\p(v))=\p(-v)$.
\item For any two points $p_1, p_2\in M$ with $\p(p_1)=\p(p_2)=p_*$, and vectors $v_i\in \sphere_{p_i}$ with $\p_{p_1}(v_1)=\p_{p_2}(v_2)$, the geodesics $\gamma_1(t)=\exp(tv_1)$ and $\gamma_2(t)=\exp(tv_2)$ satisfy $\p(\gamma_1(t))=\p(\gamma_2(t))$ for all $t$.
\end{enumerate}
\end{proposition}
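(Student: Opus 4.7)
The plan is to prove all four assertions simultaneously by induction on $m = \dim X$, with the inductive hypothesis entering through the infinitesimal submetry $\p_p\colon \sphere_p \to \Sigma_{\p(p)} X$, whose base has dimension $m-1$. Within a single dimension, the logical order I will follow is (1), (2), (4), (3). The base case $m=1$ is handled by the classification of complete $1$-dimensional Alexandrov spaces ($S^1$, $[a,b]$, $\RR$, $[0,\infty)$) together with direct verification of the bouncing condition at boundary points. For the inductive step and assertion (1), the infinitesimal submetry $\p_p$ is a manifold submetry from a unit sphere onto an $(m-1)$-dimensional Alexandrov space, so the inductive hypothesis (3) yields $\Sigma_{\p(p)} X \in \SA_{m-1}$ with antipodal map characterized by $a(\p_p(v)) = \p_p(-v)$. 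That every tangent vector exponentiates I obtain by lifting: given $v \in T_x X$, pick a horizontal lift $\tilde v \in \Hor_p$, form $\exp_p(t\tilde v)$ in $M$, and project; local minimality of the resulting curve will be a byproduct of (2).

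For (2), given a horizontal geodesic $\gamma\colon[0,\ell]\to M$, the projection $\bar\gamma = \p\circ\gamma$ is $1$-Lipschitz and, by the submetry results of \cite{Lyt}, decomposes as a concatenation of locally minimizing geodesics along a partition $0 \le t_1 < \dots < t_N \le \ell$. At each $t_i$, set $p = \gamma(t_i)$ and $w = \gamma'(t_i)$; unravelling the definitions of Section \ref{SS:inf-subm} gives $\bar\gamma^+(t_i) = \p_p(w)$ and $\bar\gamma^-(t_i) = \p_p(-w)$ in $\Sigma_{\p(p)} X$, and the antipodal-map formula from inductive hypothesis (3) applied to $\p_p$ then yields $\bar\gamma^+(t_i) = a(\bar\gamma^-(t_i))$. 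Thus $\bar\gamma$ is a quotient geodesic. Assertion (4) is then an immediate corollary: the two horizontal geodesics $\gamma_i(t) = \exp_{p_i}(t v_i)$ project via (2) to quotient geodesics starting at $p_*$ with common initial direction $\p_{p_1}(v_1) = \p_{p_2}(v_2)$, and Lemma \ref{L:Qgeod-uniq.} forces them to coincide.

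For (3), with $M = \sphere^n$, define $a\colon X \to X$ by $a(\p(v)) = \p(-v)$. Well-definedness is exactly where (4) is used: given $v_1, v_2$ in a common fiber, pick unit horizontal $w_i \in \sphere_{v_i}$ whose infinitesimal projections $\p_{v_i}(w_i)$ agree (possible because both $\p_{v_i}$ have the common target $\Sigma_{\p(v_i)} X$). The great circles $\gamma_i(t) = \cos(t) v_i + \sin(t) w_i$ are horizontal geodesics on $\sphere^n$, and (4) gives $\p(-v_1) = \p(\gamma_1(\pi)) = \p(\gamma_2(\pi)) = \p(-v_2)$. Since the antipodal map on $\sphere^n$ is a fiber-preserving isometry, $a$ descends to an involutive isometry of $X$, and the spherical condition $\gamma(\pi) = a(x)$ is obtained by realising any quotient geodesic from $(x,v)$ as the projection under $\p$ of such a great circle and appealing to Lemma \ref{L:Qgeod-uniq.}.

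I expect the main obstacle to lie in assertion (2), specifically in confirming that the breakpoints of $\bar\gamma$ form a finite set on each compact interval and that the infinitesimal identification $\bar\gamma^{\pm}(t_i) = \p_p(\pm w)$ holds unconditionally; this will require a careful appeal to the transversality of horizontal geodesics with respect to the stratification of $X$ (via the slice theorem in the Appendix) together with the metric results of \cite{Lyt}. The remaining steps, notably (4) and the well-definedness argument for $a$ in (3), are essentially formal once (1) and (2) are in place.
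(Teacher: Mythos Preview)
Your approach is essentially the paper's: a simultaneous induction that feeds through the infinitesimal submetry $\p_p:\sphere_p\to\Sigma_{\p(p)}X$, using part (3) one dimension down to obtain the antipodal relation $a(\p_p(v))=\p_p(-v)$ needed in (2), and then deducing (4) from uniqueness of quotient geodesics and (3) from (4). Two minor remarks. First, the paper inducts on $n=\dim M$ rather than on $m=\dim X$; this makes the base case concrete ($M=\RR$ or $\sphere^1$), whereas your base case $m=1$ still allows arbitrary $M$ and so needs a small extra argument (the submetry $\p_p:\sphere_p\to\Sigma_xX$ onto a two-point space forces $\sphere_p=\sphere^0$, hence $\p_p(w)\neq\p_p(-w)$ at interior points). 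Second, the obstacle you anticipate in (2) dissolves without stratification or transversality: local distance-minimality of $\gamma$ from each fiber (the paper's observation (a), equivalently the result you cite from \cite{Lyt}) already gives a piecewise-geodesic projection with discrete breakpoints, and the identification $\bar\gamma^{\pm}(t_i)=\p_p(\pm\gamma'(t_i))$ is immediate from the definition of $d_p\p$.
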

\begin{proof} We prove it by induction on the dimension $n$ of $M$. If $n=1$, then $M$ is either $\RR$ or $\sphere^1$ and the only nontrivial case is for $X$ to have dimension 1 as well. In this case, $X\in \B_1$ trivially, and it is easy to see that $\p:M\to X$ is a local isometry away from $\p^{-1}(\partial X)$, and in fact a quotient geodesic. In particular, if $M$ is the unit circle $\sphere^1$, one defines $a: X\to X$ by $a(\p(p))=\p(-p)$, and it is easy to see that $X\in \SA_1$.

Suppose now that the result holds for any manifold submetry $N\to Y$ with $\dim N\leq n-1$, and take $\p:M\to X$ with $\dim M=n$, and let $m=\dim X$. We make two observations:
\todo{(10a): changed $a,b$ into $t_1, t_2$ throughout the proof.}
\begin{enumerate}[a.]
\item Fixing a horizontal geodesic $\gamma:[0,\ell]\to M$, the projected curve $\gamma_*(t)=\p(\gamma(t))$ satisfies $\textrm{length}(\gamma_*|_{[t_1,t_2]})=\textrm{length}(\gamma|_{[t_1,t_2]})$, for any $[t_1,t_2]\subseteq [0,L]$. In particular, for any $t_1\in [0,\ell]$ there is some $t_2>t_1$ such that $\gamma|_{[t_1,t_2]}$ minimizes the distance between the fibers at $\gamma(t_1)$ and $\gamma(t_2)$, and therefore
\[
\textrm{length}(\gamma_*|_{[t_1,t_2]})=\textrm{length}(\gamma|_{[t_1,t_2]})=d_{M}(L_{\gamma(t_1)}, L_{\gamma(t_2)})=d_X(\gamma_*(t_1),\gamma_*(t_2)),
\]
which implies that $\gamma_*|_{[t_1,t_2]}$ is a geodesic.
\item For any point $p_*\in X$, and $p\in\p^{-1}(p_*)$, the differential of $\sigma$ at $p$ defines a  manifold submetry $\p_p:\sphere_p\to \Sigma_{p_*}X$ where $\sphere_p$ is the unit sphere in $\nu_p(\p^{-1}(p_*))$ (cf. \ref{SS:inf-subm}). Since $\dim \sphere_p<n$, it follows by induction that $\Sigma_{p_*}X\in \SA_{m-1}$.
\end{enumerate}
\todo{(10b): ``lemma'' changed into ``proposition''}
We proceed to prove the proposition:

1) For any $p_*\in X$, $p\in \p^{-1}(p)$ and $v_*\in \Sigma_{p_*}X$, one can find a $v\in \sphere_p$ such that $\p_p(v)=v_*$. Since the fibers of $\p$ are closed, there is a constant $\epsilon$ such that $\gamma(t)=\exp tv$ satisfies
\[
d(L_p, L_{\gamma(t)})=d(p,L_{\gamma(t)})=\textrm{length}(\gamma|_{[0,t]})\qquad \forall t\in(0,\epsilon).
\]
By point a. it follows that $v_*$ exponentiates to a geodesic $\gamma_*(t):=\p(\gamma(t))$ for $t<\epsilon$. Together with point b., it follows that $X\in \B_{m}$.

2) By observation a., any horizontal geodesic $\gamma$ in $M$ is projected to a curve $\gamma_*$ which is a piecewise geodesic. Thus it remains to prove that $\gamma_*^+(t)=a(\gamma_*^-(t))$ for every $t$. Fix a point $p=\gamma(t_0)$, and let $v=\gamma'(t_0)$. Then $\gamma_*^+(t_0)=\p_p(v)$, and $\gamma_*^-(t_0)=\p_p(-v)$, thus we need to prove that the antipodal map $a:\Sigma_pX\to \Sigma_pX$ satisfies $a(\p_p(v))=\p_p(-v)$. For any $w_*\in \Sigma_v\left(\Sigma_pX\right)$, the quotient geodesic $\psi:[0,\pi]\to \Sigma_pX$ with $\psi^+(0)=w_*$ is, by the induction step, given by $\p_p(\cos(t) v+\sin(t)w)$, where $w\in T_{v}\sphere_p\simeq \langle v \rangle^{\perp}$ is the vector projecting to $w_*$. Therefore,
\[
a(\p_p(v))=\psi(\pi)=\p_p(-v).
\]

3) Let $p_*=\p(p)\in X$, $v_*\in \Sigma_pX$ and let $v\in \sphere_p$ be such that $\p_p(v)=v_*$. By point (2) the quotient geodesic $\gamma_*$ with $\gamma^+(0)=v_*$ is $\gamma_*(t)=\p(\exp(tv))$. Since in this case $M$ is a round sphere of curvature 1, $\exp(tv)=\cos(t)p+\sin(t)v$ and $\gamma_*(t)=\p(\exp(\pi v))=\p(-p)$, independently of $v_*$.

4) By point (2), $(\gamma_1)_*(t)=\p(\gamma_1(t))$ and $(\gamma_2)_*(t)=\p(\gamma_2(t))$ are both quotient geodesics, and by hypothesis $(\gamma_1)_*^+(0)=(\gamma_2)_*^+(0)$. Since there is no 
\todo{(10c): ``splitting'' to ``branching''}
geodesic branching in Alexandrov spaces, it also follows that quotient geodesics are uniquely determined by their initial vector, and therefore $(\gamma_1)_*(t)=(\gamma_2)_*(t)$ for all $t$.
\end{proof}

\begin{proposition}\label{P:Jacfields}
Let $\p:M\to X$ be a $C^2$-manifold submetry, $\gamma:[0,\ell]\to M$ a horizontal geodesic, and $L_t$ the fiber through $\gamma(t)$. Then there is a vector space $W$ of Jacobi fields along $\gamma$, such that:
\begin{itemize}
\item $W$ is \emph{isotropic}, i.e. for any $J_1, J_2\in W$, $\langle J_1(t),J'_2(t)\rangle=\langle J_1'(t),J_2(t)\rangle$.
\item For any $t\in [0,\ell]$,
\[
T_{\gamma(t)}L_t=W(t):=\{J(t)\mid J\in W\}
\]
\end{itemize}
\end{proposition}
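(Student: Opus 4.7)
The plan is to realize $W$ as the space of variation fields of $\gamma$ through families of horizontal geodesics that all lift the same quotient geodesic $\p\circ\gamma$. By Proposition \ref{P:facts}(4) such variations stay inside the fibers $\{L_t\}$, so the variation fields automatically take values in $T_{\gamma(t)}L_t$; the two bullets then reduce to a dimension count and a symplectic symmetry check.

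Fix $t_0\in(0,\ell)$ with $\dim L_{t_0}=k:=\max_s\dim L_s$ (this set is open and dense by lower semicontinuity of the fiber dimension). Write $p_0=\gamma(t_0)$ and $w_0=\gamma'(t_0)$. For each $u\in T_{p_0}L_{t_0}$, pick a smooth curve $\alpha\colon(-\epsilon,\epsilon)\to L_{t_0}$ with $\alpha(0)=p_0$, $\alpha'(0)=u$, together with a smooth horizontal vector field $v(s)\in\Hor_{\alpha(s)}$ along $\alpha$ satisfying $v(0)=w_0$ and $\p_{\alpha(s)}(v(s))\equiv \p_{p_0}(w_0)$; existence of such a smooth lift follows from the infinitesimal submetry picture developed in Section \ref{SS:inf-subm}, where the preimages of $\p_{p_0}$ on the slice are smooth submanifolds transverse to $L_{t_0}$. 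Setting $\gamma_s(t):=\exp_{\alpha(s)}((t-t_0)v(s))$, each $\gamma_s$ is horizontal with $\p\circ\gamma_s=\p\circ\gamma$ by Proposition \ref{P:facts}(4), so $\gamma_s(t)\in L_t$, and $J_u(t):=\partial_s\gamma_s(t)|_{s=0}$ is a Jacobi field with $J_u(t)\in T_{\gamma(t)}L_t$ and $J_u(t_0)=u$. Thus $W:=\{J_u:u\in T_{p_0}L_{t_0}\}$ is a $k$-dimensional space of Jacobi fields with $W(t_0)=T_{p_0}L_{t_0}$.

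Isotropy is checked at $t_0$ and propagates to every $t$ since the Wronskian of Jacobi fields is conserved along $\gamma$. Differentiating the variation, $J_u'(t_0)=\nabla_uv=-A_{w_0}(u)+h(u)$, where $A_{w_0}$ is the shape operator of $L_{t_0}$ at $p_0$ in the normal direction $w_0$ and $h(u)\in\Hor_{p_0}$ is a purely horizontal correction. Since $h(u_1)\perp u_2\in T_{p_0}L_{t_0}$, the expression $\langle J_{u_1}'(t_0),J_{u_2}(t_0)\rangle-\langle J_{u_1}(t_0),J_{u_2}'(t_0)\rangle$ collapses to $\langle A_{w_0}u_1,u_2\rangle-\langle u_1,A_{w_0}u_2\rangle$, which vanishes by the symmetry of the shape operator.

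Finally, the identity $W(t)=T_{\gamma(t)}L_t$ for every $t$ is obtained by comparing $W$ with the analogous Jacobi space $W^{(t)}$ constructed by the same recipe based at $t$ instead of $t_0$. At a regular $t$, both $W$ and $W^{(t)}$ are $k$-dimensional; the uniqueness of horizontal lifts of $\p\circ\gamma$, itself a manifestation of Proposition \ref{P:facts}(4), together with a comparison of initial data at $t_0$ forces $W=W^{(t)}$, hence $W(t)=T_{\gamma(t)}L_t$. For singular $t$, the equality is obtained by a limit argument along the regular stratum, using that $W$ is a fixed finite-dimensional space of Jacobi fields while $T_{\gamma(s)}L_s$ degenerates into $T_{\gamma(t)}L_t$ as $s\to t$. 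The main obstacle is precisely this last step: one must show that the isotropic variation space $W$ built at a single regular base time is already rich enough to hit $T_{\gamma(t)}L_t$ at every other time, especially where the fiber dimension drops, which requires combining the equifocality of Proposition \ref{P:facts}(4) with a careful analysis of how the tangent spaces to the fibers collapse at singular times.
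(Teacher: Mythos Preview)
Your construction of $W$ and the verification of isotropy are essentially the same as the paper's (the paper bases the variation at a small time $\delta$ via the normal exponential map of $L_0$ rather than at a generic regular $t_0$, but this is cosmetic). The genuine gap is in your final paragraph, and you yourself flag it.

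At a regular time $t$ you assert that ``uniqueness of horizontal lifts\ldots together with a comparison of initial data at $t_0$ forces $W=W^{(t)}$''. This is not justified: a Jacobi field is determined by $(J(t_0),J'(t_0))$, not by $J(t_0)$ alone, and knowing that $W(t_0)=W^{(t)}(t_0)=T_{p_0}L_{t_0}$ does not pin down the derivatives. Proposition~\ref{P:facts}(4) gives you that individual variation geodesics stay in the right fibers, but it does not say that two different $k$-dimensional isotropic spaces of fiberwise-tangent Jacobi fields must coincide. Even granting this, you still need that $t$ is not a focal time of $W$, which is precisely part of what has to be proved. At singular times you offer only a ``limit argument'' and explicitly call it the main obstacle; there is no actual argument there.

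The paper closes this gap by a different mechanism: an open--closed argument on the set $I=\{s:W(t)=T_{\gamma(t)}L_t\text{ for all }t\le s\}$. Closedness uses the Homothetic Transformation Lemma (Lemma~\ref{L:HTL}) together with Lemma~\ref{L:transverse-int}(1) to show that $(h_\lambda)_*W(t_0-\delta')=W(t_0-\lambda\delta')$, so the equality survives the limit $\lambda\to 0$. Openness uses the fact that an isotropic space has $\dim W(t)=\dim W$ off a discrete set (Proposition~\ref{P:equality-ae}) combined with a contradiction: if $\dim L_{t'}>\dim W$ for some nearby $t'$, build the analogous isotropic space $W'$ based at $t'$ and compare dimensions back on $(t_0-\delta,t_0)$. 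This machinery is what your sketch is missing; without it the equality $W(t)=T_{\gamma(t)}L_t$ at times other than $t_0$ remains unproved.
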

\begin{proof}
Let $P_0\subset L_0$ be a relatively compact open set of $L_0$ containing $\gamma(0)$, and let $\epsilon$ small enough, that the normal exponential map $\exp:\nu^{<\epsilon}P_0\to M$ is a $C^2$-diffeomorphism. Fixing $\delta<\epsilon$, any vector $w\in T_{\gamma(\delta)}L_\delta$ is the initial vector of some curve $\alpha_w:(-1,1)\to L_\delta$ with $\alpha_w'(0)=w$. We can write $\alpha_w(s)=\exp(\delta v(s))$, where $v(s)$ is a curve of unit normal vectors in $\nu P_0$. We can then define the family of horizontal geodesics $\gamma_s(t)=\exp(tv(s))$, the Jacobi field $J_w(t)={d\over ds}\big|_{s=0}\gamma_s(t)$, and define $W=\{J_w\mid w\in T_{\gamma(\delta)}L_\delta\}$. It is easy to check that $w\mapsto J_w$ is a linear map, and $W$ is a vector space.

We first prove that $W$ is isotropic. Recall that for any two Jacobi fields $J_1, J_2$, the function $\langle J_1(t),J'_2(t)\rangle-\langle J_1'(t),J_2(t)\rangle$ is constant on $t$, thus it is enough to check that it vanishes at a single time $t=\delta$. Given $J_{v_1},J_{v_2}\in W$, we have
\begin{align*}
\langle J_{v_1}'(\delta), J_{v_2}(\delta)\rangle&=\langle \nabla_{\gamma'(\delta)}J_{v_1}, J_{v_2}(\delta)\rangle=\langle S_{\gamma'(\delta)}J_{v_1}(\delta), J_{v_2}(\delta)\rangle=\langle S_{\gamma'(\delta)}v_1,v_2\rangle
\end{align*}
where $S_{\gamma'(\delta)}$ denotes the shape operator of $L_{\delta}$ in the direction of $\gamma'(\delta)$. Since the shape operator is symmetric, it follows that $\langle J_{v_1}'(\delta), J_{v_2}(\delta)\rangle-\langle J_{v_2}'(\delta), J_{v_1}(\delta)\rangle=0$, and thus $W$ is isotropic.

We now check that the equality $T_{\gamma(t)}L_t=W(t)$ holds for all $t$. Letting $\gamma_*(t)=\p(\gamma(t))$, the family of geodesics $\gamma_s(t)$ above defining $J\in W$ satisfies $\p(\gamma_s(\delta))=\gamma_*(\delta)$ for all $s$. By the Homothetic Transformation Lemma (cf. Lemma \ref{L:HTL} in the Appendix), $\p(\gamma_s(t))=\gamma_*(t)$ for all $s$ and all $t\in (0,\delta)$. In particular, $d_{\gamma_s(0)}\p(\gamma_s'(0))=\gamma_*^+(0)$, and by Proposition \ref{P:facts} it follows that $\p(\gamma_s(t))=\gamma_*(t)$ for all $s$ and for \emph{all} $t\in [0,\ell]$. In particular, $J(t)\in T_{\gamma(t)}L_t$ for any $J\in W$, and $W(t)\subseteq T_{\gamma(t)}L_t$, for all $t\in [0,\ell]$. Furthermore, $W(t)= T_{\gamma(t)}L_t$ for all $t\in (0,\delta)$ and, by part 1 of Lemma \ref{L:transverse-int}, equality holds for $t=0$ as well. We now show equality for all $t$, by showing that the set $$I=\{s\in[0,\ell]\mid W(t)=T_{\gamma(t)}L_t\quad\forall t\in [0,s]\}$$
is open and closed.
To prove it is closed, suppose $[0,t_0)\subseteq I$, and pick $\delta'$ small enough that $\dim W(t_0-\delta')=\dim W$ and such that the Homothetic Transformation Lemma can be applied in a $\delta'$-neighborhood of $L_{t_0}$. For any $w=J(t_0-\delta')\in W(t_0-\delta')$ and any $\lambda\in [0,1]$, we claim that the homothetic transformation $h_\lambda$ around $L_{t_0}$ satisfies
\[
(h_{\lambda})_*(w)=J(t_0-\lambda \delta').
\]
In fact, letting $\gamma_s(t)$ the family of horizontal geodesics such that $J(t)={d\over ds}\Big|_{s=0}\gamma_s(t)$, we know that for every $s$, $\gamma_s(t_0)$ belongs to $L_{t_0}$ and $\psi_s(t):=\gamma_s(t_0-t)$ is the minimizing segment between $\gamma_s(t_0-\delta)$ and $L_{t_0}$. In particular, $h_\lambda(\gamma_s(t_0-\delta'))=h_\lambda(\psi_s(\delta'))=\psi_s(\lambda\delta')=\gamma_s(t_0-\lambda\delta')$. The claim follows by differentiating this equation with respect to $s$. Therefore, $(h_\lambda)_*W(t_0-\delta')=W(t_0-\lambda \delta')$ and for $\lambda=0$ we have $(h_0)_*W(t_0-\delta')=W(t_0)$. On the other hand, by part 1) of Lemma \ref{L:transverse-int}, we also have $$(h_0)_*W(t_0-\delta')=(h_0)_*T_{\gamma(t_0-\delta')}L_{t_0-\delta'}=T_{\gamma(t_0)}L_{t_0},$$ and thus $t_0\in I$ as well.

To prove that $I$ is open, we use the fact that, since $W$ is isotropic, for every $t_0\in[0,\ell]$ there is a $\delta$ such that $\dim W(t)=\dim W$ for all $t\in (t_0-\delta,t_0+\delta)\setminus \{t_0\}$ (cf. Proposition \ref{P:equality-ae}). Thus if $t_0\in I$ then $\dim W=\dim L_t$ for all $t\in (t_0-\delta,t_0)$, and we need to prove that $\dim W=\dim L_t$ for every $t\in (t_0,t_0+\delta)$ as well. We prove so by contradiction: suppose $\dim W< \dim L_{t'}$ for some $t'\in (t_0,t_0+\delta)$. Then by repeating the same arguments as above around $t'$, there is an isotropic subspace $W'$ of Jacobi fields such that $W'(t)\subseteq T_{\gamma(t)}L_{t}$ for all $t$, and $W'(t')= T_{\gamma(t')}L_{t'}$. In particular, $\dim W'> \dim W$. However, for all but finitely many values of $t\in (t_0-\delta,t_0)$, one has
\[
\dim T_{\gamma(t)}L_{t}\geq \dim W'(t)>\dim W(t)=\dim T_{\gamma(t)}L_{t},
\]
giving a contradiction.
\end{proof}

As a corollary of the results in this section, we have
\begin{proposition}\label{P:regular-part-convex}
Let $\p:M\to X$ a manifold submetry, let $M^\reg\subseteq M$ denote the stratum of fibers with maximal dimension, and let $X^\reg=\p(M^\reg)$. Then $X^\reg$ is convex in $X$.
\end{proposition}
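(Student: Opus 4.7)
The plan is, given $x_1, x_2 \in X^{\reg}$ and an arbitrary minimizing geodesic $\gamma_* : [0, \ell] \to X$ from $x_1$ to $x_2$, to lift $\gamma_*$ to a horizontal minimizing geodesic $\gamma$ in $M$ and then apply the isotropic Jacobi-field analysis of Proposition \ref{P:Jacfields} together with the classical fact that a minimizing normal geodesic from a submanifold carries no focal points of that submanifold in its interior. Concretely, I would pick $p_1 \in \sigma^{-1}(x_1)$, lift the initial direction $\gamma_*^+(0) \in \Sigma_{x_1} X$ to a horizontal vector $v \in \sphere_{p_1}$ via the infinitesimal submetry $\sigma_{p_1}$, and set $\gamma(t) = \exp_{p_1}(tv)$. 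By Proposition \ref{P:facts}(2), $\sigma \circ \gamma$ is a quotient geodesic with the same initial direction as $\gamma_*$, and Lemma \ref{L:Qgeod-uniq.} then forces $\sigma \circ \gamma = \gamma_*$. The distance chain
\[
\ell = d_X(x_1, x_2) = d_M(\sigma^{-1}(x_1), \sigma^{-1}(x_2)) \leq d_M(p_1, \gamma(\ell)) \leq \ell
\]
exhibits $\gamma$ as a minimizing normal geodesic from the fiber $\sigma^{-1}(x_1)$ to $\gamma(\ell) \in \sigma^{-1}(x_2)$.

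Next I would apply Proposition \ref{P:Jacfields} to $\gamma$, obtaining an isotropic vector space $W$ of Jacobi fields along $\gamma$ with $W(t) = T_{\gamma(t)} L_t$ for every $t$. Each $J \in W$ is realized as the variation field of a family $\gamma_s(t) = \exp(tv(s))$ where $v(s)$ is a curve of unit normal vectors to a neighborhood of $p_1$ in $\sigma^{-1}(x_1)$, so $W$ lies in the space of Jacobi fields coming from normal variations of $\gamma$ off of the initial fiber. Because $\sigma^{-1}(x_1)$ is regular, $\dim W = \dim \sigma^{-1}(x_1) = d$, the regular fiber dimension.

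The main step, which I expect to be the key technical point, is to identify a hypothetical drop $\dim W(t_0) < d$ at an interior $t_0$ with $\gamma(t_0)$ being a focal point of $\sigma^{-1}(x_1)$ along $\gamma$, using the description of $W$ above. Granted this identification, the classical Riemannian fact that a minimizing normal geodesic from a submanifold has no focal points of that submanifold in its interior applies directly to $\gamma$ (which realizes $d_M(\sigma^{-1}(x_1), \sigma^{-1}(x_2))$) and rules out any drop on $(0, \ell]$. Hence $\dim W(t) = d$ for every $t \in [0, \ell]$; combined with $W(t) = T_{\gamma(t)} L_t$, this gives $\dim L_{\gamma(t)} = d$ and $\gamma(t) \in M^{\reg}$ throughout, so $\gamma_*([0, \ell]) \subset X^{\reg}$. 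Since $\gamma_*$ was an arbitrary minimizing geodesic between $x_1$ and $x_2$, $X^{\reg}$ is convex in $X$.
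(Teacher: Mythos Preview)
Your argument is correct. Both you and the paper lift $\gamma_*$ to a horizontal minimizing geodesic $\gamma$ and invoke Proposition~\ref{P:Jacfields} to produce the isotropic space $W$ with $W(t)=T_{\gamma(t)}L_t$; the divergence is only in how the contradiction is extracted. You observe that each $J\in W$ arises from a variation of $\gamma$ through normal geodesics to $L_0=\sigma^{-1}(x_1)$, hence is an $L_0$-Jacobi field, so a drop $\dim W(t_0)<d$ exhibits $\gamma(t_0)$ as a focal point of $L_0$ and contradicts the classical theorem that a normal geodesic realizing $d(L_0,\gamma(\ell))$ has no focal points of $L_0$ on $(0,\ell)$. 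The paper instead reproves this focal-point obstruction by hand in the submetry setting: assuming $\dim L_{t_0}<d$, it uses Lemma~\ref{L:transverse-int} to see that the closest-point projection $L_{t_0+\epsilon}\to L_{t_0}$ has positive-dimensional fiber, picks a second preimage $\bar p$ of $\gamma(t_0)$, builds a horizontal geodesic $\bar\gamma$ through $\bar p$, shows via Proposition~\ref{P:facts}(4) that $\bar\gamma$ still projects to $\gamma_*$, and concludes that the broken curve $\gamma|_{[0,t_0]}\star\bar\gamma$ has the same length as $\gamma$ while failing to be a geodesic. Your route is shorter and leans on a standard Riemannian fact; the paper's is more self-contained within the toolkit it has built. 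One small imprecision: the focal-point theorem only excludes focal points on the \emph{open} interval $(0,\ell)$, not at $\ell$; this is harmless here since $x_2\in X^{\reg}$ already gives $\dim L_\ell=d$.
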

\begin{proof}
Let $p_*, q_*\in X^\reg$ and let $\gamma_*:[0,1]\to X$ a minimizing geodesic between $p_*$ and $q_*$. We need to prove that $\gamma_*(t)\in X^\reg$ for all $t\in [0,1]$.
Let $L_p=\p^{-1}(p_*)$, $L_q=\p^{-1}(q_*)$ and let $\gamma:[0,1]\to M$ be a horizontal geodesic projecting to $\gamma_*$. Clearly $\gamma$ minimizes the distance between $L_p$ and $L_q$. Suppose by contradiction that for some $t_0\in (0,1)$, $\gamma(t_0)$ is contained in a fiber of non-maximal dimension. By Proposition \ref{P:Jacfields} the tangent spaces of fibers along $\gamma$ are spanned by an isotropic subspace of Jacobi fields, and by standard results on isotropic subspaces of Jacobi fields (see Appendix \ref{A:Lagr}) the dimension of the fiber $L_t$ through $\gamma(t)$ is maximal for all but discretely many values of $t$. By Lemma \ref{L:transverse-int}, for $\epsilon$ small enough, the closest-point projection map $L_{t_0+\epsilon}\to L_{t_0}$ is a submersion. Since by assumption $\dim L_{t_0}<\dim L_{t_0+\epsilon}$, the fiber of $L_{t_0+\epsilon}\to L_{t_0}$ through $\gamma(t_0+\epsilon)$ contains at least another point, call it $\bar{p}$. Let $\bar \gamma:[t_0,1]\to M$ be the horizontal geodesic such that $\bar\gamma(t_0)=\gamma(t_0)$ and $\bar{\gamma}(t_0+\epsilon)=\bar{p}$. Then $\bar\gamma_*(t):=\p\circ\bar\gamma(t)$ equals $\gamma_*(t)$ at $t=t_0$ and $t_0+\epsilon$. By the Homothetic Transformation Lemma, $\bar\gamma_*(t)=\gamma_*(t)$ for $t\in [t_0,t_0+\epsilon]$, and thus by Proposition \ref{P:facts} (4), $\bar\gamma_*(t)=\gamma_*(t)$ for every $t\in [t_0,1]$. But then the concatenation $\gamma|_{[0,t_0]}\star\bar\gamma$ is a (non-minimizing) curve from $L_p$ to $L_q$ with the same length of the (minimizing) curve $\gamma$, contradiction.
\end{proof}

\section{Spherical manifold submetries}\label{S:spherical-ms}

A \emph{spherical manifold submetry} is a manifold submetry from a round sphere of curvature 1. Given a \sms $\sphere^n\to X$, we have from Proposition \ref{P:facts} that $X$ is a spherical Alexandrov space. The goal of this section is to prove the first part of Theorem \ref{MT:1-1correspondence}: namely, we prove that given a Euclidean vector space $V$ and a $C^2$-manifold submetry $\p:\sphere(V)\to X$ from the unit sphere of $V$, there exists a maximal Laplacian algebra $A:=\RR[V]^\p$ whose level sets are the fibers of $\p$.

\begin{proposition}[Basic mean curvature]
Let $\p:\sphere^n\to X$ be a $C^2$ \sms. Then the mean curvature vector field of $\p$ is basic. That is, for any $p_1, p_2\in \sphere^n$ in the same fiber $L$ of maximal dimension, the mean curvature vectors $H_1, H_2$ of $L$ at $p_1$, $p_2$ respectively, satisfy $d_{p_1}\p(H_1)=d_{p_2}\p(H_2)$.
\end{proposition}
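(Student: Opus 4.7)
The mean curvature vector $H_i$ of any $C^2$ submanifold is orthogonal to it, so $H_i\in \Hor_{p_i}$ is horizontal and $d_{p_i}\p(H_i)\in T_{p_*}X$ is well-defined. Since $d\p$ restricts to an isometry on horizontal vectors, it suffices to show that $\langle H_1,v_1\rangle=\langle H_2,v_2\rangle$ whenever $v_i\in\Hor_{p_i}$ satisfy $d_{p_1}\p(v_1)=d_{p_2}\p(v_2)=v_*\in \Sigma_{p_*}X$. To compare the two sides, connect $p_1$ and $p_2$ by a smooth curve $\alpha:[0,1]\to L$ inside the regular fiber, and extend to a smooth horizontal vector field $V$ on a neighborhood of $\alpha([0,1])$ in $L$ with $d\p(V)\equiv v_*$. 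Define the flow $\phi_t(q):=\exp_q(tV(q))$. By Proposition \ref{P:facts}(4), every horizontal geodesic $\gamma_q(t):=\phi_t(q)$ projects to the same quotient geodesic $\gamma_*$, so $\phi_t$ carries a neighborhood of $\alpha([0,1])$ in $L$ into the fiber $L_t$ through $\gamma_*(t)$.

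The isotropic family of Jacobi fields provided by Proposition \ref{P:Jacfields} arises here as variation fields $J_w(t)=\partial_u\phi_t(\beta(u))\big|_{u=0}$, where $\beta\subset L$ is a curve with $\beta(0)=q$, $\beta'(0)=w\in T_qL$. These satisfy $J_w(0)=w$, $J_w'(0)=\nabla_wV$, and $J_w(t)\in T_{\phi_t(q)}L_t$, so for any orthonormal basis $\{e_i\}$ of $T_qL$,
\[
|\det d_q\phi_t|^2=\det\bigl(\langle J_{e_i}(t),J_{e_j}(t)\rangle\bigr).
\]
The first variation of volume for a normal deformation then rewrites the mean curvature as
\[
\langle H_q,V(q)\rangle=\mathrm{tr}\,S_{V(q)}=-\tfrac12\,\tfrac{d}{dt}\Big|_{t=0}\log\det\bigl(\langle J_{e_i}(t),J_{e_j}(t)\rangle\bigr),
\]
reducing the problem to showing that the right-hand side is independent of $q$ along $\alpha$.

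To finish, the plan is to differentiate in $s$ and show the result vanishes, using (i) the identity $\p\circ\gamma_{\alpha(s)}=\gamma_*$ from Proposition \ref{P:facts}(4) together with uniqueness of quotient geodesics (Lemma \ref{L:Qgeod-uniq.}), which rigidly relates the isotropic Jacobi families along different $\gamma_{\alpha(s)}$, and (ii) the Homothetic Transformation Lemma (Lemma \ref{L:HTL}), which provides fiber-preserving identifications of tubular neighborhoods of $L$ near distinct base points $\alpha(s)$. Alternatively, on the regular stratum $\p$ restricts to a Riemannian submersion with equidistant fibers, for which the basic-ness of the fiber mean curvature is a consequence of O'Neill's formulas applied to basic horizontal fields. \textbf{The main obstacle} is precisely this extraction: equidistance is a global condition on pairs of fibers, while the desired statement is a pointwise identity inside a single fiber. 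The key inputs for bridging the gap are the Jacobi-field representation of fiber tangent spaces and the uniqueness of quotient geodesics; together they let one reduce the basic-ness of mean curvature to a computation along a single horizontal geodesic, where the submetry equation becomes a pointwise constraint rather than a comparison between distinct fibers.
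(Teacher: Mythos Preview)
Your proposal sets up a reasonable variational framework but does not actually close the argument, and the proposed shortcut is incorrect.

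First, the ``alternative'' you mention---that for a Riemannian submersion with equidistant fibers O'Neill's formulas yield basic mean curvature---is simply false. Every Riemannian submersion from a complete manifold has equidistant fibers, yet there are well-known examples of Riemannian foliations whose mean curvature form is not basic (this is precisely why the work of \'Alvarez L\'opez and others on basic cohomology of the mean curvature form is nontrivial). O'Neill's $A$ and $T$ tensors describe curvatures and second fundamental forms, but nothing in them forces $\operatorname{tr} S_V$ to be constant along a fiber.

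Second, your primary approach is left as a plan: you write the identity $\langle H_q,V(q)\rangle=-\tfrac12\,\partial_t\big|_{0}\log\det\langle J_{e_i},J_{e_j}\rangle$ and then assert that differentiating in $s$ along $\alpha$ should vanish by equifocality and the Homothetic Transformation Lemma. But you never carry this out, and it is not clear how those ingredients alone force the \emph{volume distortion rate} (a first-order quantity in $t$) to be constant in $s$; they control which fibers are hit, not the infinitesimal geometry of the map $\phi_t$ along the fiber. Moreover, your setup requires connecting $p_1$ and $p_2$ by a curve in $L$, which fails when they lie in different connected components---a case the statement explicitly allows.

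The paper's proof avoids all of this by a different mechanism: it shows the stronger fact that $S_{v_1}$ and $S_{v_2}$ have the \emph{same eigenvalues with multiplicity}. The eigenvalues of $S_{v_i}$ are read off from the focal function $f_{\Lambda_i}$ of the Lagrangian family of $L$-Jacobi fields along $\gamma_i$. Wilking's transverse Jacobi equation gives $f_{\Lambda_i}=f_{W_i}+f_{\Lambda_i/W_i}$; the vertical part $f_{W_i}$ depends only on the dimensions of the fibers met by the common projected geodesic $\gamma_*$, and the horizontal part $f_{\Lambda_i/W_i}$ is governed by a curvature operator that, via the transverse equation, is identified with the curvature of $X^{\reg}$ along $\gamma_*$. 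Both pieces therefore depend only on $\gamma_*$, not on $i$, so $f_{\Lambda_1}=f_{\Lambda_2}$. This argument needs no path between $p_1$ and $p_2$ and yields a strictly stronger conclusion than equality of traces.
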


\begin{proof}
It is enough to prove that, given two points $p_1$, $p_2$ with $\p(p_1)=\p(p_2)=p_*$ and vectors $v_i\in \sphere_{p_i}$, with $d_{p_i}\p(v_i)=v_*$, one has that the shape operator of $L=\p^{-1}(p_*)$ satisfies $\textrm{tr}(S_{v_1})=\textrm{tr}(S_{v_2})$. In fact, we claim that $S_{v_1}$ and $S_{v_2}$ have the same eigenvalues.
The proof of this fact, is essentially the same as \cite[Proposition 3.1]{AR}, and it hinges on the following facts:
\begin{itemize}
\item Letting $\gamma_i(t)=\exp(tv_i)$, $i=1,2$, define the spaces $\Lambda_i$ of Jacobi fields along $\gamma_i$ given by
\[
\Lambda_i=\{J(t)\mid J(0)\in T_{p_i}L,\, J'(0)=-S_{\gamma(0)}J(0)\}\oplus\{J(t)\mid J(0)=0, J'(0)\perp \gamma_i'(0)\oplus T_{p_i}L\}.
\]
These are the Lagrangian spaces of Jacobi fields (see Appendix \ref{A:Lagr}) consisting of Jacobi fields generated by variations of $\gamma_i$ via horizontal geodesics through $L$. Their focal functions $f_{\Lambda_i}(t)=\dim\{J\in \Lambda_i\mid J(t)=0\}$ have the property that $\lambda$ is an eigenvalue of $S_{v_i}$ with multiplicity $m$, if and only if $f_{\Lambda_i}(\arctan(1/\lambda))=m$.
\item The spaces $W_i$ of Jacobi fields along $\gamma_i$ defined in the Proposition \ref{P:Jacfields} are clearly contained in $\Lambda_i$. By equation \eqref{E:focal-sum} in Appendix \ref{A:Lagr}, the following formulas for the focal functions hold:
\[
f_{\Lambda_i}(t)=f_{W_i}(t)+f_{\Lambda_i/W_i}(t).
\]
\item By Proposition \ref{P:Jacfields}, the function $f_{W_i}(t)$ can be rewritten as
\[
f_{W_i}(t)=\dim W_i - \dim W_i(t)=\left(\max_{t\in \RR}\dim L_{\gamma_i(t)}\right)-\dim L_{\gamma_i(t)}.
\]
Since $\gamma_1(t)$ and $\gamma_2(t)$ are contained in the same leaves for every $t$, clearly $f_{W_1}(t)=f_{W_2}(t)$ for every $t$.
\item Using Wilking's Transverse Jacobi Equation (see Example \ref{E:example} in  Appendix \ref{A:Lagr}) the curvature operators $R^{H_i}(t)$ of the quotient bundles $H_i=E/E_{W_i}$ can be identified, for all but discretely many $t\in I$, with the Riemann curvature operator of $X$ along $\gamma_*(t)$. By continuity, $R^{H_1}(t)=R^{H_2}(t)$ and, in particular, $f_{\Lambda_1/W_1}(t)=f_{\Lambda_2/W_2}(t)$.
\end{itemize}
Summing up, we have $f_{\Lambda_1}(t)=f_{\Lambda_2}(t)$ for all $t$, and therefore the eigenvalues of $S_{v_1}$, $S_{v_2}$ agree.
\end{proof}

\begin{proposition}\label{P:homog-pol}
Let $\p:\sphere^n\to X$ be a $C^2$ \sms and let $A\subset \RR[x_0,\ldots x_n]$ be the algebra generated by the homogeneous polynomials \todo[color=cyan]{added ``generated by''} which are constant along the fibers of $\p$. Then
\begin{itemize}
\item $A$ is finitely generated.
\item Letting $\rho_1,\ldots, \rho_k$ generators of $A$ and $\rho=(\rho_1,\ldots, \rho_k):\sphere^n\to \RR^k$, then the fibers of $\p$ coincide with the fibers of $\rho$.
\item Letting $X'=\p_A(\sphere^n)$, the map $\rho$ induces a homeomorphism $\rho':X'\to X$ such that $\p=\rho'\circ\rho$.
\end{itemize}
\end{proposition}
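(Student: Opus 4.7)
My plan is to prove all three parts by first constructing an averaging (Reynolds) operator $\Av$ that projects $\RR[V]$ onto $A$, then applying Hilbert's classical finite-generation argument, and finally a spectral-decomposition argument for fiber separation. To define $\Av$, I integrate each smooth function on $\sphere^n$ over the fibers of $\p$ against their intrinsic Riemannian volume forms. On the regular stratum, the standard splitting of $\Delta_{\sphere^n}$ into horizontal, vertical, and mean-curvature contributions, combined with the preceding basic-mean-curvature proposition, shows that $\Delta_{\sphere^n}$ preserves basic smooth functions and commutes with $\Av$; an elliptic-regularity bootstrap then extends the identity $[\Delta_{\sphere^n},\Av]=0$ across the singular strata. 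Since each eigenspace of $\Delta_{\sphere^n}$ is exactly a space $\mathcal{H}_d$ of homogeneous harmonic polynomials restricted to the sphere, $\Av$ preserves each $\mathcal{H}_d$, and hence by homogeneous extension determines a linear degree-preserving projection $\RR[V]\to A$ satisfying the Reynolds identity $\Av(pq)=p\,\Av(q)$ for $p\in A$.

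With this Reynolds operator in hand, finite generation follows from Hilbert's classical argument. Let $I\subset\RR[V]$ be the ideal generated by $A_+:=\bigoplus_{d>0}A_d$. By Hilbert's basis theorem, $I$ admits a finite generating set $\{f_1,\ldots,f_k\}\subset A_+$ consisting of homogeneous elements. For any homogeneous $f\in A_+$, write $f=\sum_i g_i f_i$ with $g_i\in\RR[V]$ homogeneous of strictly smaller degree; applying the Reynolds identity gives $f=\sum_i \Av(g_i)\,f_i$ with each $\Av(g_i)\in A$. Induction on degree yields $A=\RR[f_1,\ldots,f_k]$.

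For fiber separation, given two distinct fibers $L_1,L_2$ of $\p$, a smooth bump function supported in a tubular neighborhood of $L_1$ missing $L_2$ becomes, after applying $\Av$, a smooth basic function distinguishing them; hence smooth basic functions separate fibers. Expanding any smooth basic function in the eigenbasis of $\Delta_{\sphere^n}$ yields a $C^0$-convergent series of basic harmonic polynomials, so basic polynomials already separate fibers of $\p$. Therefore $\rho=(f_1,\ldots,f_k)$ has exactly the fibers of $\p$, and the induced map $\rho':X\to X':=\rho(\sphere^n)$ is a continuous bijection between compact Hausdorff spaces (since $\sphere^n$, hence $X$, is compact), and thus a homeomorphism. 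I expect the main obstacle in this plan to be the very first step---rigorously establishing $[\Delta_{\sphere^n},\Av]=0$ globally and that $\Av$ restricts to an operator on $\RR[V]$---since the behavior of $\Av$ near the singular strata, where fiber dimension drops, requires careful analysis to push the commutation relation past discontinuities of the fiber geometry.
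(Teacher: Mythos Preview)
Your proposal is correct and follows essentially the same strategy as the paper's proof: construct the fiberwise averaging operator, use the basic-mean-curvature result and elliptic regularity to show it commutes with the Laplacian and hence restricts to a Reynolds operator on polynomials, apply Hilbert's finite-generation argument, and then use a density/approximation argument to separate fibers. The only minor variation is that you pass through the spectral decomposition of $\Delta_{\sphere^n}$ into spherical harmonics to see that $\Av$ preserves polynomial spaces, whereas the paper instead extends the averaging operator to the cone $\RR^{n+1}$ and uses that a smooth homogeneous function on $\RR^{n+1}$ is a polynomial; these are equivalent implementations of the same idea.
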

\begin{proof}
With the work done up to this point, the proof of this proposition is the same as in the case of singular Riemannian foliations in spheres, cf. \cite{LR}. We quickly sum up the strategy of the proof.

\begin{itemize}
\item Let $[\cdot]: L^2(\sphere^n)\to L^{2}(\sphere^n)$ be the \emph{averaging operator}, which takes a function $f$ to the function $[f]$ defined by
\[
[f](p)={1\over \textrm{vol}(L_p)}\int_{L_p}fd\textrm{vol}_{L_p},
\]
where $L_p$ is the $\p$-fiber through $p$, and $d\textrm{vol}_{L_p}$ is the volume form induced by the inclusion $L_p\to \sphere^n$.
\item Since the mean curvature of any regular fiber is basic, it follows that $[\cdot]$ takes Lipschitz functions to Lipschitz functions, and $\Delta [f]=[\Delta f]$. By the regularity theory of elliptic equations, it follows that $[\cdot]$ defines a map
\[
[\cdot]:C^\infty(\sphere^n)\to C^\infty(\sphere^n)^\p
\]
where $C^\infty(\sphere^n)^\p$ denotes the set of smooth functions that are constant along the fibers of $\p$, also called \emph{smooth $\p$-basic functions}.
\item The averaging operator extends to a continuous operator $C^{\infty}(\RR^{n+1})\to C^{\infty}(\RR^{n+1})^{C(\p)}$, where $C(\p):\RR^{n+1}\to \textrm{Cone}(X)$ is the manifold submetry taking $t\cdot p$ ($t\in \RR_+$, $p\in \sphere^n$) to $t\cdot \p(p)$, and this operator commutes with rescaling. Therefore, for any homogeneous polynomial $P$, the average $[P]$ is also a homogeneous polynomial, of the same degree of $P$.
\item Let $A=\RR[x_1,\ldots x_{n+1}]^\p$ be the ring generated by $\p$-basic, polynomials. Then by the point above, the averaging operator defines a map $[.]:\RR[x_1,\ldots x_{n+1}]\to A$ which by construction satisfies $[PQ]=P[Q]$ for every $P\in A$, that is, a \emph{Reynolds operator}. By classic work of Hilbert, this implies that $A$ is finitely generated (see also Lemma \ref{L:corReynolds} for a proof).
\item Since $\RR[x_1,\ldots x_{n+1}]$ is dense in $C^{\infty}(\RR^{n+1})$, it follows that $A\subset C^{\infty}(\RR^{n+1})^\p$ is dense as well in the
\todo{(11): ``norm'' to ``topology''}
$C^0$ topology. In particular, the polynomials in $A$ distinguish the fibers of $\p$. In other words, the fibers of $\p$ coincide with the fibers of $\rho$.
\item Letting $\rho_1,\ldots \rho_k$ generators of $A$, $\rho=(\rho_1,\ldots, \rho_k):\sphere^n\to \RR^k$, and $X'=\rho(\sphere^n)$, one can define a map $\rho':X'\to X$ by $\rho'(\rho(x_1,,\ldots x_n))=\p(x_1,\ldots, x_n)$. The function $\rho'$ is well defined and injective because by definition the fibers of $\p_A$ equal the fibers of $\p$. Surjectivity is obvious. Finally, since the $\p$-fibers are compact, the map $\rho'$ is a proper (bijective) map, hence a homeomorphism.
\end{itemize}
\end{proof}

In particular, we get the proof of the first half of Theorem \ref{MT:1-1correspondence}, namely:

\begin{theorem}\label{T:subm-to-poly}
Let $V$ be a Euclidean vector space, and $\p:\sphere(V)\to X$ a $C^2$ manifold submetry. Then the algebra $A=\mathcal{B}(\p)$ of homogeneous $\p$-basic polynomials is a maximal Laplacian algebra, and $\mathcal{L}(A)\sim \p$.
\end{theorem}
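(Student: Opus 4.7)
The plan is to derive the three assertions — Laplacian, maximal, and $\mathcal{L}(A)\sim\p$ — directly from Proposition~\ref{P:homog-pol} and the averaging operator $[\cdot]$ built there (whose existence rests on basic mean curvature). The third assertion is essentially immediate: Proposition~\ref{P:homog-pol} shows that the homogeneous $\p$-basic polynomials separate $\p$-fibers in $\sphere(V)$, so the equivalence relation $\sim_A$ on $\sphere(V)$ is exactly the partition into $\p$-fibers, and the homeomorphism $\rho'\colon X'\to X$ in that proposition upgrades this to an isomorphism $\p_A\sim\p$ in $\Subm(\sphere(V))/\sim$.

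For the Laplacian property, first note that $r^2$ is a homogeneous polynomial of degree $2$ which is constant (equal to $1$) on all of $\sphere(V)$, hence trivially $\p$-basic; thus $r^2\in A$. Next, to show $\Delta f\in A$ when $f\in A$, it suffices to verify this when $f$ is one of the homogeneous $\p$-basic generators. The polynomial $\Delta f$ is again homogeneous (of degree $\deg f - 2$), so only its $\p$-basicness needs checking. Here one invokes the identity $\Delta[g]=[\Delta g]$ established in the proof of Proposition~\ref{P:homog-pol} (itself a consequence of basic mean curvature together with elliptic regularity). Since $f$ is already basic we have $[f]=f$, hence $\Delta f=\Delta[f]=[\Delta f]$; this says $\Delta f$ coincides with its own fiberwise average and is therefore constant on each fiber.

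For maximality, the argument runs through the Reynolds operator $[\cdot]\colon\RR[V]\to A$. The averaging operator extends from $\sphere(V)$ to $V$ by homogeneity in such a way that it sends homogeneous polynomials of degree $d$ to homogeneous $\p$-basic polynomials of degree $d$; decomposing by homogeneous components, its image on $\RR[V]$ lies in $A$. Now suppose $f\in\RR[V]$ is constant on each $\sim_A$-class. Using that $r^2\in A$ together with the homogeneity of the generators of $A$, one first identifies the $\sim_A$-classes in $V$ with the cones over $\p$-fibers, i.e. with the fibers of the cone submetry $C(\p)$. Thus $f$ is constant on each $C(\p)$-fiber, so averaging over a fiber returns the constant value of $f$ there, giving $[f]=f$. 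Since $[f]\in A$, we conclude $f\in A$, so $A$ is maximal.

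The genuinely hard work — proving basic mean curvature, constructing the averaging operator, and checking that it is polynomial-valued and degree-preserving — has already been packaged in the preceding results, so the theorem is largely an assembly step. The main bookkeeping obstacle is the maximality argument: one must confirm that $\sim_A$-classes in the whole of $V$ are precisely the cones over $\p$-fibers, and that the averaging operator (naturally defined fiberwise on the sphere) extends consistently to $\RR[V]$ so that $[f]=f$ for $f$ constant on $C(\p)$-fibers. Once this identification is in place, all three conclusions follow with no further analysis.
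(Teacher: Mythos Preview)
Your argument tracks the paper's almost exactly: the Laplacian property via $\Delta[f]=[\Delta f]$ together with the characterization $[f]=f\Leftrightarrow f\in A$, and the equivalence $\mathcal{L}(A)\sim\p$ straight from Proposition~\ref{P:homog-pol}. For maximality the paper is more direct and does not pass through the cone $C(\p)$: once $\p_A\sim\p$ is established, any $P\notin A$ is by definition of $A$ not $\p$-basic, hence not constant on some $\sim_A$-class.

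Your detour through $C(\p)$ contains a slip. The implication ``$f$ constant on $\sim_A$-classes in $\sphere(V)$ $\Rightarrow$ $f$ constant on $C(\p)$-fibers in $V$'' is false in general: take $f=x_1(1-r^2)$, which vanishes identically on $\sphere(V)$ (so is trivially constant on every $\p$-fiber) but along a ray $t\mapsto tv$ equals $tv_1(1-t^2)$ and is therefore not constant on the cone over a generic fiber. For this $f$ one has $[f]\neq f$, so your conclusion $f\in A$ fails. The repair is to restrict attention to \emph{homogeneous} $f$, for which constancy on $\p$-fibers in the sphere does extend to the cone by scaling; since $A$ is graded this suffices. (The paper's one-liner glosses over the same subtlety --- the literal claim ``$P\notin A\Rightarrow P$ not constant on some $\p$-fiber'' is also false for the example above --- but the only form of maximality ever used later is $A=\mathcal{B}(\mathcal{L}(A))$, which concerns only homogeneous polynomials and is immediate from $\mathcal{L}(A)\sim\p$.)
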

\begin{proof}
We start by proving that $A$ is Laplacian. First, being $r^2=\sum_ix_i^2$ constant on the whole sphere, it is $\p$-basic and thus $r^2\in A$. Secondly, let $[\cdot ]:\RR[V]\to A$ be the averaging operator defined in the Proposition \ref{P:homog-pol}, and notice that $[P]=P$ if and only if $P\in A$. Then $\Delta P=\Delta [P]=[\Delta P]$ and thus $\Delta P\in A.$

\todo{(12): word ``the algebra'' added}
By Proposition \ref{P:homog-pol} the algebra $A$ is finitely generated, and letting $\rho_1,\ldots \rho_k$ be generators of $A$, it is clear that two points $p, q\in \sphere(V)$ satisfy $p\sim_A q$ if and only if $\rho_i(p)=\rho_i(q)$ for all $i=1,\ldots k$. In particular, $p\sim_A q$ if and only if $p,q$ are in the same fiber of $\rho:\sphere(V)\to X'$ and thus $\rho\sim \p_A$. Since by Proposition \ref{P:homog-pol} we have $\rho\sim \p$, it follows that $\p\sim \p_A=\mathcal{L}(\mathcal{B}(\p))$.

Finally, we prove that $A$ is maximal. Letting $P\notin A$ a polynomial, it follows by definition of $A$ that there are two points $p, q$ in the same fiber of $\p$, such that $P(p)\neq P(q)$. Since, by the previous point, the fibers of $\p$ coincide with the fibers of $\p_A$, it follows that $f(p)=f(q)$ for any $f\in A$, and thus $A$ is maximal by definition.
\end{proof}

\part{From Laplacian algebras to manifold submetries}

Up to now, we started from manifold submetries and constructed polynomial algebras from them. The goal of this second part is to show that any Laplacian algebra $A\subseteq \RR[V]$ gives rise to a manifold submetry $\hat{\pi}_A:\sphere(V)\to \hat{X}_A$.

\section{Fundamental properties of Laplacian algebras}

In this section, we start exploring the algebraic properties of Laplacian algebras. The main result is that Laplacian algebras admit a Reynolds operator (Theorem \ref{T:Reynolds}). This means that they have many of the same properties as algebras of invariant polynomials, for example being finitely generated.

\subsection{Duality and higher products}\label{SS:duality}
Given a graded polynomial algebra $A$, we will denote by $A_d$ the subspace of degree-$d$ polynomials in A. We define a sequence of symmetric, $\RR$-bilinear products
\begin{align*}
\bullet_k:& \RR[V]_a\otimes \RR[V]_b\to \RR[V]_{a+b-2k}\\
f\bullet_k g&=\sum_{a_1=1}^n\ldots \sum_{a_k=1}^n\left({\partial^k f\over \partial x_{a_1}\ldots \partial x_{a_k}}\right)\left({\partial^k g\over \partial x_{a_1}\ldots \partial x_{a_k}}\right)\\
&= \sum_{|\alpha|=k}{k\choose \alpha}(\partial^\alpha f)(\partial^\alpha g),
\end{align*}

where in the last line $\alpha=(\alpha_1,\ldots \alpha_n)$ is a multi index with $|\alpha|=\sum_i\alpha_i$, ${k\choose \alpha}={k!\over \alpha_1!\ldots \alpha_n!}$ and $\partial^\alpha{f}={\partial^kf\over \partial x_1^{\alpha_1}\ldots \partial x_n^{\alpha_n}}$. The equality between the second and the third line is due to the fact that the number of differentials ${\partial^k \over \partial x_{a_1}\ldots \partial x_{a_k}}$ giving rise to the same differential $\partial^\alpha$, $|\alpha|=k$, is precisely ${k\choose \alpha}$.

Given $f\in \R[V]_k$, $f=\sum_\alpha c_{\alpha}x^\alpha$, define the dual operator $\hat{f}:\R[V]\to \R[V]$  by 
\[
\hat{f}=\sum_\alpha c_\alpha \partial_\alpha.
\]
\todo{(13): We point out that $\widehat{fg}=\hat{f}\circ \hat{g}$}
Since the coefficient $c_\alpha$ are constant, it follows from the definition that $\widehat{fg}=\hat{f}\circ \hat{g}$. It is easy to see from the second definition of $\bullet_k$ that for any polynomial $g$,
\[
{1\over k!}f\bullet_k g=\hat{f}(g),
\]
because both terms are linear in $f$, and it easily holds for monomials.
Observing that $g\bullet_d g$ is a positive constant for every nonzero $g\in \RR[V]_d$ we may define an inner product on each $\R[V]_d$ by
\[
\left< f,g\right>_d=\hat{f}(g)=\hat{g}(f)={1\over d!}f\bullet_d g.
\]

Note that, with respect to this inner product, multiplication by $f$ is adjoint to $\hat{f}$. Indeed,
\begin{equation}
\label{E:adjoint}
\scal{gf,h}_d=\widehat{gf}(h)=\hat{g}(\hat{f}(h))=\scal{g, \hat{f}(h)}_{d-k}.
\end{equation}

\todo{(13+): Added remark about these products in the literature}
\begin{remark}
In this generality, we are not aware of these products being used before, even in classical invariant theory. However, one can easily see that the widely used \emph{polarizations} and \emph{generalized polarizations} of an invariant polynomial $f$ correspond to taking the product $P\bullet_1 f$ and $Q\bullet_2 f$ respectively, for very special choices of polynomials $P, Q$.
\end{remark}

\subsection{Laplacian algebras and Reynolds operators}
The following lemma shows that the products $\bullet_k$ in the previous section, can be defined in terms of the Laplacian.
\begin{lemma}\label{L:bullet-laplacian}
The higher products $\bullet_k$ can be written in terms of the Laplacian and the product structure, via the inductive formula:
\begin{equation}
\label{E:inductive}
f\bullet_0 g:= fg,\qquad f\bullet_{k+1}g:={1\over 2}\big(\Delta(f\bullet_k g)-(\Delta f)\bullet_k g - f\bullet_k(\Delta g)\big)
\end{equation}
\end{lemma}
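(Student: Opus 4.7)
The plan is to prove the identity by direct computation, using the explicit multinomial formula
\[
f\bullet_k g = \sum_{|\alpha|=k}\binom{k}{\alpha}(\partial^\alpha f)(\partial^\alpha g)
\]
rather than an induction on $k$. The base case $f\bullet_0 g = fg$ is immediate from the definition (the empty product of derivatives).

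The key input is the Leibniz rule for the Laplacian, namely
\[
\Delta(AB) = (\Delta A)B + 2\sum_{i=1}^n (\partial_i A)(\partial_i B) + A(\Delta B),
\]
combined with the fact that $\Delta$ commutes with every partial derivative $\partial^\alpha$. Applying this termwise to $f\bullet_k g$ and using $\Delta \partial^\alpha = \partial^\alpha \Delta$, I would obtain
\[
\Delta(f\bullet_k g) = (\Delta f)\bullet_k g + f\bullet_k(\Delta g) + 2\sum_{|\alpha|=k}\sum_{i=1}^n\binom{k}{\alpha}\bigl(\partial_i\partial^\alpha f\bigr)\bigl(\partial_i\partial^\alpha g\bigr).
\]

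The remaining task is to identify the cross-term sum with $f\bullet_{k+1}g$. Reindexing by $\beta=\alpha+e_i$ and collecting, the coefficient of $(\partial^\beta f)(\partial^\beta g)$ becomes $\sum_{i\,:\,\beta_i\geq 1}\binom{k}{\beta-e_i}$. A short multinomial computation shows
\[
\sum_{i\,:\,\beta_i\geq 1}\binom{k}{\beta-e_i} = \frac{k!}{\beta_1!\cdots\beta_n!}\sum_i \beta_i = \binom{k+1}{\beta},
\]
so the cross-term sum equals $f\bullet_{k+1}g$, and rearranging yields the inductive formula \eqref{E:inductive}.

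There is no real obstacle here; the only slightly delicate point is the bookkeeping in the reindexing, which is why I would state and verify the multinomial identity $\sum_i \binom{k}{\beta-e_i}=\binom{k+1}{\beta}$ as a separate intermediate step. Everything else reduces to the commutation of $\Delta$ with $\partial^\alpha$ and the Leibniz rule.
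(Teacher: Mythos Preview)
Your proof is correct and follows essentially the same strategy as the paper: apply the Leibniz rule for $\Delta$ termwise and use that $\Delta$ commutes with $\partial^\alpha$. The only difference is bookkeeping: the paper works with the ordered-tuple form $f\bullet_k g=\sum_{\bar a\in\{1,\dots,n\}^k}(\partial_{\bar a}f)(\partial_{\bar a}g)$, so the cross term $2\sum_{\bar a}\sum_i(\partial_i\partial_{\bar a}f)(\partial_i\partial_{\bar a}g)$ is immediately a sum over $\{1,\dots,n\}^{k+1}$, i.e.\ $2\,f\bullet_{k+1}g$, without any reindexing or the multinomial identity $\sum_i\binom{k}{\beta-e_i}=\binom{k+1}{\beta}$.
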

\begin{proof}
The result is clear for $k=0$. For $k>0$ define ${\bf n}=\{1,\ldots n\}$ and, given $\ai=(a_1,\ldots a_k)\in {\bf n}^k$, let $\partial_\ai f:={\partial^k f\over \partial x_{a_1}\ldots \partial x_{a_k}}$. It is a direct computation that:
\begin{align*}
\Delta(f\bullet_k g)&= \Delta\left(\sum_{\ai\in {\bf n}^k} (\partial_\ai f)(\partial_\ai g)\right)\\
&=2\sum_{\ai\in {\bf n}^{k+1}}  (\partial_\ai f)( \partial_\ai g)+ \sum_{\ai\in{\bf n}^k} (\Delta \partial_\ai f) (\partial_\ai g)+ \sum_{\ai\in{\bf n}^k}  (\partial_\ai f)( \Delta\partial_\ai g)
\end{align*}
Since $\Delta \partial_\ai f= \partial_\ai \Delta f$ and same for $g$, the computations become
\begin{align*}
\Delta(f\bullet_k g)&=2f\bullet_{k+1} g+ \sum_{\ai\in{\bf n}^k} ( \partial_\ai \Delta f) (\partial_\ai g)+ \sum_{\ai\in{\bf n}^k}  (\partial_\ai f)( \partial_\ai \Delta g)\\
&=2f\bullet_{k+1} g+  ( \Delta f)\bullet_k g+ f\bullet_k(\Delta g)
\end{align*}
and the result is proved.
\end{proof}

\begin{corollary}
Let $A$ be a Laplacian algebra. Then:
\begin{enumerate}
\item For any $f,g\in A$, and any $k$, $f\bullet_k g\in A$.
\item $A$ is a graded ring.
\item For any $f\in A$, the operator $\hat{f}$ takes $A$ into $A$.
\end{enumerate}
\end{corollary}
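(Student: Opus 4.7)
The plan is to take the three parts in the order (1), (2), (3), since each uses the previous one.

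For (1), I would argue by induction on $k$ using the recursive formula \eqref{E:inductive} from Lemma \ref{L:bullet-laplacian}. The base case $f\bullet_0 g = fg$ lies in $A$ because $A$ is an algebra. For the inductive step, every term on the right side of the identity $f\bullet_{k+1}g = \tfrac{1}{2}\bigl(\Delta(f\bullet_k g) - (\Delta f)\bullet_k g - f\bullet_k(\Delta g)\bigr)$ lies in $A$: the factor $f\bullet_k g$ lies in $A$ by induction, $\Delta f, \Delta g \in A$ because $A$ is Laplacian, and $A$ is closed under $\Delta$ and under the inductive products $\bullet_k$.

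For (2), the main point — and what I expect to be the only mildly subtle step — is to extract the Euler operator $E = \sum_i x_i\partial_i$ from the two operations already available on $A$, namely multiplication by $r^2$ and $\Delta$. A direct computation gives the commutator identity
\[
\Delta(r^2 f) - r^2\,\Delta f = 4\,Ef + 2n\,f,
\]
where $n=\dim V$; this is essentially the $\mathfrak{sl}(2,\R)$-relation from Remark \ref{R:Segal-Shale-Weil}. Since $r^2\in A$ and $A$ is closed under $\Delta$, the left-hand side lies in $A$ whenever $f\in A$, so $Ef\in A$. Iterating, $E^k f\in A$ for every $k\geq 0$. Writing $f=\sum_{d=0}^N f_d$ with $f_d$ homogeneous of degree $d$, one has $E^k f = \sum_d d^k f_d$; the Vandermonde matrix in the distinct exponents $0,1,\dots,N$ is invertible, so each $f_d$ is an $\R$-linear combination of $f, Ef,\dots, E^N f$, hence lies in $A$. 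This shows $A$ is graded.

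For (3), the identity $\hat f(g) = \tfrac{1}{k!}\, f\bullet_k g$ from Section \ref{SS:duality} (valid for $f$ homogeneous of degree $k$) together with (1) immediately gives $\hat f(g)\in A$ whenever $f,g\in A$ and $f$ is homogeneous. For general $f\in A$, decompose $f=\sum_d f_d$ using (2) so that each $f_d\in A$; then $\hat f = \sum_d \widehat{f_d}$, and by linearity $\hat f(g) = \sum_d \widehat{f_d}(g) \in A$. The only real work is in (2); parts (1) and (3) are essentially formal consequences of Lemma \ref{L:bullet-laplacian} and the identification of $\hat f$ with $\tfrac{1}{k!}f\bullet_k(\cdot)$.
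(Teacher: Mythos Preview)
Your argument is correct and follows the same outline as the paper's proof. The only cosmetic difference is in part (2): the paper extracts the Euler operator by invoking part (1) with $\tfrac{1}{2}\,r^2\bullet_1 f=\sum_j j\,f_j$, whereas you compute the commutator $\Delta(r^2 f)-r^2\Delta f$ directly; unpacking $r^2\bullet_1 f$ via \eqref{E:inductive} shows these are the same identity, and both then finish with the Vandermonde argument.
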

\begin{proof}
1. Follows directly from Lemma \ref{L:bullet-laplacian}, since the operations $\bullet_k$ are defined in terms of the algebra structure, and the Laplacian.

2. Decompose $f\in A$ into its homogeneous parts $f=\sum_j f_j$, where $f_j$ has degree $j$. Then $\frac{1}{2} r^2\bullet_1 f =\sum_j j \,f_j\in A$. Applying this $\deg(f)$ many times and using the invertibility of the Vandermonde matrix shows that $f_j\in A$ for every $j$.

3. For $f$ homogeneous it is clear, since $\hat{f}(g)=f\bullet_j g$, with $j=\deg(f)$. In general, decompose $f\in A$ into its homogeneous parts $f=\sum_j f_j$, where $f_j$ has degree $j$. By the previous point, $f_j\in A$ for all $j$. Then $\hat{f}=\sum \hat{f}_j$ and each $\hat{f}_j$ takes $A$ into $A$.
\end{proof}

We can now prove the existence of the Reynolds operator (Theorem  \ref{T:Reynolds} below). To do this, let us define the projection $\Pi:\RR[V]\to A$ degree wise, by letting $\Pi_d:\RR[V]_d\to A_d$ be the orthogonal projection with respect to the inner product $\scal{\cdot\,,\,\cdot}$ defined in Section \ref{SS:duality}. Recall that, if $A$ is the algebra of homogeneous basic polynomials of a manifold submetry $\p:\sphere(V)\to X$, then there is an averaging operator $[\cdot]:\RR[V]\to A$, see the proof of Proposition \ref{P:homog-pol}. 

Then:

\begin{theorem}
\label{T:Reynolds}
Let $A\subset \R[V]$ be a Laplacian algebra. Then the projection $\Pi=\bigoplus_d \Pi_d:\R[V]\to A$ is a Reynolds operator, that is,  $\Pi(fg)=f\Pi(g)$ for $f\in A$ and $g\in\R[V]$. Moreover, if $A$ is the algebra of basic polynomials of a manifold submetry, then $\Pi$ coincides with the averaging operator.
\end{theorem}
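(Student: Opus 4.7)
By linearity and the grading of $A$ (part (2) of the preceding corollary), it suffices to check $\Pi(fg) = f\,\Pi(g)$ for homogeneous $f \in A_k$ and $g \in \RR[V]_d$. Both sides lie in $A_{k+d}$, and nondegeneracy of the Fischer inner product on $A_{k+d}$ means equality is detected by pairing with every $h \in A_{k+d}$. The chain
\begin{align*}
\langle \Pi(fg), h\rangle = \langle fg, h\rangle = \langle g, \hat{f}(h)\rangle = \langle \Pi(g), \hat{f}(h)\rangle = \langle f\,\Pi(g), h\rangle
\end{align*}
will close using that $\Pi$ is the Fischer-orthogonal projection onto $A$ (first and third equalities), the adjoint identity \eqref{E:adjoint} (second and fourth equalities), and, crucially, that $\hat{f}(h) \in A$ by part (3) of the preceding corollary.

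\textbf{Stage 2: agreement with the averaging operator.} Now assume $A = \mathcal{B}(\p)$ for a manifold submetry $\p:\sphere(V) \to X$. My first task will be to show that $[\cdot]_d$ is the orthogonal projection of $\RR[V]_d$ onto $A_d$ with respect to the bilinear form $(f,g)_\sigma := \int_{\sphere(V)} f\,g\,d\sigma$: disintegrating $d\sigma$ along the fibers of $\p$ and using that both $[f]$ and $h \in A_d$ are fiberwise constant, one obtains $(f - [f], h)_\sigma = 0$ for all such $h$. The desired equality $\Pi_d = [\cdot]_d$ is then equivalent to the claim that the Fischer form $\langle\cdot,\cdot\rangle$ and $(\cdot,\cdot)_\sigma$ induce the same orthogonal complement of $A_d$ inside $\RR[V]_d$.

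To establish this coincidence I would use the harmonic decomposition $\RR[V]_d = \bigoplus_k r^{2k}\,\mathcal{H}_{d-2k}$, where $\mathcal{H}_j$ denotes the degree-$j$ harmonic homogeneous polynomials. Both inner products are block-diagonal for this decomposition (classical for the Fischer form; for $(\cdot,\cdot)_\sigma$ because spherical harmonics of distinct degrees are orthogonal on the sphere), and on each block $r^{2k}\mathcal{H}_{d-2k}$ they are positive scalar multiples of one another. Since $A$ is $\mathfrak{sl}(2,\RR)$-invariant by Remark \ref{R:Segal-Shale-Weil}, an induction on $d$ using that $\Delta$ preserves $A$ and shifts the harmonic index down by one yields the compatible splitting $A_d = \bigoplus_k r^{2k}(A \cap \mathcal{H}_{d-2k})$. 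The orthogonal projection onto $A_d$ is then computed block by block and agrees for the two inner products, so $\Pi_d = [\cdot]_d$.

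The step I anticipate as the main obstacle is establishing this compatible splitting of $A_d$ with the harmonic decomposition: while conceptually clean via the $\mathfrak{sl}(2,\RR)$-action, the inductive extraction of the harmonic components of $f \in A_d$ through iterated Laplacians requires some care to verify that the relevant normalizing scalars never vanish in the appropriate degree ranges.
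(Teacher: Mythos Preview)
Your Stage 1 is correct and is essentially the paper's argument, repackaged: the paper writes $g=g_1+g_2$ with $g_1=\Pi(g)$ and shows $\Pi(fg_2)=0$ by the same adjoint computation $\scal{Q,fg_2}=\scal{\hat f Q,g_2}=0$, using $\hat f Q\in A$. Your chain of equalities is the same content.

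Your Stage 2 is correct but takes a genuinely different route from the paper. The paper avoids the $L^2$ inner product entirely: it observes that since both $\Pi$ and $[\cdot]$ are idempotent with image $A$, it suffices to show $\ker[\cdot]\perp_{\text{Fischer}} A$. For $[g]=0$ and $f\in A_d$ it proves $[f\bullet_k g]=f\bullet_k[g]$ by induction from the recursive formula \eqref{E:inductive} together with $[\Delta P]=\Delta[P]$ and $[fP]=f[P]$ for basic $f$; taking $k=d$ gives $\scal{f,g}=\tfrac{1}{d!}\,f\bullet_d g=[f\bullet_d g]=f\bullet_d[g]=0$ (the middle equality because a constant is basic). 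Your approach instead shows $[\cdot]$ is the $L^2(\sphere)$-orthogonal projection onto $A$, and then argues that the Fischer and $L^2$ projections onto $A_d$ coincide via the harmonic decomposition and the compatible splitting $A_d=\bigoplus_k r^{2k}(A\cap\mathcal H_{d-2k})$. This last splitting does follow from the $\mathfrak{sl}(2,\RR)$-invariance of $A$ (Remark \ref{R:Segal-Shale-Weil}), and your worry about the normalizing scalars is harmless here: $\Delta(r^{2k}h_j)=2k(2k+2j+n-2)r^{2k-2}h_j$ with the coefficient strictly positive for $k\geq 1$, $j\geq 0$, $n\geq 1$. So your argument goes through; it just trades one short inductive identity for a structural detour through spherical harmonics and $\OO(n)$-irreducibility.
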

\begin{proof}
Let $f\in A_k$ and $g\in \R[V]_{d-k}$. Let $g=g_1+g_2$, where $g_1=\Pi_{d-k}(g)$ lies in $A_{d-k}$ and $g_2$ is orthogonal to $A_{d-k}$. By linearity,
\[\Pi_d(fg)=\Pi_d(fg_1)+\Pi_d(fg_2)=fg_1+\Pi_d(fg_2)\]
and therefore it suffices to show that $\Pi_d(fg_2)=0$. But this is true because, for every $Q\in A_{d}$, $\scal{Q,fg_2}=\scal{\hat{f}Q,g_2}$ (by \eqref{E:adjoint}), which is zero since $\hat{f}Q\in A_{d-k}$.

Now assume $A$ is the algebra of homogeneous basic polynomials of a manifold submetry $\sphere(V)\to X$. Since the averaging operator $[\cdot]$ and the Reynolds operator $\Pi$ are idempotent with the same image $A$,  showing that they coincide is equivalent to showing that the kernel of $[\cdot]$ is orthogonal to $A$. So let $g\in \R[V]_d$ such that $[g]=0$, and let $f\in A_d$.
Since the Laplacian and the
\todo{(14): typo fixed}
averaging operator commute, $\Delta[P]=[\Delta P]$ for any $P\in \RR[V]$ (cf. the proof Proposition \ref{P:homog-pol}) and the inductive formula \eqref{E:inductive} for $\bullet_d$ implies that  $[f\bullet_d g] =f\bullet_d [g]$. Therefore
\[ \scal{f,g}=f\bullet_d g= [f\bullet_d g] =f\bullet_d [g]=0\]
because $f\bullet_d g$ is a constant, and hence basic.
\end{proof}

The existence of a Reynolds operator is crucial in Invariant Theory, and we collect below a few standard consequences which we will need later:
\begin{lemma}
\label{L:corReynolds}
Let $A\subset \R[V]$ be a Laplacian algebra. Then
\begin{enumerate}[a)]
\item $A$ is finitely generated.
\todo{(20), part 1: modified proposition by adding new item (b), and deleting old item (d). Comment (16) is now referring to a part that got deleted}
\item Let $F(A)$ be the field of fractions of $A$. Then $A=F(A)\cap \R[V]$.
\item $A$ is integrally closed in its field of fractions.
\end{enumerate}
\end{lemma}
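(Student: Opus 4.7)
The plan: all three properties follow from the existence of the Reynolds operator $\Pi:\RR[V]\to A$ established in Theorem \ref{T:Reynolds}, via arguments that parallel the classical ones for invariants of linearly reductive group actions. The common template is: translate a condition on $f\in \RR[V]$ (or $F(A)$) into a polynomial identity, apply $\Pi$, and use the Reynolds property $\Pi(gh)=g\,\Pi(h)$ for $g\in A$.

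For (a), I would use the Hilbert--Nagata trick. Consider the ideal $I\subseteq \RR[V]$ generated by $\bigoplus_{d\geq 1}A_d$. By Hilbert's basis theorem, $I$ admits finitely many generators, which (since $A$ is graded) we can take to be homogeneous elements $f_1,\ldots,f_k\in A$ of positive degree. I then claim $A=\RR[f_1,\ldots,f_k]$ by induction on degree: given $f\in A_d$ with $d>0$, write $f=\sum_i g_i f_i$ in $\RR[V]$ with $g_i$ homogeneous of degree $d-\deg(f_i)<d$; applying $\Pi$ and using the Reynolds identity yields $f=\Pi(f)=\sum_i \Pi(g_i) f_i$, with each $\Pi(g_i)\in A$ of strictly smaller degree, hence in $\RR[f_1,\ldots,f_k]$ by induction.

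For (b), suppose $f\in \RR[V]$ can be written as $f=p/q$ with $p,q\in A$, i.e.\ $qf=p$. Applying $\Pi$ to both sides and using $p=\Pi(p)$ together with $\Pi(qf)=q\,\Pi(f)$ gives $q\,(\Pi(f)-f)=0$; since $\RR[V]$ is a domain, $\Pi(f)=f$, so $f\in A$. Part (c) then falls out of (b) for free: if $f\in F(A)$ is integral over $A$, then since $A\subseteq \RR[V]$ the element $f$ is \emph{a fortiori} integral over $\RR[V]$; as $\RR[V]$ is a UFD, hence integrally closed in its own field of fractions, we conclude $f\in \RR[V]$, and then (b) gives $f\in F(A)\cap \RR[V]=A$.

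I do not foresee a genuine obstacle: these are textbook consequences of having a Reynolds operator. The only point that needs care is that $\Pi$ respects the grading and is the identity on $A$, both of which are automatic from its definition as the degree-wise orthogonal projection with respect to $\scal{\cdot\,,\,\cdot}_d$.
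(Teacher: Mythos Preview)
Your proposal is correct and follows essentially the same route as the paper: the Hilbert--Nagata argument for (a), clearing denominators and applying the Reynolds operator for (b), and deducing (c) from (b) via the integral closedness of the UFD $\RR[V]$. The only cosmetic difference is that the paper concludes (b) by writing $f/g=\Pi(h)\in A$ directly rather than via the cancellation $q(\Pi(f)-f)=0$, but this is the same computation.
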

\begin{proof}
\begin{enumerate}[a)]
\item Let $A^+\subseteq A$ be the subspace generated by the homogeneous polynomials of positive degree, and let $I$ be the ideal in $\RR[V]$ generated by $A^+$. Since $\RR[V]$ is Noetherian, $I=(\rho_1,\ldots,\rho_k)$ for some $\rho_1,\ldots \rho_k\in A^+$. We claim that $\rho_1,\ldots \rho_k$ generate $A$ as a ring, by induction on the degree. Suppose that they generate $A^{<d}$, and let $f\in A_d$. Since $f\in I$ we can write $f=\sum a_i \rho_i$, where $a_i\in \RR[V]$ can be chosen homogeneous, of degree $\deg(f)-\deg(\rho_i)<d$. Since $f$ and $\rho_i$ belong to $A$, we can apply $\Pi$ to the equation and obtain
\[
f=\Pi(f)=\Pi\left(\sum a_i \rho_i\right)=\sum \Pi(a_i)\rho_i
\]
Since $\Pi(a_i)$ live in $A^{<d}$, by the induction hypothesis they can be written as polynomials in the $\rho_i$'s, and therefore so can $f$. This proves the induction step.

\item Let $\RR(V)$ be the field of fractions of $\RR[V]$. Since $A\subseteq \RR[V]$, clearly $F(A)\subseteq \RR(V)$. Let $f,g\in A$ and $h\in \R[V]$ so that ${f\over g}=h\in F(A)\cap \R[V]$. Then $f=hg$ and applying the Reynolds operator we get $f=g[h]$. Therefore, ${f\over g}=[h]\in A$.

\item Suppose that $\alpha={f\over g}\in F(A)$ is a root of a monic polynomial $P(t)=t^n+\sum h_i t^{n-i}$ in $A[t]$. Then in particular $\alpha\in \RR(V)$ and $P\in \RR[V][t]$. Since $\RR[V]$ is a Unique Factorization Domain,
\todo{(15):``in particular'' removed}
it is integrally closed in its field of fraction, and thus $\alpha\in \RR[V]$. Hence $\alpha\in F(A)\cap \R[V]$ and by the previous point $\alpha\in A$.

%
\end{enumerate}
\end{proof}

\section{Laplacian algebras give rise to submetries}\label{S:lap-alg-subm}

The main goal of the next two sections is to prove the following:
\begin{theorem}
\label{T:sms construction}
Let $A\subset \RR[V]$ be a  Laplacian algebra. Then:
\begin{enumerate}[a)]
\item There exists a \sms $\hat\p_A:\sphere(V)\to \hat X$ whose fibers coincide with the level sets of $A$, on an open and dense set.
\item If furthermore $A$ is maximal, then \emph{all} fibers of $\hat \p_A$ coincide with the level sets of $A$.
\end{enumerate}
\end{theorem}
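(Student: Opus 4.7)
My strategy is to build $\hat\p_A$ in two stages: first as a submetry on an open dense ``regular'' subset of $\sphere(V)$, and then extend to all of $\sphere(V)$ by metric completion. Begin by picking homogeneous generators $\rho_1,\ldots,\rho_k$ of $A$ (available by Lemma \ref{L:corReynolds}(a)) and assembling the polynomial map $\rho=(\rho_1,\ldots,\rho_k)\colon V\to\RR^k$; then the $\sim_A$-equivalence classes on $\sphere(V)$ are exactly the fibers of $\rho|_{\sphere(V)}$. Let $\sphere(V)^{\reg}\subset\sphere(V)$ be the Zariski-open, dense subset where $d\rho|_{\sphere(V)}$ has locally constant, maximal rank; on this set the fibers of $\rho$ are smooth closed submanifolds of constant codimension, and the quotient $\hat X^{\reg}:=\sphere(V)^{\reg}/\!\sim_A$ carries the natural induced pseudometric.

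The central and most delicate step is to show that these regular fibers are \emph{equidistant}, so that the natural projection $\sphere(V)^{\reg}\to\hat X^{\reg}$ is a genuine submetry. This is the hard part of the argument, and it is precisely where the Laplacian hypothesis is used. The Reynolds operator $\Pi$ of Theorem \ref{T:Reynolds} plays the role that the averaging operator plays for singular Riemannian foliations in Proposition \ref{P:homog-pol}; combining it with the identity $\widehat{fg}=\hat f\circ\hat g$ and the stability of $A$ under $\hat f$ and under $\Delta$, I expect to obtain \emph{transnormality} on the regular part: any horizontal geodesic $\gamma$ starting perpendicular to a regular fiber $L_0$ stays perpendicular to every regular fiber it meets. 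Coupling this with the transverse Jacobi field machinery of Proposition \ref{P:Jacfields} and the Homothetic Transformation Lemma then yields equidistance of regular fibers and the desired submetry property on $\sphere(V)^{\reg}$.

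Next, extend $\hat\p_A^{\reg}$ to all of $\sphere(V)$: let $\hat X$ be the metric completion of $\hat X^{\reg}$, and send a singular point $p\in\sphere(V)\setminus\sphere(V)^{\reg}$ to the limit in $\hat X$ of $\hat\p_A^{\reg}(p_n)$ for any regular sequence $p_n\to p$. Well-definedness and continuity of the extended map $\hat\p_A\colon\sphere(V)\to\hat X$ follow directly from the submetry property on the regular part. The remaining delicate issue is smoothness of the extended fibers at singular points; here I would combine the transverse Jacobi field analysis of Section \ref{S:qgeods} with Lytchak's metric submetry criteria \cite{Lyt} and the positive-reach argument thanked for in the acknowledgements, to conclude that each limit fiber is a smooth closed submanifold. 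The outcome is a $C^\infty$ spherical manifold submetry $\hat\p_A\colon\sphere(V)\to\hat X$ whose fibers are contained in $\sim_A$-classes and agree with them on the open dense regular set, establishing part (a).

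For part (b), assume $A$ is maximal and set $B:=\B(\hat\p_A)$, which is a maximal Laplacian algebra by Theorem \ref{T:subm-to-poly}. Every $\rho_i\in A$ is continuous and constant on each regular fiber of $\hat\p_A$, hence (by density and continuity, since every fiber is a limit of regular ones) on every fiber; therefore $A\subseteq B$, and by part (a) the equivalence relations $\sim_A$ and $\sim_B$ agree on $\sphere(V)^{\reg}$. The main task is to propagate this equality to the singular points, ruling out the possibility that a $\sim_A$-class splits into several $\hat\p_A$-fibers at singular points. Here maximality enters: if such a splitting occurred, one could produce a polynomial $g\in B\setminus A$ separating two fibers inside the same $\sim_A$-class; applying $\Pi$ and exploiting the Reynolds identity $\Pi(fg)=f\Pi(g)$ together with the characterization of $A$ as the ring of all polynomials constant on $\sim_A$-classes (maximality) yields a contradiction. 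Hence $\sim_A=\sim_B$ on all of $\sphere(V)$, so the fibers of $\hat\p_A$ are precisely the $A$-level sets.
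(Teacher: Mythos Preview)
Your overall architecture for part (a) matches the paper's, but two of the load-bearing steps are either circular or left as a hope rather than an argument.

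\textbf{Equidistance of regular fibers.} You write that you ``expect to obtain transnormality'' by combining the Reynolds operator with Proposition \ref{P:Jacfields} and the Homothetic Transformation Lemma. But Proposition \ref{P:Jacfields} and Lemma \ref{L:HTL} are results \emph{about} $C^2$-manifold submetries; invoking them before the submetry property is established is circular. The paper's actual mechanism is much more concrete and is where the Laplacian hypothesis really bites: since $A$ is Laplacian, the Gram matrix $\hat B_{ij}=\rho_i\bullet_1\rho_j=\langle\nabla\rho_i,\nabla\rho_j\rangle$ has all its entries in $A$. This single fact (i) makes $V^{\reg}$ a union of $\rho$-level sets, (ii) furnishes an explicit Riemannian metric on $\rho(V^{\reg})$ for which $\rho$ is a Riemannian submersion (Proposition \ref{P:regular}), and (iii) allows the equidistance proof (Proposition \ref{P:reg-equidistant}) to proceed by a direct algebraic argument: horizontality of a minimizing segment $\gamma$ is an algebraic condition that, once true on an interval, holds everywhere; and because the coefficients $a_i$ in $\gamma'=\sum a_i\nabla\rho_i$ can be transported verbatim to any other point of the same fiber, the two projected curves agree as polynomial maps. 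None of this uses the Reynolds operator at all. You also do not address why connected components of a singular fiber have the same dimension; in the paper this is a separate argument (Proposition \ref{P:conn-comp-same-dim}) via the Riccati asymptotics of the shape operator and the fact that the mean curvature is basic (Lemma \ref{L:H-basic}).

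\textbf{Part (b).} Your sketched contradiction does not close. Suppose a $\sim_A$-class in the singular set splits into two $\hat\p_A$-fibers $L_1,L_2$, and take $g\in \hat A:=\mathcal B(\hat\p_A)$ separating them. Maximality of $A$ says exactly that any polynomial constant on \emph{all} $\sim_A$-classes already lies in $A$; but you only know $g$ is constant on $\hat\p_A$-fibers, which on the singular set may be strictly finer than $\sim_A$-classes, so there is nothing to contradict, and the Reynolds identity $\Pi(fg)=f\Pi(g)$ does not help by itself. The paper's trick is to pass to fields of fractions: pick a nonzero $h\in A$ vanishing on $V^{\sing}$ (e.g.\ the product of the maximal minors of $(\rho_i\bullet_1\rho_j)$); then $gh$ is constant on $\sim_A$-classes everywhere (zero on the singular set, and $g$ is $\sim_A$-constant on the regular set), so $gh\in A$ by maximality, whence $g=gh/h\in F(A)$. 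Thus $F(A)=F(\hat A)$, and Lemma \ref{L:corReynolds}(b) gives $\hat A=F(\hat A)\cap\RR[V]=F(A)\cap\RR[V]=A$. This field-of-fractions step is the missing idea in your argument.
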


Let $A\In \R[V]$ denote a  Laplacian algebra, which for the moment is not necessarily maximal. The strategy is to produce a manifold submetry $\p$ from the whole of $V$ to a cone $X=C(Y)$ such that the preimage of the vertex in $C(X)$ is the origin in $V$. Then by equidistance, it follows that $\p$ restricts to the manifold submetry $\p:\sphere(V)\to Y$ we are looking for. In this section, we produce the submetry, and in the next section we prove that the fibers are smooth.

\subsection{Riemannian submersion almost everywhere}\label{SS:riem-subm}

Let $A\subset \R[V]$ be a Laplacian algebra. By Lemma \ref{L:corReynolds}, $A$ is finitely generated, so let $\rho_1,\ldots \rho_k$ be homogeneous generators of $A$, and let $\rho:V\to \R^k$ be the map $\rho(x)=(\rho_1(x),\ldots \rho_k(x))$. 

Let $V^{\reg}$ be the open dense set of $V$ where the rank of $d\rho$ (which equals the dimension of $\textrm{span}(\nabla \rho_1,\ldots \nabla \rho_k)$) is maximal, let $m$ denote such a maximal rank, and denote $V^{\sing}$ the complement of $V^{\reg}$. The set $V^{\reg}$ can be equivalently defined as the set where the matrix $\hat{B}\in \textrm{Sym}^2(A^k)$ given by $\hat{B}_{ij}=\rho_i\bullet_1 \rho_j$ 
\todo{(17): typo fixed.}
has maximal rank (this is because $\hat{B}=(d\rho)\cdot (d\rho)^*$). Because $A$ is Laplacian, the entries of $\hat{B}$ are in $A$, and in particular $V^{\reg}$ is a union of level sets of $\rho$. Moreover, by the Inverse Function Theorem, the restriction of $\rho$ to $V^{\reg}$ is a submersion onto the image. Our first result is:

\begin{proposition}[Riemannian submersion almost everywhere]
\label{P:regular}
The restriction of $\rho$ to $V^{\reg}$ is a Riemannian submersion, for an appropriate choice of metric on $\rho(M^{\reg})$.
\end{proposition}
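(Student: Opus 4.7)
The plan splits into endowing $\rho(V^\reg)$ with a smooth manifold structure and a metric, and then verifying the submersion property. The crucial algebraic input, already recorded in the setup of the proposition, is that the $k\times k$ matrix $\hat B$ with entries $\hat B_{ij}=\rho_i\bullet_1\rho_j=\langle\nabla\rho_i,\nabla\rho_j\rangle$ has all entries in $A$; this follows from the corollary that Laplacian algebras are closed under the higher products $\bullet_k$. Consequently $\hat B$ is constant on the level sets of $\rho$ and factors through $\rho$, and on $V^\reg$ it has constant rank equal to $m$.

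By the constant-rank theorem applied to $\rho|_{V^\reg}$, the image $\rho(V^\reg)$ carries a smooth $m$-dimensional manifold structure; any failure of Hausdorffness is harmless since the property to be checked is local. I would equip it with the unique metric $g^*$ such that, at every $p\in V^\reg$, the restriction $d_p\rho|_{H_p}\colon H_p\to T_{\rho(p)}\rho(V^\reg)$ is a linear isometry, where $H_p=\mathrm{span}(\nabla\rho_1(p),\dots,\nabla\rho_k(p))$ is the orthogonal complement of $\ker d_p\rho$. In coordinates, writing $v=\sum_j c_j\nabla\rho_j(p)\in H_p$ one finds $d_p\rho(v)=\hat B(p)\,c$ and $|v|^2=c^T\hat B(p)\,c$; setting $w=d_p\rho(v)$ and inverting via the Moore--Penrose pseudoinverse yields the closed form
\[|w|^2_{g^*}=w^T\hat B(p)^+\,w.\]
Well-definedness and smoothness of $g^*$ then both follow at once: $\hat B(p)$ depends only on $\rho(p)$ because its entries lie in $A$, so $\hat B(p)^+$ does too, and the pseudoinverse of a matrix of constant rank depends smoothly on the matrix.

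With $g^*$ in hand, $\rho|_{V^\reg}$ is a Riemannian submersion by construction. The only conceptual ingredient is the observation $\hat B_{ij}\in A$, which is where the Laplacian hypothesis intervenes; everything else reduces to the constant-rank theorem and elementary linear algebra on pseudoinverses. I do not expect serious difficulty at this step: the genuine work, which the next section must address, is extending this structure across the singular stratum $V^\sing$ and proving smoothness of the fibers through singular points, culminating in the existence of the \sms $\hat\p_A$ promised by Theorem \ref{T:sms construction}.
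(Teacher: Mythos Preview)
Your proposal is correct and follows essentially the same approach as the paper: both arguments observe that the entries $\hat B_{ij}=\langle\nabla\rho_i,\nabla\rho_j\rangle$ lie in $A$ and hence factor through $\rho$, use the constant-rank theorem to give $\rho(V^{\reg})$ a smooth structure, and then define the metric so that $d_p\rho$ restricted to the horizontal space $H_p$ is an isometry. The only cosmetic difference is that the paper implements this last step by covering $X^{\reg}$ with charts on which $m$ of the vector fields $X_i=\rho_*(\nabla\rho_i)$ are independent and declaring $b(X_i,X_j)=\hat B_{ij}$, whereas you package the same construction globally via the Moore--Penrose pseudoinverse formula $|w|^2_{g^*}=w^T\hat B(p)^+ w$; both yield the same metric.
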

\begin{proof}
From the Inverse Function Theorem, the leaves in $V^{\reg}$ are smooth, and with the same dimension. Moreover, since $\rho_*$ has constant rank at all points in $V^{\reg}$, the image $X^{\reg}=\rho(V^{\reg})$ is a smooth manifold as well, and the map $\rho:V^{\reg}\to X^{\reg}$ is a submersion. We need to prove that there exists a metric in $X^{\reg}$ such that $\rho$ becomes a Riemannian submersion. To produce such a metric, consider the vector fields $X_i=\rho_*(\nabla \rho_i)$ in $X^{\reg}$. Given the standard basis $e_i$ of $\R^k$, we can write $X_i(\rho(p))=\sum_jb_{ij}(\rho(p)) e_j$, where
\[
b_{ij}(\rho(p))=\scal{\nabla \rho_i,\nabla \rho_j}_p=\rho_i\bullet_1\rho_j(p)=\hat{B}_{ij}(p)
\]
 (recall, the entries of $\hat B_{ij}$ belong to $A$ hence can be written as polynomials in $\rho_1,\ldots, \rho_k$).

For indices $1\leq i_1<\ldots< i_m\leq k$  (recall that $m$ is the rank of $d\rho$), let $U_{\{i_1,\ldots i_m\}}\subseteq V^{\reg}$ be  the open set where $X_{i_1},\ldots X_{i_m}$  are linearly independent. For sake of notation let us consider $U_{\{1,\ldots m\}}$. In this case, the matrix $B=(b_{ij})_{i,j=1,\ldots m}$ is nondegenerate and positive definite. On $\rho(U_{\{1,\ldots m\}})$, define the metric
\[
b(X_i,X_j)= b_{ij},\quad \forall i,j=1,\ldots m.
\]
Then,  $\rho$ restricted to $U_{\{1,\ldots, m\}}$ is a Riemannian submersion. Moreover, covering $X^{\reg}$ by open sets of the form $\rho(U_{\{i_1,\ldots i_m\}})$, the metric can be extended on the whole of $X^{\reg}$, and thus $\rho$ is a Riemannian submersion.
\end{proof}

\begin{proposition}\label{P:reg-equidistant}
For any $p_*, q_*\in X^{\reg}=\rho(V^{\reg})$, the fibers $\rho^{-1}(p_*)$ and $\rho^{-1}(q_*)$ are equidistant.
\end{proposition}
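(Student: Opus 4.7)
The plan is to prove the stronger statement that, for every $p\in\rho^{-1}(p_*)$,
\[
d_V\bigl(p,\rho^{-1}(q_*)\bigr)=d_{X^{\reg}}(p_*,q_*),
\]
where $d_{X^{\reg}}$ denotes the distance induced by the Riemannian submersion metric of Proposition~\ref{P:regular}. Since the right-hand side does not depend on $p$, equidistance of the two fibers in $V$ follows immediately. Note at the outset that both fibers lie inside $V^{\reg}$, because $V^{\reg}$ is a union of level sets of $\rho$.

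For the upper bound, the plan is to use horizontal lifts. Given any smooth curve $\alpha\colon[0,1]\to X^{\reg}$ from $p_*$ to $q_*$ and any $p\in\rho^{-1}(p_*)$, the Riemannian submersion $\rho|_{V^{\reg}}$ produces a smooth horizontal-lift ODE on $V^{\reg}$ whose right-hand side has pointwise norm equal to $|\alpha'(t)|$. The unique solution $\tilde\alpha$ starting at $p$ automatically satisfies $\rho\circ\tilde\alpha=\alpha$, and therefore remains in $\rho^{-1}(X^{\reg})=V^{\reg}$ throughout its domain; combined with the uniformly bounded speed, this forces $\tilde\alpha$ to exist on all of $[0,1]$, end in $\rho^{-1}(q_*)$, and satisfy $L(\tilde\alpha)=L(\alpha)$. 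Taking the infimum over $\alpha$ yields $d_V(p,\rho^{-1}(q_*))\le d_{X^{\reg}}(p_*,q_*)$, a bound independent of~$p$.

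The matching lower bound exploits that $\rho|_{V^{\reg}}\colon V^{\reg}\to(X^{\reg},b)$ is $1$-Lipschitz, so any curve $\gamma$ lying entirely in $V^{\reg}$ from $p$ to $\rho^{-1}(q_*)$ projects to a curve in $X^{\reg}$ joining $p_*$ and $q_*$ of length at most $L(\gamma)$; hence $L(\gamma)\ge d_{X^{\reg}}(p_*,q_*)$. To extend this to arbitrary rectifiable paths in $V$, the key observation is that $V^{\sing}$ is the zero locus of certain minors of the Jacobian of $\rho$, hence a proper real-algebraic subset of $V$ of positive codimension. A generic small translation of any path with endpoints in $V^{\reg}$ meets $V^{\sing}$ in only finitely many points, which can be bypassed by short detours with arbitrarily small length cost; passing to the limit yields the lower bound for every path in $V$.

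The main obstacle will be this approximation step transferring the $1$-Lipschitz estimate from $V^{\reg}$ to all of $V$: one has to rely on the polynomial (hence semialgebraic) structure of $V^{\sing}$ to guarantee that detours can be made with negligible length loss, rather than the minimizer being forced through $V^{\sing}$ in a costly way. Once both bounds are in place they sandwich $d_V(p,\rho^{-1}(q_*))$ at the common value $d_{X^{\reg}}(p_*,q_*)$, independent of $p\in\rho^{-1}(p_*)$, which is exactly the desired equidistance.
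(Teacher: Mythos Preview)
Your upper bound via horizontal lifts is fine. The lower bound, however, has a genuine gap, and the approach is quite different from the paper's.

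The detour step breaks down precisely when $V^{\sing}$ has codimension~$1$, and this does occur: for instance, with $A=\R[x^2+y^2,\,x^3-3xy^2]\subset\R[x,y]$ (the $D_3$ invariants) one computes $V^{\sing}=\{y(y^2-3x^2)=0\}$, a union of three lines. In that situation $V^{\reg}$ is disconnected, and a curve joining points in different components of $V^{\reg}$ simply cannot be approximated by curves lying entirely in $V^{\reg}$, no matter what small translation or local surgery you perform. So for such $\gamma$ you never get a competitor curve in $X^{\reg}$, and the inequality $L(\gamma)\ge d_{X^{\reg}}(p_*,q_*)$ is not established. (A secondary issue: for an arbitrary rectifiable path a ``generic translation'' need not reduce the intersection with $V^{\sing}$ to a finite set; but since $V$ is Euclidean you could restrict to straight segments, which are algebraic and do meet $V^{\sing}$ finitely---this, however, does not fix the detour problem.) At a more conceptual level, the equality $d_V(p,\rho^{-1}(q_*))=d_{X^{\reg}}(p_*,q_*)$ you are targeting is essentially the statement that minimizers between regular fibers avoid $V^{\sing}$, i.e.\ that the regular part is convex. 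That is eventually true (Proposition~\ref{P:regular-part-convex}), but only \emph{after} the manifold submetry is constructed; it is not available here and your sketch does not prove it independently.

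The paper's argument avoids all of this by never comparing with $d_{X^{\reg}}$. It takes the straight-line minimizer $\gamma(t)=p_1+tv$ from $p_1$ to $\rho^{-1}(q_*)$; being algebraic, it meets $V^{\sing}$ at only finitely many $t$. By first variation $v$ is horizontal at the endpoint, hence (Riemannian submersion) on a neighbourhood of the endpoint, i.e.\ $v\in\operatorname{span}\{\nabla\rho_i(\gamma(t))\}$ there. This is an algebraic condition in $t$, so it holds for all $t$, in particular at $t=0$: write $v=\sum_i a_i\nabla\rho_i(p_1)$. Now transport: set $v_2=\sum_i a_i\nabla\rho_i(p_2)$ and $\gamma_2(t)=p_2+tv_2$. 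Since the inner products $\langle\nabla\rho_i,\nabla\rho_j\rangle=\rho_i\bullet_1\rho_j$ lie in $A$ and are therefore constant on the fiber, one has $|v_2|=|v|$ and $d\rho(v_2)=d\rho(v)$; hence the polynomial curves $\rho\circ\gamma$ and $\rho\circ\gamma_2$ agree for small $t$, and therefore for all $t$. Thus $\gamma_2$ reaches $\rho^{-1}(q_*)$ with the same length as $\gamma$, giving $d(p_2,\rho^{-1}(q_*))\le d(p_1,\rho^{-1}(q_*))$, and equality follows by symmetry. The key idea you are missing is this \emph{algebraic propagation of horizontality along the minimizer} and the resulting ability to transport it fiber-to-fiber using the same coefficients $a_i$.
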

\begin{proof}
Fixing $p_*, q_*\in X^{\reg}$, let $p_1, p_2\in \rho^{-1}(p_*)$. To prove that $\rho^{-1}(p_*)$ and $\rho^{-1}(q_*)$ are equidistant, it is enough to show that $d(p_1, \rho^{-1}(q_*))=d(p_2, \rho^{-1}(q_*))$. Let $\gamma:[0,\ell]\to V$, $\gamma(t)=p_1+tv$ be a shortest geodesic from $p_1$ to $\rho^{-1}(q_*)$. This geodesic may in principle leave $V^{\reg}$ at some points, but since $V^{\sing}$ is algebraic and $\gamma$ is an algebraic map, it follows that $\gamma(t)\in V^{\reg}$ for all but discretely many $t\in [0,\ell]$. Furthermore, by the first variation of length it follows that $v=\gamma'(\ell)$ is horizontal at $t=\ell$. Since $\rho$ is a Riemannian submersion around $\rho^{-1}(q_*)$ it follows that $v=\gamma'(t)$ is horizontal around $t=\ell$, that is, $v$ is a linear combination of $\nabla \rho_1(\gamma(t)), \ldots, \nabla \rho_k(\gamma(t))$ for all $t$ in a neighborhood of $\ell$ in $[0,\ell]$. However, this is an algebraic condition, thus it holds for all $t\in [0,\ell]$, and in particular $\gamma(t)$ is horizontal around $t=0$.

Write $v=\sum_i a_i\nabla\rho_i(p_1)$, and define $v_2=\sum_i a_i\nabla\rho_i(p_2)$, $\gamma_2(t):=p_2+tv_2$. By construction, $\gamma_2$ is a horizontal geodesic which projects to the same geodesic in $X^{\reg}$ as $\gamma(t)$, for all $t$ small enough. Then the two polynomial maps $P_1,P_2:[0,\ell]\to \RR^k$ given by $P_1(t)=\rho(\gamma(t))$, $P_2(t)=\rho(\gamma_2(t))$ coincide in a neighborhood of $0\in [0,\ell]$, and thus they coincide everywhere. In particular, $\gamma_2$ is a geodesics from $p_2$ to $\rho^{-1}(q_*)$, of the same length as $\gamma$, and therefore $d(p_2,\rho^{-1}(q_*))\leq d(p_1,\rho^{-1}(q_*))$. By inverting the roles of $p_1$ and $p_2$, the other inequality follows, and thus $d(p_2,\rho^{-1}(q_*))= d(p_1,\rho^{-1}(q_*))$.
\end{proof}

\subsection{Submetry everywhere}

By Proposition \ref{P:regular}, a Laplacian algebra $A\subseteq \RR[V]$ produces a Riemannian submersion $\rho^{\reg}:=\rho|_{V^{\reg}}:V^{\reg}\to X^{\reg}$, on an open dense set $V^{\reg}$ of $V$. We want to extend $\rho^{\reg}$ to a manifold submetry $\hat{\rho}$ defined on the whole of $V$. We start with showing that $\rho^{\reg}$ can be extended to a submetry.

\begin{proposition}\label{P:submetry}
There is a metric space $\hat{X}$ containing $X^{\reg}$, and a submetry $\hat{\rho}:V\to \hat{X}$ extending $\rho^{\reg}$.
\end{proposition}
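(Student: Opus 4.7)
\emph{Plan.} The strategy is to put the natural ``inf-distance between fibers'' metric on $X^{\reg}$ and then extend by metric completion. Explicitly, I would set
\[
\hat d(\rho^{\reg}(p),\rho^{\reg}(q)) := d_V(\rho^{-1}(\rho(p)),\,\rho^{-1}(\rho(q)))
\]
for $p,q\in V^{\reg}$, and take $\hat X := \hat\rho(V)$ inside the metric completion $\overline{X^{\reg}}$. Non-degeneracy of $\hat d$ is immediate from closedness of the fibers of $\rho$, and the triangle inequality follows from Proposition \ref{P:reg-equidistant} by the standard ``switch the middle representative'' argument that equidistance enables. With this metric, the bound $\hat d(\rho^{\reg}(p),\rho^{\reg}(q))\le d_V(p,q)$ is obvious, and its reverse---Proposition \ref{P:reg-equidistant} applied to $p$ produces some $q'\in\rho^{-1}(\rho(q))$ with $d_V(p,q')=\hat d(\rho^{\reg}(p),\rho^{\reg}(q))$---together make $\rho^{\reg}: V^{\reg}\to (X^{\reg},\hat d)$ a submetry in its own right.

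To extend, for $p\in V$ and any sequence $p_n\in V^{\reg}$ with $p_n\to p$, the $1$-Lipschitz bound gives $\hat d(\rho^{\reg}(p_n),\rho^{\reg}(p_m))\le d_V(p_n,p_m)\to 0$, so the $\rho^{\reg}(p_n)$ form a Cauchy sequence whose limit in $\overline{X^{\reg}}$ depends only on $p$, not on the chosen approximation. This defines $\hat\rho: V\to \overline{X^{\reg}}$; by construction it extends $\rho^{\reg}$, and $\hat X := \hat\rho(V)$ contains $X^{\reg}$. Since $r^2\in A$, the map $\rho$ is proper, so any Cauchy sequence in $\hat X$ lifts (via approximation by $V^{\reg}$) to a bounded---hence subconvergent---sequence in $V$ whose limit maps into $\hat X$; thus $\hat X$ is closed in $\overline{X^{\reg}}$ and the inherited metric is complete.

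The main obstacle, I expect, is the surjectivity half of the submetry property at singular points: given $p\in V$ and $\hat x\in \bar B_r(\hat\rho(p))$, I need $q\in V$ with $\hat\rho(q)=\hat x$ and $d_V(p,q)\le r$. The plan is to approximate $p$ and $\hat x$ by $p_n,q_n\in V^{\reg}$ with $\hat\rho(p_n)\to\hat\rho(p)$ and $\hat\rho(q_n)\to\hat x$, then apply Proposition \ref{P:reg-equidistant} to the regular pair to produce $q'_n\in \rho^{-1}(\rho(q_n))$ with $d_V(p_n,q'_n)=d_V(\rho^{-1}(\rho(p_n)),\rho^{-1}(\rho(q_n)))\to \hat d(\hat\rho(p),\hat x)\le r$. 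Properness of $\rho$ bounds the sequence $q'_n$, so a subsequence converges to some $q\in V$ with $d_V(p,q)\le r$, and continuity of $\hat\rho$ forces $\hat\rho(q)=\hat x$, closing the argument.
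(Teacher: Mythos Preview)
Your proposal is correct and follows essentially the same route as the paper: define the fiber-distance metric on $X^{\reg}$ using Proposition~\ref{P:reg-equidistant}, complete, extend $\rho^{\reg}$ by limits of regular approximations, and verify the submetry inequality at singular points by the same ``approximate, use equidistance to pick nearby preimages, pass to a subsequential limit'' argument. The only cosmetic difference is that the paper takes $\hat X$ to be the full completion rather than the image $\hat\rho(V)$; your choice sidesteps the (unaddressed in the paper) question of surjectivity onto the completion, and your extra remark on completeness via properness is a harmless bonus.
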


\begin{proof}
On $X^{\reg}$, define the distance function by $d(p_*,q_*)=d_{V}(\rho^{-1}(p_*), \rho^{-1}(q_*))$. Since by Proposition \ref{P:regular} the regular fibers of $\rho$ are equidistant,
this is indeed a distance function. Define $\hat{X}$ as the metric completion of $(X^{\reg},d)$. Then we can extend $\rho^{\reg}$ to $\hat{\rho}:V\to \hat{X}$ by defining, for $p\in V$ given as a limit of a sequence $\{p_i\}_i$ in $V^{\reg}$, $\hat{\rho}(p)=\lim_{i\to \infty} {\rho}(p_i)$ where $p_i$ is a sequence of points in $V^{\reg}$ converging to $p$.

First, we claim that $\hat{\rho}$ is well defined. In fact, if $\{p_i^1\}_i$ and $\{p_j^2\}_j$ are two sequences converging to $p$ then $d_V(p_i^1,p_2^i)\to 0$, therefore $d_{\hat{X}}(\rho(p_i^1), \rho(p_i^2))\to 0$, and by definition of metric completion the two sequences $\rho(p_i^1)$, $\rho(p_i^2)$ define the same limit point. By definition, $\hat{\rho}$ is continuous.

Secondly, we claim that $\hat{\rho}$ is a submetry. Clearly it is distance non-increasing, since it is the completion of $\rho^{\reg}$ and this is distance non-increasing. We thus need to prove that for any $p\in V$ and any $r>0$, $B_r(\hat{\rho}(p))\subseteq \hat{\rho}(B_r(p))$. Let $q_*\in B_r(\hat{\rho}(p))$ and consider sequences $\{q_*^i\}_i\subset X^{\reg}$ converging to $q_*$, $\{p_i\}_i\subset V^{\reg}$ converging to $p$, and pick points $q_i\in \rho^{-1}(q_*^i)$ such that $d(q_i, p_i)=d(q_*^i,\rho(p_i))$. The existence of such points $q_i$ is assured by the fact that the $\rho$-fibers in $V^{\reg}$ are equidistant. Since the points $q_i$ are contained in a ball around $p$, there is a subsequence (which we still denote by $q_i$) converging to some $q\in V$. By construction, $\hat\rho(q)=q_*$ and
\[
d(q,p)=\lim_{i\to\infty} d(q_i, p_i)= \lim_{i\to\infty}d(q_*^i,\rho(p_i))=d(q_*, \hat\rho(p))<r
\]
therefore $q\in B_r(p)$ and thus $q_*\in \hat{\rho}(B_r(p))$.
\end{proof}

\section{Laplacian algebras give rise to manifold submetries}\label{S:algebras-to-smss}

The goal of this section is to show that the submetry $\hat{\rho}:V\to \hat{X}$ defined in the previous section is in fact a manifold submetry, thus finishing the proof of Theorem \ref{T:sms construction}(b). For this, we need to show that each singular fiber \todo[color=cyan]{typo fixed} of $\hat{\rho}$ is a smooth embedded submanifold (all of whose connected components have the same dimension). This will be done in three steps: First, using the transverse Jacobi field equation (introduced in \cite{Wilking07}) we will show that $L$ is a disjoint union of smooth immersed submanifolds. Second, we will show that $L$ has positive reach, which implies that $L$ is a disjoint union of smooth embedded submanifolds. Third, we will show that the connected components of $L$ have the same dimension.

\begin{proposition}\label{P:sing-immersed}
For any singular fiber $L'$ of $\hat{\rho}:V\to \hat{X}$, there is a regular fiber $L$ and a differentiable map $\phi: L\to V$ with locally constant rank and $\phi(L)=L'$.
\end{proposition}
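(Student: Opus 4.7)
I would construct $\phi$ by horizontal-lifting a minimizing geodesic from a nearby regular fiber. Fix $p'\in L'$ and pick $q_0\in V^{\reg}$ close to $p'$ so that the segment $\gamma(t)=q_0+tv_0$, $t\in[0,\ell]$, is a minimizing horizontal geodesic from $q_0$ to $p'$. Set $L=\hat\rho^{-1}(\hat\rho(q_0))$, let $\hat\gamma=\hat\rho\circ\gamma$, and at each $q\in L$ let $v(q)$ be the unique horizontal vector (for the Riemannian submersion $\rho^{\reg}$) with $d_q\rho(v(q))=\hat\gamma^+(0)$. Since the horizontal distribution of $\rho^{\reg}$ is smooth on $V^{\reg}$, $v$ is smooth on $L$, and the candidate map is
\[
\phi(q) := q + \ell\,v(q).
\]

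To see $\phi(q)\in L'$, I would combine the fact that horizontal geodesics in a Riemannian submersion project to geodesics in the base with the polynomial nature of the setup: the curve $c_q(t)=q+tv(q)$ is horizontal as long as it stays in $V^{\reg}$, projects to a geodesic in $X^{\reg}$ with the same initial data as $\hat\gamma$, and hence by uniqueness of geodesics in the smooth manifold $X^{\reg}$ satisfies $\rho(c_q(t))=\hat\gamma(t)$ on an initial interval; both sides are polynomial in $t$, so the equality extends globally. Letting $t\to\ell^-$ through regular points and using continuity of $\hat\rho$ yields $\hat\rho(\phi(q))=p_*$. For the reverse inclusion $\phi(L)\supseteq L'$ I would use equidistance of the $\hat\rho$-fibers: for each $p''\in L'$ there is a $q''\in L$ realizing $d(q'',p'')=\ell$, and the minimizing segment is horizontal with projection a minimizing geodesic in $\hat X$ from $q_*=\hat\rho(q_0)$ to $p_*$. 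Choosing $q_0$ close enough to $p'$ that this minimizer in $\hat X$ is unique, the segment must agree with $c_{q''}$, whence $p''=\phi(q'')$.

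For the locally constant rank I would analyze $d_q\phi$ via Jacobi fields of variations through horizontal lifts. For a curve $q(s)\in L$, the field $J(t)=\partial_s c_{q(s)}(t)|_{s=0}$ is a Jacobi field along $c_q$, and the assignment $q'(0)\mapsto J$ is injective, producing a space $W_q$ of Jacobi fields of dimension $\dim L$. By Proposition \ref{P:Jacfields} applied to $\rho^{\reg}$, $W_q$ is isotropic with $W_q(t)=T_{c_q(t)}L_t$ whenever $c_q(t)\in V^{\reg}$. The rank of $d_q\phi$ is exactly $\dim W_q(\ell)$, and since all $c_q$ ($q\in L$) are horizontal lifts of the single base geodesic $\hat\gamma$, Wilking's transverse Jacobi equation identifies the focal contribution of $W_q$ at $t=\ell$ with a quantity depending only on $\hat\gamma$ and the A-tensor of $\rho^{\reg}$ along it, independent of the lift $q$. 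Hence the rank of $d_q\phi$ is constant on $L$.

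The main obstacle I foresee is the passage to the singular endpoint $t=\ell$, where $c_q(\ell)\in V^{\sing}$ and the transverse Jacobi machinery is only directly available on the regular part. I would address this using the isotropic Jacobi field formalism of Appendix \ref{A:Lagr}, in particular upper semicontinuity of the focal dimension for isotropic families, combined with the polynomial continuation argument used for the image step, to justify that the constant-rank conclusion at regular times passes across the singular time.
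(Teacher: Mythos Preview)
Your construction of $\phi$ and the argument for $\phi(L)=L'$ are essentially the paper's, modulo your surjectivity argument relying on uniqueness of the minimizing geodesic in $\hat X$ (the paper instead uses a simpler compactness/continuity argument that avoids this issue).

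The genuine gap is in the locally constant rank step. Your claim that ``Wilking's transverse Jacobi equation identifies the focal contribution of $W_q$ at $t=\ell$ with a quantity depending only on $\hat\gamma$'' is not what the transverse Jacobi equation says: it describes the \emph{quotient} $\Lambda/W$, not $W$ itself. The focal function $f_{W_q}(\ell)=\dim\ker d_q\phi$ is not directly a base quantity, and the $A$-tensor of a Riemannian submersion is certainly not lift-independent in general. Moreover, for merely isotropic families Proposition~\ref{P:continuity} gives only an \emph{inequality} for the index under perturbation, so the semicontinuity you invoke at the end yields at best one direction, not local constancy.

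The paper fixes this by completing each $W_p$ to a \emph{Lagrangian} $\Lambda_p$ (adding the Jacobi fields vanishing at $t=0$ with $J'(0)$ horizontal), and then using the index decomposition $\ind_{[0,1+\epsilon]}\Lambda_p=\ind W_p+\ind(\Lambda_p/W_p)$ from \eqref{E:focal-sum}. For Lagrangians, Proposition~\ref{P:continuity} gives \emph{equality}, so $p\mapsto\ind\Lambda_p$ is locally constant; and by Example~\ref{E:example} the quotient $\Lambda_p/W_p$ is identified with Jacobi fields along $\gamma_*$ in $X^{\reg}$ vanishing at $0$, so $\ind(\Lambda_p/W_p)$ is independent of $p$. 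Subtracting gives $\ind W_p=\dim\ker d_p\phi$ locally constant. The Lagrangian extension is the missing idea in your proposal.
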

\begin{proof}
Fixing a singular fiber $L'$ and a point $q\in L'$, take any regular leaf $L$, let $\gamma:[0, 1]\to V$, be a minimizing geodesic from $L$ to $q$, and let $p:=\gamma(0)$. Up to substituting $L$ with a regular fiber through a later time $\gamma(t)$, we can suppose that all fibers through $\gamma(t)$, $t\in(0,1)$ are regular. Then $\gamma'(0)$ is perpendicular to $L$ at $0$, thus $\gamma'(0)=\sum a_i \nabla\rho_i(p)$ for some constants $a_i$, and we can define the normal vector field $X=\sum_i a_i \nabla \rho_i$ along $L$, the map $\Phi:L\times \RR\to V$ by $\Phi_t(p')=p'+t X(p')$, and the map $\phi=\Phi_1$.

We first claim that $\phi(L)=L'$. On the one hand, the geodesics $\gamma_p(t):=\Phi_t(p)$ all project to the same geodesic in $X^{\reg}$ near $t=0$, then they meet the same geodesics for all $t$ (see the proof of Proposition \ref{P:reg-equidistant}) and therefore $\phi(L)\subseteq L'$. On the other hand, since $\hat{\rho}$ is a submetry and $d(\hat{\rho}(\Phi_t(L)), \hat{\rho}(L'))\to 0$ as $t\to 1$, for any $q'\in L'$ there is a sequence of times $t_i\to 1$ and points points $p_i\in \Phi_{t_i}(L)$ converging to $q'$. By the continuity of $\Phi$, it follows that $q'\in \Phi_1(L)=\phi(L)$ and thus $\phi(L)=L'$.
\\

We are left to prove that $\phi$ has locally constant rank. Equivalently, we can prove that $\ker d\phi$ is locally constant. For every $p\in L$, define $\gamma_p(t)=\Phi_t(p)$, and $W_p$ the space of Jacobi fields $J_v(t)=d_{\gamma_p(t)}\Phi_t(v)$, for $v\in T_pL$. Notice that these really are Jacobi fields, since they can be written also as $J_v(t)={d\over ds}\big|_{s=0}\gamma_{\alpha(s)}(t)$, where $\alpha$ is a curve in $L$ with $\alpha'(0)=v$. Furthermore, for any $J_1, J_2\in W_p$ and any $t\in (0,1)$ we have $J_i'(t)=S_{\gamma_p'(t)}J_i(t)$ where $S_{\gamma_p'(t)}$ is the shape operator of $\Phi_t(L)$, and thus
\[
\scal{J_1'(t),J_2(t)}-\scal{J_1(t), J_2'(t)}=\scal{S_{\gamma_p'(t)}J_1(t),J_2(t)}-\scal{J_1(t), S_{\gamma_p'(t)}J_2(t)}=0
\]
It follows that $W_p$ is an isotropic space (see Appendix \ref{A:Lagr}). Furthermore,  by construction the focal function $f_{W_p}(t)$ is zero for $t\in (0,1)$ and equal to $\dim\ker d_p\phi$ for $t=1$.

For any $p\in L$, the space $W_p$ can be extended to a Lagrangian space of Jacobi fields
\[
\Lambda_p=W_p\oplus \{J\mid J(0)=0, J'(0)\perp T_pL\oplus \gamma_p'(0)\},
\]
which corresponds to the space of normal Jacobi fields along $\gamma_p$, given as variations by horizontal geodesics through $L$. 

Set $\epsilon$ small enough, that the fibers of $\hat{\rho}$ through $\gamma_p(t)$ have constant dimension in $(1, 1+\epsilon)$, and set the function $\iota:L\to \RR$ given by $\iota(p)=\ind_{[0,1+\epsilon]}\Lambda_p$. By Equation \eqref{E:focal-sum}, we have
\[
\iota(p)=\iota^v(p)+\iota^h(p),\qquad \iota^v(p):=\ind_{[0,1+\epsilon]}W_p,\quad \iota^h(p):=\ind_{[0,1+\epsilon]}\Lambda_p/W_p.
\]
We call $\iota^v$ the \emph{vertical index} and $\iota^h$ the \emph{horizontal index}. By the discussion above, $\iota^v(p)=\dim\ker\phi$, so in order to prove the final claim it is enough to prove that $\iota-\iota^h$ is locally constant on the fibers of $\hat{\rho}$.

On the one hand, since $\iota$ denotes the index of a Lagrangian space, it follows from Proposition \ref{P:continuity} in the Appendix \ref{A:Lagr}, that this function is locally constant. On the other hand, for any $p\in L$ the Lagrangian space $\Lambda_p/W_p$ can be identified with the (isotropic) space of Jacobi fields along $\gamma_*|_{[0,1+\epsilon]\setminus\{1\}}$ in $X^{\reg}$ which vanish at $0$ (See Example \ref{E:example}). In particular, $\iota^h(p)$ does not depend on $p\in L$.
%
%
%
\end{proof}

The second step it to show that each connected component of a singular fiber $L'$ is in fact embedded.

\begin{proposition}\label{P:sing-embedded}
For any singular fiber $L'$ of $\hat{\rho}$, and any $p\in L'$, there is a neighborhood $U_p$ of $p$ in $V$ such that $U_p\cap L'$ is a smooth manifold.
\end{proposition}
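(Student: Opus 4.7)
The plan is to promote the constant-rank immersed structure supplied by Proposition~\ref{P:sing-immersed} to a local embedding via a positive reach argument. Recall that Proposition~\ref{P:sing-immersed} writes $L' = \phi(L)$ for a map $\phi:L\to V$ of locally constant rank. Hence, for each preimage $p_0 \in \phi^{-1}(p)$, the Constant Rank Theorem produces a neighborhood $U_{p_0}$ of $p_0$ in $L$ whose image $S_{p_0}:=\phi(U_{p_0})$ is a smoothly embedded submanifold of $V$ through $p$. Thus, in a neighborhood of $p$, $L'$ is a union of such submanifolds $\{S_{p_0}\}_{p_0\in \phi^{-1}(p)}$, and the task reduces to showing that the various $S_{p_0}$'s must coincide as germs at $p$.

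The core step is to establish that $L'$ has positive reach at $p$: there exists $\epsilon > 0$ such that every $v \in B_\epsilon(p)\setminus L'$ admits a unique nearest point in $L'$. I would first treat $v \in V^{\reg}$. A minimizing geodesic $\gamma$ from $v$ to a nearest point $p' \in L'$ is perpendicular to the regular fiber $L_v$ through $v$ by the first variation of length, hence is horizontal for the Riemannian submersion $\hat\rho|_{V^{\reg}}$ from Proposition~\ref{P:regular}. The composition $\hat\rho\circ \gamma$ is then a minimizing geodesic in $\hat X$ from $\hat\rho(v)$ to $q_* := \hat\rho(L')$. Two distinct nearest points $p_1,p_2\in L'$ for the same $v$ would produce two such horizontal geodesics of equal length; once $\epsilon$ is shrunk enough that the minimizing geodesic in $\hat X$ from $\hat\rho(v)$ to $q_*$ is unique, the projections $\hat\rho\circ \gamma_i$ coincide as curves, and uniqueness of horizontal lifts starting at $v$ forces $\gamma_1=\gamma_2$ and hence $p_1=p_2$. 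Points $v \in V^{\sing}\setminus L'$ close to $p$ are then handled by approximating $v$ by a sequence in $V^{\reg}$ and invoking upper semicontinuity of the nearest-point set.

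Having positive reach in hand, I would argue that any two branches $S_{p_1}\ne S_{p_2}$ meeting at $p$ must agree as germs. If their tangent spaces at $p$ differ, the locus of points equidistant from $S_{p_1}$ and $S_{p_2}$ is a smooth hypersurface through $p$ meeting $V^{\reg}\setminus L'$, and any such point has two distinct nearest points in $L'$, contradicting positive reach. If the tangent spaces agree but $S_{p_1}\ne S_{p_2}$ as germs, a Taylor expansion of the defining graphs in the direction of first nontrivial disagreement produces the same kind of equidistant point. Therefore all branches through $p$ coincide, and $U_p\cap L'$ is a smoothly embedded submanifold of $V$ near $p$.

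The hardest part will be making the positive reach step rigorous without circularity: in particular, establishing uniqueness of minimizing geodesics in $\hat X$ from $\hat\rho(v)$ to $q_*$ for $v$ close to $p$ (for which one would like to exploit the cone structure of $\hat X$ and Proposition~\ref{P:facts}), and cleanly extending uniqueness of nearest points from $V^{\reg}$ to all of $B_\epsilon(p)\setminus L'$. The referee-suggested simplification noted in the acknowledgements presumably streamlines exactly this core step.
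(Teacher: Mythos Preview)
Your overall strategy---immersed structure from Proposition~\ref{P:sing-immersed} plus positive reach implies locally embedded---is exactly the paper's. The two differences are in execution, and in both places the paper takes a shorter route by citing known results rather than arguing from scratch.

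\textbf{Positive reach.} You attempt to prove positive reach directly via uniqueness of horizontal lifts, and you correctly flag this as the weak point: at this stage $\hat X$ is only the metric completion of $X^{\reg}$, so uniqueness of minimizing geodesics from $\hat\rho(v)$ to $q_*$ is not available without further work, and the passage from $V^{\reg}$ to general $v$ by ``upper semicontinuity'' is not rigorous as stated. The paper bypasses all of this: since $\hat\rho$ is already known to be a submetry (Proposition~\ref{P:submetry}), it invokes Proposition~12.10 of \cite{Lyt}, which states that fibers of any submetry from a Riemannian manifold have positive reach. This is precisely the referee simplification you anticipated.

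\textbf{From positive reach to embedded.} Your equidistant-hypersurface argument is plausible but the case of coinciding tangent spaces with higher-order disagreement would need care. The paper's argument is cleaner: by Federer (Theorem~4.8(12)), a set of positive reach has at each point a tangent cone that is \emph{convex}. Combined with Proposition~\ref{P:sing-immersed}, which forces the tangent set at $p$ to be a union of linear subspaces, convexity immediately gives a single linear subspace. Then Proposition~1.4 of \cite{Lytchak2005} upgrades this to an injectively immersed $C^{1,1}$ manifold, and the smooth immersed structure finishes the job.

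In summary: your plan is sound, but the gaps you identify are real, and the paper's fix is to cite \cite{Lyt} for positive reach and Federer for the convex-cone consequence, rather than reproving either.
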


\begin{proof}
\todo{(18): Proof simplified}
Fix a singular fiber $L'$. 
By Proposition \ref{P:sing-immersed}, every connected component of $L'$ is an immersed submanifold, thus the tangent space at every point is a union of vector spaces.
 On the other hand, since $\hat{\rho}$ is a submetry, by Proposition 12.10 of \cite{Lyt} every fiber has \emph{positive reach}, that is, for every $p\in L'$ and every $\epsilon$ small enough, there is a map $Upt: B_{\epsilon}(p)\to L'$ such that $Upt(q)$ is the unique point in $L'$ minimizing the distance between $q$ and $L'$. It is well known (cf. \cite{Federer}, Part (12) of Theorem 4.8) that a set of positive reach has a tangent space at each point, which is a convex cone. In particular, each tangent space of $L'$ consists of a single vector space. By Proposition 1.4 in \cite{Lytchak2005} it follows that $L'$ is an injectively immersed $C^{1,1}$ manifold. Since $L'$ is also a closed immersed smooth manifold, it follows that it is embedded as well.

\end{proof}

It remains to prove that different connected components of the same fiber have the same dimension.

\begin{lemma}\label{L:H-basic}
The mean curvature $H$ of the regular fibers of $\hat{\rho}:V\to \hat{X}$ descends to a vector field on $X^{\reg}$.
\end{lemma}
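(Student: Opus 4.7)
The plan is to transport the Laplacian condition on $A$ across the Riemannian submersion $\hat\rho|_{V^{\reg}}: V^{\reg}\to X^{\reg}$ provided by Proposition \ref{P:regular}. The key tool is the standard formula for the Laplacian of a basic function under a Riemannian submersion: for every basic smooth $f=\bar f\circ\hat\rho$ on $V^{\reg}$,
\[
\Delta_V f\;=\;(\Delta_{X^{\reg}}\bar f)\circ\hat\rho\;-\;\langle H,\nabla f\rangle,
\]
where $H$ is the mean curvature vector of the regular fibers. One derives this by expanding $\Delta_V f=\mathrm{div}(\nabla f)$ in an orthonormal frame $\{e_i\}\cup\{v_j\}$ adapted to the submersion: the horizontal trace yields $(\Delta_{X^{\reg}}\bar f)\circ\hat\rho$ by O'Neill's formulas (since $\nabla f$ is the horizontal lift of $\nabla\bar f$), while the vertical trace produces $-\langle H,\nabla f\rangle$ through the identity $\langle\nabla_{v_j}\nabla f,v_j\rangle=-\langle\nabla_{v_j}v_j,\nabla f\rangle$, obtained by differentiating the orthogonality $\langle v_j,\nabla f\rangle=0$ along each vertical unit $v_j$ and summing.

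I would then apply this formula to homogeneous generators $\rho_1,\dots,\rho_k$ of $A$, which exist by Lemma \ref{L:corReynolds}. Because $A$ is Laplacian, $\Delta_V\rho_i\in A$ and hence is basic; since $(\Delta_{X^{\reg}}\bar\rho_i)\circ\hat\rho$ is basic by construction, rearranging the displayed identity forces $\langle H,\nabla\rho_i\rangle$ to be basic on $V^{\reg}$ for every $i$.

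To conclude, I would invoke the definition of $V^{\reg}$: it is exactly the open dense set where $\nabla\rho_1,\dots,\nabla\rho_k$ span the horizontal distribution $\Hor$ at every point. The mean curvature $H$ is a horizontal vector field (being normal to the fiber), and each $\nabla\rho_i$ is a basic horizontal vector field whose $\hat\rho$-projection on $X^{\reg}$ is $\nabla\bar\rho_i$. Since $H$ is pointwise determined by the inner products $\langle H,\nabla\rho_i\rangle$, and these are basic, $H$ itself must be basic as a horizontal vector field. Therefore $d\hat\rho(H)$ is well-defined and gives the desired vector field $\bar H$ on $X^{\reg}$, characterized by $\langle\bar H,\nabla\bar\rho_i\rangle\circ\hat\rho=\langle H,\nabla\rho_i\rangle$.

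I do not anticipate a substantive obstacle: once one commits to the submersion Laplacian formula, the argument is essentially an algebraic manipulation together with the defining spanning property of $V^{\reg}$. The only delicate points are the sign bookkeeping in the Laplacian decomposition and the harmless observation that gradients of basic functions project to gradients of the corresponding functions on the base.
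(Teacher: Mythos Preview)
Your argument is correct and follows essentially the same route as the paper: both use the submersion Laplacian identity to deduce that $\langle H,\nabla f\rangle$ is basic for every $f\in A$ (the paper states it for all $f\in A$, you for the generators, which is equivalent), and then infer that $H$ descends. The sign discrepancy between your formula and the paper's is a harmless convention choice for $H$, and you spell out the final step (that the $\nabla\rho_i$ span $\Hor$ on $V^{\reg}$, so basicness of the inner products forces basicness of $H$) more explicitly than the paper does.
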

\begin{proof}
It is enough to show that for every $f\in A$, $\scal{H, \nabla f}$ is constant along the fibers of $\rho^{\reg}:V^{\reg}\to X^{\reg}$. Since $\rho^{\reg}$ is a Riemannian submersion and $f$ is constant along its fibers, there is a smooth function $\underline{f}\in C^\infty(X^{\reg})$ such that $f=\underline{f}\circ \rho$. Then straightforward computations (cf. \cite{AR}) show that
\[
\Delta f= (\Delta_{X^{\reg}} \underline{f})\circ \rho +\scal{H, \nabla f}.
\]
Since $A$ is Laplacian, $\Delta f$ is also constant along $L$, and therefore so is $\scal{H, \nabla f}$.
\end{proof}

\begin{proposition}\label{P:conn-comp-same-dim}
Any two connected components of a same fiber of $\hat{\rho}:V\to \hat{X}$ have the same dimension.
\end{proposition}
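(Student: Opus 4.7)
I argue by contradiction, using Lemma \ref{L:H-basic} (the mean curvature is basic) together with the isotropic Jacobi field structure from Proposition \ref{P:Jacfields}. Suppose $L' = \hat{\rho}^{-1}(q_*)$ has connected components $C_1, C_2$ with $\dim C_1 \neq \dim C_2$. Fix $q_1 \in C_1$ and apply the construction of Proposition \ref{P:sing-immersed}: choose a regular fiber $L$, a minimizing geodesic from some $p \in L$ to $q_1$ of the form $\gamma(t) = p + tX(p)$ with $X = \sum_i a_i \nabla \rho_i$, and set $\phi(p') := p' + X(p')$. Since $\phi(L) = L'$ (that proof uses the submetry property to cover \emph{all} of $L'$, not only $C_1$), there is also a point $p_2 \in L$ with $\phi(p_2) \in C_2$. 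Then $\gamma_1 := \gamma$ and $\gamma_2(t) := p_2 + tX(p_2)$ are two horizontal geodesics which, arguing as in the proof of Proposition \ref{P:reg-equidistant}, project to the \emph{same} geodesic $\gamma_*$ in $\hat X$ ending at $q_*$ at time $t = 1$.

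Next, I compare the mean curvatures of the regular fibers along the $\gamma_i$ near $t = 1$. Proposition \ref{P:Jacfields} provides an isotropic space $W_i$ of Jacobi fields along $\gamma_i$ with $W_i(t) = T_{\gamma_i(t)} L_t$, of dimension $d := \dim L$ for $t \in (0,1)$ and of dimension $\dim C_i$ at $t = 1$. Thus exactly $k_i := d - \dim C_i$ Jacobi fields in $W_i$ vanish at $t = 1$. A standard Riccati-type computation based on the identity $S_{\gamma_i'(t)} J(t) = J'(t)$ for $J \in W_i$ (which holds because the Jacobi fields in $W_i$ arise from variations by horizontal geodesics emanating from a regular fiber) shows that each such vanishing Jacobi field contributes an eigenvalue $\sim -1/(1-t)$ to $S_{\gamma_i'(t)}|_{W_i(t)}$, while the remaining eigenvalues stay bounded. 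Hence
\[
\langle H_i(t), \gamma_i'(t)\rangle = \operatorname{tr}\bigl(S_{\gamma_i'(t)}|_{W_i(t)}\bigr) = -\frac{k_i}{1-t} + O(1), \qquad t \to 1^{-}.
\]

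Lemma \ref{L:H-basic} now forces $d_{\gamma_1(t)}\hat\rho(H_1(t)) = d_{\gamma_2(t)}\hat\rho(H_2(t))$ in $T_{\gamma_*(t)} X^{\reg}$, and since $\hat\rho$ is a Riemannian submersion on the regular part (Proposition \ref{P:regular}) sending $\gamma_i'(t)$ to $\gamma_*'(t)$, I conclude that $\langle H_1(t), \gamma_1'(t)\rangle = \langle H_2(t), \gamma_2'(t)\rangle$ for every $t \in (0,1)$. Comparing the divergent parts as $t \to 1^-$ gives $k_1 = k_2$, i.e.\ $\dim C_1 = \dim C_2$, the desired contradiction. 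The main technical point to verify is the Riccati asymptotic for $\operatorname{tr} S_{\gamma_i'(t)}|_{W_i(t)}$; this is routine once one chooses a basis of $W_i$ adapted to the focal structure at $t = 1$, and the leading signs agree for $i = 1, 2$ because both geodesics approach $L'$ from the regular side and the vanishing Jacobi fields have simple zeros at $t = 1$.
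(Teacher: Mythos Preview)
Your approach is essentially the same as the paper's: find two horizontal geodesics $\gamma_1,\gamma_2$ from the regular part into the two components that project to the same curve in the base, invoke the isotropic Jacobi space $W_i$ along each, use the Riccati asymptotics to read off the focal multiplicity $k_i$ from the blow-up rate of $\operatorname{tr} S_i(t)$, and conclude $k_1=k_2$ from Lemma~\ref{L:H-basic}. The only genuine difference is how the two geodesics are produced: the paper factors $\hat{\rho}$ through the connected-fiber submetry $\hat{\rho}_0:V\to\hat{X}_0$ (which is a manifold submetry by Proposition~\ref{P:sing-embedded}) and then appeals to Lemma~\ref{L:Qgeod-uniq.}, whereas you use the surjection $\phi:L\to L'$ from Proposition~\ref{P:sing-immersed} to find a preimage $p_2\in L$ of a point of $C_2$ and set $\gamma_2(t)=p_2+tX(p_2)$. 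Your route is arguably more direct.

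There is one point you should tighten. You invoke Proposition~\ref{P:Jacfields} to obtain $W_i(t)=T_{\gamma_i(t)}L_t$ and in particular $W_i(1)=T_{\gamma_i(1)}C_i$. But Proposition~\ref{P:Jacfields} is stated for a $C^2$-\emph{manifold} submetry, and at this stage $\hat{\rho}$ is not yet known to be one (that is precisely what this proposition establishes). The paper resolves this by first observing that $\hat{\rho}_0$, whose fibers are the connected components of the $\hat{\rho}$-fibers, \emph{is} a manifold submetry thanks to Proposition~\ref{P:sing-embedded}; Proposition~\ref{P:Jacfields} then applies to $\hat{\rho}_0$ and yields $W_i(\ell)=T_{p_i}L_i'=T_{p_i}C_i$. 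Without this factorization you only get the inclusion $W_i(1)\subseteq T_{\gamma_i(1)}C_i$ (from the construction in Proposition~\ref{P:sing-immersed}), which is not enough: equality of the $k_i$ would then only give $\dim W_1(1)=\dim W_2(1)$, not $\dim C_1=\dim C_2$. The fix is a one-liner---pass to $\hat{\rho}_0$---but it should be said.
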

\begin{proof}
This is clearly true for fibers in $V^{\reg}$, thus we focus on the singular fibers. 

By Theorem 10.1 in \cite{Lyt}, it follows that the submetry $\hat{\rho}:V\to \hat{X}$ factors as $V\stackrel{\hat{\rho}_0}{\longrightarrow} \hat{X}_0\to \hat{X}$ where the fibers of $\hat{\rho}_0$ are the connected components of the fibers of $\hat \rho$, and $\hat{X}_0\to \hat{X}$ is a submetry with discrete fibers. By Proposition \ref{P:sing-embedded}, the submetry $\hat{\rho}_0$ is in fact a manifold submetry.

Let $p_1, p_2$ be points lying in different connected components of a singular fiber $L'=\hat{\rho}^{-1}(p_*)$, and let $L'_1, L'_2 \subseteq L'$ the fibers of $\hat{\rho}_0$ containing $p_1$ and $p_2$, respectively. Since $\hat{\rho}_0$ is a manifold submetry, it follows from Lemma \ref{L:Qgeod-uniq.} that there are horizontal geodesics $\gamma_1, \gamma_2:[0,\ell]\to V$ such that $\hat\rho(\gamma_1)=\hat\rho(\gamma_2)$, $\gamma_i|_{[0,\ell)}\subset V^{\reg}$, and $\gamma_i(\ell)=p_i$, $i=1,2$.

By Proposition \ref{P:Jacfields}, there are families of Jacobi fields $W_1, W_2$ along $\gamma_1$ and $\gamma_2$ respectively, such that $W_i(t)=\{J(t)\mid J\in W_i\}$ is the tangent space to the fiber (of $\hat{\rho}$ or $\hat{\rho}_0$, it is the same) through $\gamma_i(t)$. Therefore, it is enough to prove that $\dim W_1(\ell)=\dim W_2(\ell)$.

Recall that $W_i$ are isotropic subspaces (cf. Appendix), and therefore for every $t\in [0,\ell)$, $\dim W_i(t)=\dim W_i=\dim V - m$, where $m$ denotes the rank of $\rho^{\reg}$. Furthermore, for every $t\in [0,\ell)$ there is a symmetric endomorphism $S_i(t):W_i(t)\to W_i(t)$ such that $S_i(t)J(t)=pr_{W_i(t)}J'(t)$ for every $J\in W_i$, where $pr_{W_i(t)}$ denotes the projection onto $W_i(t)$. This endomorphism coincides with the shape operator of the leaf through $\gamma_i(t)$, in the direction of $\gamma_i'(t)$, and it satisfies the Riccati equation
\[
S'_i(t)+S_i^2(t)=0,
\]
where $S'_i(t):W(t)\to W(t)$ is the covariant derivative of $S_i(t)$. By standard theory of solutions to the Riccati equation
\todo{(19): added reference for this fact}
(cf. Remark 1, and Proposition of \cite{Heintze1990}), close to $t=\ell$ the operator $S_i(t)$ becomes asymptotic to
\[
S_i(t)\sim \left(\begin{array}{cc}{1\over \ell-t} I_{d_i} &  \\   & \tilde{S}_i(t)\end{array}\right)
\]
where $d_i=\dim\{J\in W_i\mid J(\ell)=0\}=\dim W_i-\dim W_i(\ell)$, and $\tilde{S}_i(t)$ is bounded as $t\to \ell^-$. In particular, close to $t=\ell$ we have
\[
\scal{H(\gamma_i(t)),\gamma_i'(t)}=\operatorname{tr}(S_i(t))= {d_i\over \ell-t}+O(1)
\]
On the other hand, since $\gamma_1, \gamma_2$ project to the same geodesic in $X^{\reg}$ and, by Lemma \ref{L:H-basic}, $H$ projects to a vector field in $X^{\reg}$, it follows that $\scal{H(\gamma_1(t)),\gamma_1'(t)}=\scal{H(\gamma_2(t)),\gamma_2'(t)}$ and thus $d_1=d_2$. Since $\dim L_i'=\dim W_i(\ell)=n-m-d_i$, we have the result.
\end{proof}

By collecting the results in the previous section and this one, we obtain a proof of Theorem \ref{T:sms construction}.

\begin{proof}[Proof of Theorem \ref{T:sms construction}]
a) Given a Laplacian algebra $A\subseteq \RR[V]$, by Corollary \ref{L:corReynolds} there are finitely many functions $\rho_1,\ldots, \rho_k$ generating $A$. Let $\rho=(\rho_1,\ldots \rho_k):V\to \RR^k$, and define the submetry $\hat{\rho}:V\to \hat{X}$ as in Proposition \ref{P:submetry}. By Proposition \ref{P:sing-embedded}, the fibers of $\hat{\rho}$ are unions of smoothly embedded submanifolds, and by Proposition \ref{P:conn-comp-same-dim} the connected components of each fiber have the same dimension. Therefore, $\hat{\rho}$ is a manifold submetry. Furthermore, since $r^2\in A$, it follows in particular that the origin is a (0-dimensional) fiber of $\hat{\rho}$, and the other fibers are contained in the distance spheres of $V$ around the origin. In particular, the restriction of $\hat{\rho}$ to $\sphere(V)$ defines a manifold submetry
\[
\hat{\p}_A=\hat{\rho}|_{\sphere(V)}:\sphere(A)\to \hat{X}_A:=\hat{\rho}(\sphere(V))\subset \hat{X}.
\]
Since $\mathcal{L}(A)$ is equivalent to $\rho\big|_{\sphere(V)}$, in particular its restriction to $V^{\reg}\cap \sphere(V)$ is equivalent to  $\hat{\p}_A$.

b) Suppose now that $A$ is also maximal, and thus $A=\mathcal{B}(\mathcal{L}(A))$. Since every $f\in A$ is, by construction, constant along the fibers of $\hat{\p}_A$, it follows that the $\hat{\p}_A$-fibers are contained in the fibers of $\mathcal{L}(A)$, and $\hat{A}:=\mathcal{B}(\hat{\p}_A)$ contains $\mathcal{B}(\mathcal{L}(A))=A$.

By Theorem \ref{T:subm-to-poly}, we have $\hat{\p}_A\sim \mathcal{L}(\mathcal{B}(\hat{\p}_A))=\mathcal{L}(\hat{A})$ and thus, in order to show that $\hat{\p}_A\sim \mathcal{L}(A)$, it is enough to prove that $\hat{A}=A$.

We start by proving that $A$ and $\hat{A}$ have the same field of fractions: $F(A)=F(\hat{A})$. Clearly since $A\subset \hat{A}$, $F(A)\subset F(\hat{A})$ and it is enough to prove the other inclusion. Let $f\in \hat{A}$, and let $g\in A$ be a nonzero polynomial vanishing on $V^{\sing}$ -- for example, take $P$ the be the product of all the determinants of the $m\times m$ minors of $B=(\rho_i\bullet_1 \rho_j)_{i,j=1,\ldots k}$ (see Section \ref{SS:riem-subm}). Then the product $fg$ is zero on $V^{\sing}$, and on $V^{\reg}$ it is constant along the fibers of $\mathcal{L}(A)$. Thus $fg=h\in \mathcal{B}(\mathcal{L}(A))= A$, and $f={h\over g}\in F(A)$. This gives $\hat{A}\subseteq F(A)$ and thus $F(\hat{A})\subseteq F(A)$.
\todo{(20): part 2, changed the proposition according to the suggestion. We also modified Lemma \ref{L:corReynolds} consistently with this.}
By Lemma \ref{L:corReynolds} part (b), since both $\hat{A}$ and $A$ are Laplacian, it follows that
\[
\hat{A}=F(\hat{A})\cap \R[V]=F(A)\cap \R[V]=A.
\]
\end{proof}

\todo{(20+): added remark here}
\begin{remark}
Assume $A\subseteq \RR[V]$ is a Laplacian but not necessarily maximal algebra. Then by Theorem \ref{T:sms construction} there exists a \sms $\hat{\sigma}_A:\sphere(V)\to \hat{X}$ and the algebra $\hat{A}=\mathcal{B}(\hat{\sigma}_A)$ is a maximal Laplacian algebra containing $A$ since, by construction of $\hat{\sigma}_A$, all the polynomials of $A$ are constant along the $\hat{\sigma}_A$-fibers. Again by construction, it also follows that $\hat{\sigma}_A=:\mathcal{L}(\hat{A})$ coincides with $\mathcal{L}(A)$ on the open dense set $\sphere(V^{\reg})$. By the proof of Theorem \ref{T:sms construction}, in order to prove that $A=\hat{A}$ (hence show that $A$ is, after all, maximal), it would be enough to show that $F(A)=F(\hat{A})$.
\end{remark}

\begin{proof}[Proof of Theorem \ref{MT:1-1correspondence}]
Given a manifold submetry $\p:\sphere(V)\to X$, it follows from Theorem \ref{T:subm-to-poly} that $\mathcal{B}(\p)\subseteq\RR[V]$ is a maximal and Laplacian algebra, and $\mathcal{L}(\mathcal{B}(\p))\sim \p$. Letting $A$ be a maximal and Laplacian algebra, it follows from Theorem \ref{T:sms construction} that $\mathcal{L}(A)\sim \hat{\p}_A$ for some manifold submetry $\hat{\p}_A:\sphere(V)\to \hat{X}_A$ and, since $A$ is maximal, $\mathcal{B}(\mathcal{L}(A))=A$.
\end{proof}

\part{Disconnected fibers and the maximality conjecture}
\section{Disconnected fibers}\label{S:disc-fibers}
In this section, we study submetries $\p:\sphere(V)\to X$ with disconnected leaves. In particular, we prove Theorem \ref{MT:connected-vs-integral} and Theorem  \ref{MT:disconnected}.

By \cite{Lyt}, any submetry $\p:\sphere(V)\to X$ factors as $\sphere(V)\stackrel{\p_c}{\to}X_c\to X$, where $\pi:X_c\to X$ is a submetry with finite fibers, and the fibers of $\p_c$ are the connected components of the fibers of $\p$.

Recall from Appendix \ref{A:manif-subm} that any manifold submetry induces a stratification by the dimension of the fibers. In our case, $\p$ and $\p_c$ induce the same stratification, and we let $\sphere(V)^{(2)}$ be the union of the strata $\Sigma_p$ of codimension $\leq 2$ (see Section \ref{SS:HTL}).  Since the complement of $\sphere(V)^{(2)}$ in $\sphere(V)$ consists of finitely many submanifolds of codimension $\geq 3$, it follows by transversality that $\sphere(V)^{(2)}$ is simply connected.
Since $\p_c$ is a manifold submetry with connected fibers, we can apply Proposition \ref{P:orbifold-part-srf} in Appendix \ref{A:manif-subm}, which says that the partition $(\sphere(V)^{(2)},\mathcal{F})$ into the fibers of $\p_c$ is a singular Riemannian foliation.
\todo{(21b): part 1: restructured the beginning of this section according to suggestion 21. A number of definition suppressed since they are unnecessary now.}

\todo{(21b): part 2: After the suggestion of the referee, it was possible to incorporate old Proposition 32 with the proof of theorem D.}
We are finally able to prove the main results for this section.

\begin{proof}[Proof of Theorem \ref{MT:disconnected}]
Since $\sphere(V)^{(2)}$ is simply connected and $(\sphere(V)^{(2)},\F)$ is a full singular Riemannian foliation, by \cite{Lyt10} Corollary 5.3 the quotient $O_c=\sphere(V)^{\reg}/\F=\sigma_c(\sphere(V)^\reg)$ (where $\sphere(V)^\reg$ denotes the union of leaves of maximal dimension in $\sphere(V)^{(2)}$) is a Riemannian orbifold, simply connected
\todo{(21a): ``as an orbifold'' added.}
as an orbifold.
\todo{(21b): part 3: Changed the rest of the proof according to the suggestion of the referee.}
Let $O=\sigma(\sphere(V)^\reg)$. Since different components of a $\p$-fiber have same dimension, it follows that the submetry $\pi:X_c\to X$ restricts to a submetry $\pi:O_c\to O$. Furthermore, for any open set $U\subset O$, the preimage $\pi^{-1}(U)$ equals $\p_c(\p^{-1}(U))$, and thus $\pi|_{\pi^{-1}(U)}:\pi^{-1}(U)\to U$ is a submetry. By Theorem 1.2 of \cite{Lan}, it then follows that $O$ is a Riemannian orbifold as well, and $\pi:O_c\to O$ is a Riemannian orbifold covering. Since $O_c$ is simply connected as an orbifold, it is the universal cover of $O$, and in particular there exists a properly discontinuous, free isometric action of $G=\pi_1^{orb}(O)$ on $O_c$, such that $O_c/G$ is isometric to $O$.

Finally, recall from Lemma \ref{L:small-codim} that $O_c\subset X_c$ and $O\subset X$ are connected and dense, hence every isometry $g:O_c\to O_c$ extends to an isometry $\hat{g}:X_c\to X_c$. In particular, the same group $G$ acts on $X_c$ by isometries, and $X_c/G$ is isometric to $X$.
\end{proof}

%
%

\begin{remark}
\todo[color=cyan]{(22): Added example of action on the quotient that does not lift to an isometric action on the sphere}
In the situation of Theorem \ref{MT:disconnected}, it is not always the case that the $G$-action lifts from $X_c$ to the sphere $\sphere(V)$. For instance, consider $V=\R^6$ as the space of $2\times 3$ matrices, on which the group $\SO(2)\times \SO(3)$ acts by left and right multiplication (see third line of Table E in \cite{GWZ08}), and let $\sigma_c:\sphere(V)\to X_c$ be the corresponding orbit space projection. Then $X_c$ is isometric to an interval of length $\pi/4$, the endpoints of which correspond to the two singular orbits of the $\SO(2)\times \SO(3)$-action. One of the singular isotropy groups is isomorphic to $\SO(2)$, while the other is isomorphic to $\Z_2\times \SO(2)$, which implies that the two singular orbits are not diffeomorphic. Thus the isometric involution of $X_c=[0,\pi/4]$ given by reflection across the midpoint does not lift to an isometry of $\sphere(V)$. Nevertheless, the two singular orbits have the same dimension, so that the composition $\sphere(V)\to X_c\to X_c/\Z_2$ is a (inhomogeneous) manifold submetry.
\end{remark}

Given a manifold submetry $\p:\sphere(V)\to X$ which factors as $\sphere(V)\stackrel{\p_c}{\to} X_c\to X$, by Theorem \ref{MT:disconnected}  we have that $X$ is isometric to $X_c/G$ for some discrete group $G$. We will then say that $\p$ \emph{corresponds to the pair} $(\p_c:\sphere(V)\to X_c, G)$.

\begin{lemma}\label{L:Group-action-on-rings}
Let $\p:\sphere(V)\to X$ a manifold submetry with disconnected fibers, corresponding to the pair $(\p_c:\sphere(V)\to X_c, G)$. Then $G$ induces an action on $A_c=\mathcal{B}(\p_c)$, whose fixed point set is $A=\mathcal{B}(\p)$.
\end{lemma}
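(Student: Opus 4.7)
Plan: First, the inclusion $A\subseteq A_c$ is immediate since every $\p$-basic polynomial is constant on the connected components of $\p$-fibers, which are the $\p_c$-fibers.

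For the $G$-action and the fixed-point identification I would argue Galois-theoretically. Since $G$ acts on $X_c$ by isometries (Theorem \ref{MT:disconnected}), pullback on rational functions on $X_c$ gives a natural $G$-action on the fraction field $F(A_c)$, viewed as the field of $\p_c$-basic rational functions on $V$ (using Lemma \ref{L:corReynolds}(b) to identify $A_c = F(A_c)\cap\R[V]$). The extension $F(A_c)/F(A)$ is finite Galois of degree $|G|$: on the one hand, $X_c\to X\cong X_c/G$ has generic fiber of size $|G|$ (as $G$ acts freely on the regular orbifold cover $O_c\to O$ from the proof of Theorem \ref{MT:disconnected}), giving $[F(A_c):F(A)]\le |G|$; on the other, $G$ provides $|G|$ distinct automorphisms fixing $F(A)$. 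To restrict the action to $A_c$, observe that every element of $A_c$ is integral over $A$ (since $\Spec(A_c)\to\Spec(A)$ is a finite morphism, making $A_c$ a finite $A$-module), and Galois automorphisms preserve integrality; together with $A_c$ being integrally closed in $\R[V]$ (Theorem \ref{MT:connected-vs-integral}, as $\p_c$ has connected fibers), the $G$-action preserves $A_c$.

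For the fixed points: $A_c^G = A_c\cap F(A_c)^G = A_c\cap F(A)\subseteq \R[V]\cap F(A) = A$ by Lemma \ref{L:corReynolds}(b), and the reverse inclusion $A\subseteq A_c^G$ is immediate since $\p$-basic polynomials descend through $X\cong X_c/G$ and so are automatically $G$-fixed. The main obstacle will be setting up the Galois framework rigorously, in particular identifying $F(A_c)$ with the field of $\p_c$-basic rational functions on $V$ and verifying that $\Spec(A_c)\to\Spec(A)$ is a finite morphism; a more geometric alternative is to define $\alpha_g(f)\in C(V)$ by descent-and-homogeneous-extension of $\tilde f\circ g^{-1}\circ\p_c$ (where $\tilde f\colon X_c\to\R$ is the unique continuous function with $f|_{\sphere(V)}=\tilde f\circ\p_c$), show via the averaging operator from Theorem \ref{T:Reynolds} that $\alpha_g(f)$ is integral over $A$ (by expressing all power sums $\sum_g\alpha_g(f)^k$ as averages in $A$, using that each generic $\p$-fiber splits as $|G|$ equal-volume $\p_c$-fibers permuted by $G$), and then pass from integrality to polynomiality using that $\R[V]$ is integrally closed in $\R(V)$.
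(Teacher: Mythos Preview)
Your proposal has a genuine gap at the foundational step: you never actually establish that $G$ acts on $A_c$ (or even on $F(A_c)$) by \emph{algebraic} transformations. You write ``pullback on rational functions on $X_c$ gives a natural $G$-action on $F(A_c)$'', but $X_c$ is only a metric space, and $g\in G$ is only an isometry of it. The function $\alpha_g(f)$ you define in the alternative (the homogeneous extension of $\tilde f\circ g^{-1}\circ\p_c$) is a priori merely continuous on $V$; nothing you have said forces it to be rational, let alone polynomial. Your integrality argument does not close this: even if all power sums $\sum_h\alpha_h(f)^k$ lie in $A$, the resulting monic relation $\prod_h(t-\alpha_h(f))\in A[t]$ only tells you that at each point $p$ the value $\alpha_g(f)(p)$ is a root of a polynomial with coefficients in $\R$; it does not promote a continuous function to an element of $\R(V)$, so the ``$\R[V]$ is integrally closed in $\R(V)$'' step does not apply.

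A second, related issue is that you invoke finiteness of $\Spec(A_c)\to\Spec(A)$ to get integrality of $A_c$ over $A$, but this is not established independently in the paper; it is in fact proved in the proposition \emph{after} the present lemma, using the $G$-action you are trying to construct. So the Galois-theoretic route, as written, is circular.

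The paper avoids all of this by a direct geometric argument: since $Kg:KX_c\to KX_c$ is an isometry preserving the codimension of the $K\p_c$-fibers, Theorem~1.1 of \cite{AR} says pullback by $Kg$ sends smooth $\p_c$-basic functions to smooth $\p_c$-basic functions; since it commutes with rescalings, it sends homogeneous polynomials to homogeneous polynomials of the same degree, hence restricts to $A_c\to A_c$. Once the action is in hand, the fixed-point identification is immediate from $X=X_c/G$. The key input you are missing is precisely this smoothness-of-pullback result.
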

\begin{proof}
Let $KX, KX_c$ the Euclidean cones of $X$ and $X_c$ respectively. The manifold submetries $\p, \p_c$ induce manifold submetries $K\p:V\to KX$, $K\p_c:V\to KX_c$. Furthermore, any $g\in G$ induces an isometry $Kg:KX_c\to KX_c$ preserving the codimension of the fibers of $K\sigma_c$.

Define the ring $C^{\infty}(V)^{\p_c}$ of smooth functions which are constant along the $\p_c$-fibers. Since $Kg: KX_c\to KX_c$  preserves the codimension of the fibers of $K\sigma_c$, by Theorem 1.1. of \cite{AR} it induces a map $Kg^*:C^{\infty}(V)^{\p_c}\to C^{\infty}(V)^{\p_c}$ by $Kg^*(f)(p)=f(Kg(\p_c(p)))$, which commutes with the rescalings $r_\lambda:V\to V$, $r_\lambda(v)=\lambda v$. In particular, it takes homogeneous polynomials of degree $d$ in $C^{\infty}(V)^{\p_c}$ to smooth, homogeneous functions $f$ in $C^{\infty}(V)^{\p_c}$ such that $f(r_\lambda(v))=\lambda^df(v)$, i.e., homogeneous polynomials of degree $d$. In other words, $Kg^*$ restricts to a morphism of Laplacian algebras $Kg^*:A_c\to A_c$. Furthermore $K(g_1g_2)^*=Kg_2^*\circ Kg_1^*$ and thus $G$ acts (on the right) on $A_c$. Clearly, $f\in A_c$ is invariant under the $G$-action if and only if $f$ is constant on the unions of $\p_c$-fibers $L_{p_*}=\coprod_{g\in G}\p_c^{-1}(gp_*)$, for any $p_*\in X_c$. However, since $\pi:X_c\to X$ coincides with the quotient by the $G$ action on $X_c$, we have $L_{p_*}=\p^{-1}(\pi(p_*))$ and thus $f\in A_c$ is $G$-invariant if and only if it is constant along the $\p$-fibers.
\end{proof}

The following proposition is a stronger version of Theorem \ref{MT:connected-vs-integral}:
\begin{proposition}
A manifold submetry $\p:\sphere(V)\to X$ has disconnected fibers if and only if $A=\mathcal{B}(\p)$ is not integrally closed in $\RR[V]$. In this case, letting $A_c$ denote the integral closure of $A$ in $\RR[V]$, $\p$ corresponds to the pair $(\p_c:\sphere(V)\to X_c,G)$ where:
\begin{itemize}
\item $\p_c=\mathcal{L}(A_c)$
\item $G$ is the Galois group of the extension of fields of fractions $F(A)\subset F(A_c)$.
\end{itemize}
\end{proposition}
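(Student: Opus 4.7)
The strategy is to identify $A_c=\mathcal{B}(\sigma_c)$ as precisely the integral closure of $A$ in $\RR[V]$, using the geometric observation that any $\alpha\in\RR[V]$ integral over $A$ takes only finitely many values on each $\sigma$-fiber. The stated equivalence and the identification of the Galois group then follow from Theorem \ref{MT:1-1correspondence}, Lemma \ref{L:Group-action-on-rings}, and standard Galois theory.

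First I would show that $A_c$ is integral over $A$: by Lemma \ref{L:Group-action-on-rings} the finite group $G$ acts on $A_c$ with fixed ring $A$, so for any $f\in A_c$ the polynomial $\prod_{g\in G}(t-g\cdot f)$ is monic with $G$-invariant coefficients, hence with coefficients in $A$. This yields one inclusion: $A_c$ is contained in the integral closure of $A$ in $\RR[V]$. For the reverse inclusion --- the key geometric step --- let $\alpha\in\RR[V]$ satisfy $\alpha^n+a_1\alpha^{n-1}+\cdots+a_n=0$ with $a_i\in A$. For any $p\in\sphere(V)$ and any $q$ in the same $\sigma$-fiber as $p$ we have $a_i(q)=a_i(p)$, so $\alpha(q)$ is a root of the fixed polynomial $t^n+a_1(p)t^{n-1}+\cdots+a_n(p)$. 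Thus $\alpha$ takes only finitely many values on $\sigma^{-1}(\sigma(p))$, and by continuity is constant on each connected component of that fiber --- that is, on each $\sigma_c$-fiber. Since the integral closure of a graded subring inside a graded ring is itself graded (a standard Vandermonde/rescaling argument), each homogeneous component of $\alpha$ is also integral over $A$ and constant on $\sigma_c$-fibers, hence lies in $\mathcal{B}(\sigma_c)=A_c$. So $\alpha\in A_c$, and $A_c$ coincides with the integral closure of $A$ in $\RR[V]$.

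The main equivalence is then immediate: $\sigma$ has connected fibers iff $\sigma=\sigma_c$ iff $A=A_c$ iff $A$ is integrally closed in $\RR[V]$. The identity $\sigma_c\sim\mathcal{L}(A_c)$ is just the inverse functor of Theorem \ref{MT:1-1correspondence} applied to the maximal Laplacian algebra $A_c$ (Theorem \ref{T:subm-to-poly}). For the Galois group, $G$ acts faithfully on $A_c$ because $A_c$ separates the fibers of $\sigma_c$ (i.e.\ points of $X_c$) and $G$ acts faithfully on $X_c$ by its construction as the deck transformation group in Theorem \ref{MT:disconnected}. The standard averaging trick, multiplying $\alpha=f/h\in F(A_c)^G$ by $N(h):=\prod_{g\in G}g(h)\in A$, shows $F(A_c)^G=F(A)$, so by Artin's theorem $\mathrm{Gal}(F(A_c)/F(A))=G$. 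The main obstacle is the geometric step in the previous paragraph, where connectedness of $\sigma_c$-fibers is transported through the integral relation to force integral polynomials into $A_c$; everything else is standard or an immediate application of the main correspondence.
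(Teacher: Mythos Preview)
Your argument is correct and follows essentially the same route as the paper's proof: the finitely-many-roots-plus-connectedness step to force integral elements into $A_c$, the product polynomial $\prod_{g\in G}(t-g\cdot f)$ to show $A_c$ is integral over $A$, and the averaging trick to identify $F(A_c)^G=F(A)$ are exactly the ingredients used in the paper. The only cosmetic differences are that the paper organizes the first direction as ``connected fibers $\Rightarrow$ integrally closed'' and then applies it to $\sigma_c$, and that your detour through gradedness of the integral closure is unnecessary since $A_c$ is maximal (any polynomial constant on $\sigma_c$-fibers already lies in $A_c$).
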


\begin{proof}
\todo{(23): typos fixed.}
Suppose first that $\p:\sphere(V)\to X$ has connected fibers, and let $f\in \RR[V]$ be an integral element over $A=\mathcal{B}(\p)$. Then $f$ is satisfies a polynomial equation
\[
f^n+a_1f^{n-1}+\ldots +a_{n-1}f+a_n=0,\qquad a_1,\ldots, a_n\in A.
\] 
Restricting this equation to a fiber $L$ of $\p$, the restrictions $a_1|_L,\ldots a_n|_L$ are constant, and therefore the restriction $f|_L$ is a solution of a polynomial with constant real coefficients. Since $f$ is continuous and $L$ is connected, it follows that $f$ must be constant on $L$. Since $L$ was chosen arbitrarily, it follows that $f$ is constant along \emph{all} $\p$-fibers, hence $f\in A$ and thus $A$ is integrally closed in $\RR[V]$.

Suppose now that $\p$ has disconnected fibers, with corresponding pair $(\p_c,G)$. Recall that $\p_c:\sphere(V)\to X_c$ is the manifold submetry whose fibers are the connected components of the fibers of $\p$, and $G$ a finite group of isometries of $X_c$ whose quotient is $X$. By the first part of the proof, $A_c=\mathcal{B}(\p_c)$ is integrally closed in $\RR[V]$. We claim that $A\subset A_c$ is an integral extension: in fact, by Lemma \ref{L:Group-action-on-rings}, $G$ acts on $A_c$ with fixed point set $A$. For any $f\in A_c\setminus A$,  define the polynomial in $A_c[t]$:
\[
P(t)=\prod_{g\in G}(t-g\cdot f)=0
\]
This is a monic polynomial, and $f$ satisfies $P(f)=0$. Furthermore, since $g\cdot P=P$, it follows that all the coefficients of $P$ are $G$-invariant, hence $P\in A[t]$. Therefore, $f$ is integral over $A$, hence $A_c$ is the integral closure of $A$ in $\RR[V]$.

It remains to prove that $G$ coincides with the Galois group of the extension $F(A)\subset F(A_c)$, and for this it is enough to show that the field fixed by $G$ is $F(A)$. Let ${a\over b}\in F(A_c)$ an element fixed by $G$, where $a,b\in A_c$. We multiply and divide by $\bar{b}=\prod_{g\in G\setminus \{e\}}g\cdot b$, and obtain
\[
{a\over b}={a\over b}{\bar{b}\over \bar{b}}={a\bar{b}\over \prod_{g\in G}g\cdot b}=:{a'\over b'}
\]
where $b'\in A_c$ is fixed by $G$ and thus $b'\in A$ by Lemma \ref{L:Group-action-on-rings}. But then $a'\in A_c$ is fixed by $G$ as well, and thus again $a'\in A$, which proves ${a\over b}={a'\over b'}\in F(A)$.
\end{proof}

\section{About the maximal and Laplacian conditions}\label{S:maximal-and-laplacian}

Theorem  \ref{MT:1-1correspondence} establishes an equivalence between manifold submetries, and polynomial algebras that are both Laplacian and maximal.

Of these two conditions, being Laplacian is certainly the most compelling one, because it can be fairly easily checked, and it specializes to well-known conditions in two different situations, namely when all generators are quadratic, and when there are exactly two generators. Moreover, in these two situations, Laplacian \emph{implies} maximal, which provides evidence for the Conjecture in the Introduction.
\begin{proposition}
Let $A\subset\R[V]$ be an algebra generated by homogeneous polynomials $\rho_1, \ldots \rho_k$, with $\rho_1=r^2$. Then:
\begin{enumerate}[(a)]
\item $A$ is Laplacian if and only if $\Delta \rho_i, \langle\nabla\rho_i, \nabla\rho_j\rangle\in A$ for every $i,j$.
\item Suppose $\rho_i$ is quadratic for all $i$. Then $A$ is Laplacian if and only if the vector space $\textrm{span}\{\rho_1,\ldots \rho_k\}$ is a Jordan algebra with respect to the product $f\bullet_1 g:=\langle\nabla f, \nabla g\rangle$. In this case, $A$ is maximal.
\item Suppose $k=2$, and let $\tilde{g}=\deg\rho_2$. Then $A$ is Laplacian if and only the generator $\rho_2$ can be replaced with a (homogeneous) polynomial $\tilde{F}$ satisfying the \emph{Cartan-M\"unzner equations} (see \cite[equations (5), (6)]{Muenzner80}):
\[ \Delta \tilde{F}=cr^{\tilde{g}-2}, \qquad \|\nabla\tilde{F}\|^2 = \tilde{g}^2 r^{2\tilde{g}-2}.\]
In this case, $A$ is maximal.
\end{enumerate}
\end{proposition}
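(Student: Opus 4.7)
The proof naturally splits into the three parts. For (a), the forward direction is immediate: if $A$ is Laplacian, then $\Delta \rho_i \in A$ by definition, and $\langle\nabla \rho_i, \nabla\rho_j\rangle = \rho_i \bullet_1 \rho_j$ belongs to $A$ by the corollary to Lemma \ref{L:bullet-laplacian}. For the converse, I would run a joint induction on the length $s$ of a monomial $h = \rho_{i_1}\cdots\rho_{i_s}$, showing at each step that $\Delta h \in A$ and $\langle\nabla\rho_j, \nabla h\rangle \in A$ for every $j$. The inductive step uses the Leibniz-type formulas
\begin{align*}
\Delta(\rho_i h') &= (\Delta\rho_i)h' + \rho_i(\Delta h') + 2\langle\nabla\rho_i, \nabla h'\rangle,\\
\langle\nabla\rho_j, \nabla(\rho_i h')\rangle &= h'\langle\nabla\rho_j, \nabla\rho_i\rangle + \rho_i\langle\nabla\rho_j, \nabla h'\rangle,
\end{align*}
together with the hypothesis on the generators; $\R$-linearity extends this to all of $A$.

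For (b), since each $\rho_i$ is quadratic, $\Delta\rho_i$ is a constant and $\langle\nabla\rho_i,\nabla\rho_j\rangle$ is homogeneous of degree $2$, so by (a), $A$ is Laplacian if and only if $W := \operatorname{span}\{\rho_1,\ldots,\rho_k\}$ is closed under $\bullet_1$. Identifying quadratic forms with symmetric endomorphisms $S_i$ via $\rho_i(x) = \langle S_i x, x\rangle$, a direct computation using $\nabla \rho_i = 2 S_i x$ shows that $\bullet_1$ corresponds, up to a universal positive scalar, to the Jordan product $S_i \cdot S_j = \tfrac{1}{2}(S_iS_j + S_jS_i)$ on symmetric endomorphisms. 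Since the ambient space of symmetric endomorphisms forms a Jordan algebra under this product, any subspace closed under it automatically satisfies the Jordan identity. For maximality, I would import the main theorem of \cite{MendesRadeschi16}, which associates to any such Jordan subalgebra a polar foliation on $\sphere(V)$ whose algebra of basic polynomials is generated by the Jordan subalgebra, and hence coincides with $A$.

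For (c), with $A = \R[r^2, \rho_2]$ and $\tilde g = \deg \rho_2$, criterion (a) reduces the Laplacian condition to $\Delta\rho_2 \in A$ and $\|\nabla\rho_2\|^2 \in A$ (the mixed terms $\langle\nabla r^2, \nabla\rho_j\rangle = 2(\deg \rho_j)\rho_j$ are automatic by Euler's identity, and $\Delta r^2$ is a constant). A dimension count on the homogeneous components of $A$ forces $\Delta\rho_2 = c\, r^{\tilde g - 2}$ and $\|\nabla\rho_2\|^2 = \alpha r^{2\tilde g-2} + \beta r^{\tilde g - 2}\rho_2$, with $\beta = 0$ automatic when $\tilde g$ is odd. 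I would then set $\tilde F := \lambda(\rho_2 - c_0 r^{\tilde g})$ and, using Euler's identity to rewrite $\|\nabla \tilde F\|^2$ in terms of $\tilde F$ and $r$, choose $c_0 = \beta/(2\tilde g^2)$ to cancel the $\tilde F$-term and then pick $\lambda$ to normalize the $r^{2\tilde g - 2}$-coefficient to $\tilde g^2$; the other Cartan--M\"unzner equation is then automatic since $\Delta r^{\tilde g}$ is a known constant multiple of $r^{\tilde g - 2}$. Conversely, if such $\tilde F$ exists, criterion (a) immediately yields that $\R[r^2, \tilde F] = A$ is Laplacian. Maximality of $A$ then follows from M\"unzner's theory \cite{Muenzner80}: the level sets of $\tilde F$ on $\sphere(V)$ form an isoparametric family whose algebra of basic polynomials is exactly $\R[r^2, \tilde F]$.

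The main obstacle is expected to be the forward direction of (c): justifying that the constants $c_0, \lambda$ can always be chosen, in particular ruling out the degenerate case $\alpha + c_0^2 \tilde g^2 = 0$ (which forces $\|\nabla\rho_2\|^2 \equiv 0$ and hence $\rho_2$ constant, reducing to the trivial situation $A = \R[r^2]$), and verifying that the substitution $\rho_2 \leftrightarrow \tilde F$ preserves the algebra. The maximality assertions in (b) and (c) are essentially imports from \cite{MendesRadeschi16} and \cite{Muenzner80}, so the genuinely new content is the degree analysis and Leibniz-based induction in parts (a) and (c).
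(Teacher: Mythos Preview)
Your approach matches the paper's for part (a) and for the Laplacian characterizations in (b) and (c); the Leibniz induction, the Jordan-algebra identification, and the degree count leading to the Cartan--M\"unzner form are exactly what the paper does (the paper splits (c) into $\tilde g$ odd/even rather than your unified substitution, but the content is the same).

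The place where your proposal is thinner than the paper is the \emph{maximality} argument in (c). You write that ``the level sets of $\tilde F$ on $\sphere(V)$ form an isoparametric family whose algebra of basic polynomials is exactly $\R[r^2,\tilde F]$'' and attribute this directly to M\"unzner. The paper does not take this for granted: by M\"unzner's Satz~3, a solution $\tilde F$ of the Cartan--M\"unzner equations need not itself be a Cartan--M\"unzner polynomial; one only gets either $\tilde F=F$ or $\tilde F=\pm(2F^2-r^{2g})$ for some genuine Cartan--M\"unzner polynomial $F$ of degree $g$. The paper then restricts to a two-dimensional section $\Sigma$ of the isoparametric foliation, where $F|_\Sigma(z)=\operatorname{Re}(z^g)$, and uses classical dihedral invariant theory ($D_g$ in the first case, $D_{2g}$ in the second) to conclude that any polynomial constant on the level sets of $\{r^2,\tilde F\}$ already lies in $\R[r^2,\tilde F]$. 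Your sketch skips this case distinction and the section argument, which is the actual substance of the maximality proof.

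A smaller inaccuracy: in (b) the foliations arising from \cite{MendesRadeschi16} are products of Clifford foliations and standard diagonal representations, not polar foliations in general; and the paper's maximality argument there chains together Theorems~B, C, and F of that reference rather than a single statement.
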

\begin{proof}
\begin{enumerate}[(a)]
\item One implication follows immediately from the standard formula for the Laplacian of a product \[\Delta(fg)=f\Delta g + g\Delta f +2\langle\nabla f, \nabla g\rangle.\] 
For the other, assume that $\Delta \rho_i, \langle\nabla\rho_i, \nabla\rho_j\rangle\in A$ for every $i,j$. By linearity it is enough to show that the Laplacian of every monomial $f$ in the $\rho_i$ belongs to $A$. This can be accomplished by proving the following seemingly stronger statement by induction on the length of a monomial $f$ in $\{\rho_i\}$: $\Delta f$ \emph{and}   $\langle\nabla f, \nabla\rho_i\rangle$ belong to $A$, for every $i$.
\item Under the natural identification between quadratic polynomials and self-adjoint endomorphisms, the product $f\bullet_1 g:=\langle\nabla f, \nabla g\rangle$ reduces to the standard Jordan product between self-adjoint endomorphisms $(X,Y)\mapsto (XY+YX)/2$. Since $\Delta \rho_i$ are constant, it follows from part (a) that $A$ is Laplacian if and only if $\textrm{span}\{\rho_1,\ldots \rho_k\}$ is closed under this Jordan product.

\todo{(25): made more precise how to get that $A$ is maximal in this case.}
Theorem B of \cite{MendesRadeschi16}, shows that for any such algebra $A\subseteq \R[V]$, the partition $\F=\mathcal{L}(A)$ is a singular Riemannian foliation, given by the product of Clifford foliations and orbit decompositions of standard diagonal representations. By \cite[Theorem C]{MendesRadeschi16}, all such foliations satisfy the property that $\hat{A}=\mathcal{B}(\F)$ is also generated by degree 2 elements. We then have that $A\subseteq \hat{A}$, and also that $\mathcal{L}(A)=\mathcal{L}(\hat{A})$ which, by Theorem F of \cite{MendesRadeschi16}, implies that $A$ is isomorphic to $\hat{A}$. Therefore, $A=\hat{A}$ and thus $A$ is maximal.

\item Assume first that $A$ is generated by $\rho_1=r^2$ and $\tilde{F}$ satisfying the Cartan-M\"unzner equations. In particular, $\Delta\tilde{F}, \langle\nabla\tilde{F}, \nabla\tilde{F}\rangle\in A$. Since $\Delta r^2$ is a constant, and $ \langle\nabla r^2, \nabla\tilde{F}\rangle = 2\tilde{g} \tilde{F}\in A$, it follows from part (a) that $A$ is Laplacian.

Conversely, suppose $A$ is Laplacian. Then $\Delta \rho_2$ is an element of $A$ that is homogeneous of degree $\tilde{g}-2$, and hence a scalar multiple of $r^{\tilde{g}-2}$.
Similarly, $\|\nabla\rho_2\|^2$ is a linear combination of $r^{\tilde{g}-1}$ and $\rho_2 r^{\tilde{g}-2}$.
If $\tilde{g}$ is odd, it follows that a (non-zero) scalar multiple of $\rho_2$ satisfies the Cartan-M\"unzner equations.
If $\tilde{g}$ is even, we set $\tilde{F}=a \rho_2 + b r^{\tilde{g}}$, and compute $\Delta\tilde{F}$ and $\|\nabla\tilde{F}\|^2$.
It then becomes clear that $a,b\in\R$ can be chosen so that: $\tilde{F}$ satisfies the Cartan-M\"unzner equations; and $a\neq 0$, so that $A$ is also generated by $\rho_1=r^2$ and $\tilde{F}$.

Finally, assume $\tilde{F}$ satisfies the Cartan-M\"unzner equations. To show that the algebra $A$ generated by $r^2$ and $\tilde{F}$ is maximal, first recall that, by \cite[Satz 3]{Muenzner80}, there exists an isoparametric hypersurface $M$ in the sphere $\sphere(V)$, with $g$ principal curvatures, such that the associated so-called Cartan-M\"unzner polynomial $F$ (of degree $g$) satisfies either $\tilde{F}=F$, or $\tilde{F}=\pm(2F^2-r^{2g})$.
The parallel and focal submanifolds to $M$ form an isoparametric foliation $\F$, which is also given by the common level sets $\mathcal{L}(r^2,F)$ of $r^2$ and $F$.

Fix a point $p\in M$ and let $\Sigma\subset V$ be the (two-dimensional) normal space of $M$ at $p$.
Then $\Sigma$ is a \emph{section} of the foliation $\F$, in the sense that every leaf of $\F$ meets $\Sigma$, and does so orthogonally. Clearly, the partition of $\Sigma$ into the intersections of the leaves with $\Sigma$ coincides with $\mathcal{L}(r^2|_\Sigma, F|_\Sigma)$.
Moreover,  $F$ is constructed (see \cite[Section 3]{Muenzner80}) so that $F|_\Sigma(z)=\operatorname{Re} (z^g)$ for all $z\in\C\cong\Sigma$.
It is a well-known fact in Invariant Theory that $|z|^2$ and $\operatorname{Re} (z^g)$ generate the algebra of invariants of the natural action of the dihedral group $D_g$ with $2g$ elements on $\R^2\cong\C$.

Let $h\in\R[V]$ be constant on the common level sets of $A$. We need to show that $h\in A$.

If $\tilde{F}=F$, then $h|_\Sigma$ is $D_g$-invariant, and hence a polynomial in $r^2|_\Sigma$ and $F|_\Sigma$. Since $\Sigma$ meets all leaves of $\F$, this shows that $h\in A$.

If, on the other hand, $\tilde{F}=\pm(2F^2-r^{2g})$, then
\[\tilde{F}|_\Sigma=\pm\left(2\left(\frac{z^g+\bar{z}^g}{2}\right)^2-z^g\bar{z}^g\right)=\pm\operatorname{Re}(z^{2g}).\]
Thus $h|_\Sigma$ is $D_{2g}$-invariant, hence a polynomial in $r^2|_\Sigma$ and $\tilde{F}|_\Sigma$. Since $\Sigma$ meets all common level sets of $\{r^2,\tilde{F}\}$, it follows that $h\in A$.
\end{enumerate}
\end{proof}

%
%
%
%
%
%
%

\begin{appendix}
\section{Lagrangian families of Jacobi fields}\label{A:Lagr}

The goal is this section is to recall some results regarding Lagrangian families of Jacobi fields, and important results by Wilking and Lytchak. For a deeper introduction on this topics, we refer the reader to \cite{Wilking07}, \cite{Lyt09} and Chapter 4 of \cite{RadLN}.

Let $I$ be an interval of any type (it can be a half line or the whole real line as well). Consider a vector bundle $\pi:E\to I$ together with a smoothly-varying inner product  $\scal{\,,\,}$ on each fiber, a covariant derivative $D:\Gamma(E)\to \Gamma(E)$ compatible with the inner product (we will write $X':=D(X)$ for a section $X\in \Gamma(E)$), and a symmetric endomorphism $R\in \Sym^2(E)$ called \emph{curvature operator}. Clearly, given a Riemannian manifold $(M,g)$ and a geodesic $\gamma:I\to M$, then $E=\gamma'^\perp$ automatically comes equipped with $\scal{\,,\,}_t=g_{\gamma(t)}$, $D=\nabla_{\gamma'}$, and $R(t)=R^M(\cdot, \gamma'(t))\gamma'(t)$ where $\nabla$ and $R^M$ denote the Levi Civita connection and the Riemann curvature tensor of $g$, respectively.

Since $I$ is contractible, $E$ is trivial and thus it can be identified, via parallel transport, to $V\times I\to I$ for some Euclidean vector space $(V,\scal{\,,\,})$. Via this identification, $R$ becomes a function $R:I\to \Sym^2(V)$.

With this setup, we can define the space of \emph{($R$-)Jacobi fields} as the set of sections
\[
\mathcal{J}=\{J:I\to V\mid J''(t)+R(t)J(t)=0\quad \forall t\in I\}.
\]
This space has dimension $2\dim V$, isomorphic to $V\oplus V$ via the map $J\mapsto (J(0),J'(0))$. It is easy to see that for $J_1,J_2\in \mathcal{J}$ the function $\omega(J_1,J_2)=\scal{J_1(t),J_2'(t)}-\scal{J_1'(t),J_2(t)}$ is in fact constant, and defines a symplectic product on $\mathcal{J}$.

A subspace $W\subset \mathcal{J}$ is called \emph{isotropic} if $\omega|_W=0$. Equivalently, $W$ is isotropic if $\scal{J_1'(t),J_2(t)}=\scal{J_1(t), J_2'(t)}$  for any $J_1,J_2\in W$. The maximal dimension of an isotropic space is $\dim V$. An isotropic subspace of maximal dimension is called a \emph{Lagrangian subspace}.

Given a subspace $W\subset \mathcal{J}$, define $W_t=\{J\in W\mid J(t)=0\}$ and $W(t)=\{J(t)\mid J\in W\}$. 
One fundamental property of isotropic subspaces is the following:

\begin{proposition}[\cite{Lyt09}, Lemma 2.2]\label{P:equality-ae}
An isotropic space $W$ of Jacobi fields satisfies $\dim W(t)=\dim W$ for all but discretely values of $t$.
\end{proposition}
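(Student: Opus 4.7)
The plan is to fix an arbitrary $t_0\in I$ and show that $\dim W(t)=\dim W$ for every $t$ in some punctured neighborhood of $t_0$; since $t_0$ is arbitrary, this forces the rank-drop set $\{t\in I\mid \dim W(t)<\dim W\}$ to be discrete.

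My first step would be to choose a basis $J_1,\dots,J_k$ of $W$ adapted to $t_0$ as follows. Setting $d=\dim W_{t_0}$, take $J_1,\dots,J_d$ to span $W_{t_0}$ (so $J_i(t_0)=0$ for $i\le d$), and complete to a basis so that $J_{d+1}(t_0),\dots,J_k(t_0)$ are linearly independent in $V$. Since a Jacobi field with $J(t_0)=J'(t_0)=0$ vanishes identically, the derivatives $J_1'(t_0),\dots,J_d'(t_0)$ are also linearly independent. This suggests introducing the auxiliary subspaces
\[
A=\mathrm{span}\{J_i'(t_0)\mid i\le d\}\In V, \qquad B=W(t_0)=\mathrm{span}\{J_j(t_0)\mid j>d\}\In V,
\]
of dimensions $d$ and $k-d$ respectively.

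The crucial step is to exploit isotropy: for $i\le d<j$, evaluating the identity $\omega(J_i,J_j)=0$ at $t_0$ yields $\scal{J_i'(t_0),J_j(t_0)}=0$, so $A\perp B$. Hence $A\cap B=0$ and $A+B$ has dimension exactly $k$. Taylor expansion then closes the argument: for $i\le d$ one has $J_i(t)=(t-t_0)J_i'(t_0)+O((t-t_0)^2)$, and for $j>d$, $J_j(t)=J_j(t_0)+O(t-t_0)$. Rescaling the first $d$ columns of the evaluation matrix $\phi(t):=[J_1(t)\mid\cdots\mid J_k(t)]$ by $(t-t_0)^{-1}$ (which does not change the rank for $t\ne t_0$) produces columns that are close to a basis of $A+B$; by openness of the full-rank condition, $\phi(t)$ has rank $k$ on a punctured neighborhood of $t_0$, as desired.

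The only place the isotropy hypothesis genuinely enters is the orthogonality $A\perp B$; without it, the two subspaces could share a nontrivial intersection, and then the rank of $\phi(t)$ could legitimately fail to be maximal on an entire interval rather than only at isolated points. This orthogonality step is therefore the main content of the proof, and also identifies where isotropy is essential; the remainder is a combination of ODE uniqueness (for the linear independence of $J_i'(t_0)$) and an open-condition continuity argument.
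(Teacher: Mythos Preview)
The paper does not supply its own proof of this proposition: it is quoted as Lemma~2.2 of \cite{Lyt09} and stated without argument. Your proof is correct, and it is essentially the standard argument (and, in outline, the one in \cite{Lyt09}): adapt a basis of $W$ to the kernel $W_{t_0}$ of evaluation at $t_0$, use isotropy at $t_0$ to show that the span $A$ of the derivatives $J_i'(t_0)$ (for $i\le d$) is orthogonal to $B=W(t_0)$, and then use the first-order Taylor expansion together with the column rescaling trick to conclude full rank on a punctured neighborhood. You have also correctly isolated the single place where isotropy is used.
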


It follows from the proposition above that the \emph{focal function} $f_W(t):=\dim(W_t)$ equals zero for all but discretely many values of $t\in I$. Thus, it makes sense to define, for every compact interval $[a,b]\subset I$, the \emph{index of $W$ over $[a,b]$} by
\[
\ind_IW=\sum_{t\in [a,b]}f_W(t).
\]

The index satisfies the following semi-continuity property, cf. \cite{Lyt09}:
\begin{proposition}\label{P:continuity}
Let $R_n:I\to \Sym^2(V)$ be a sequence of families of symmetric endomorphisms converging in the $C^0$ topology to $R$. Let $W_n$ be
isotropic subspaces of $R_n$-Jacobi fields that converge to an isotropic subspace $W$ of $R$-Jacobi fields. Let $[a, b]\subseteq I$ be a compact interval and assume that $f_{W_n}(a) = f_{W}(a)$ and $f_{W_n}(b) = f_W (b)$, for all $n$ large enough.

Then $\ind_{[a,b]}W\geq  \ind_{[a,b]}W_n$ for all $n$ large enough. If all $W_n$ are Lagrangians then this inequality becomes an equality.
\end{proposition}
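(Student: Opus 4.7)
The plan is to localize the argument around the finitely many focal points of $W$ in $[a,b]$, and then prove a semicontinuity-type bound on focal multiplicity in each localization.

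Choose bases $\{J_j^n\}_{j=1}^k$ of $W_n$ whose initial data at a fixed reference point converge to those of a basis $\{J_j\}$ of $W$. Since $R_n\to R$ in $C^0$, continuous dependence for linear ODEs gives $J_j^n\to J_j$ and $(J_j^n)'\to J_j'$ uniformly on $[a,b]$. Writing the evaluation matrices $M_n(t)=(J_1^n(t)\mid\cdots\mid J_k^n(t))$, one has $f_{W_n}(t)=k-\operatorname{rank} M_n(t)$, and lower semicontinuity of rank under uniform convergence yields the upper semicontinuity
\[
\limsup_{n\to\infty}f_{W_n}(t_n)\;\le\;f_W(t^*)\qquad\text{whenever } t_n\to t^*\ \text{in}\ [a,b].
\]

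By Proposition \ref{P:equality-ae} the focal set of $W$ in $[a,b]$ is finite; let $t_1<\cdots<t_r$ denote its focal points in $(a,b)$, and fix pairwise disjoint open neighborhoods $U_i\subset(a,b)$ of the $t_i$. A compactness argument on $[a,b]\setminus\bigcup_i U_i$ combined with the upper semicontinuity above forces $f_{W_n}\equiv 0$ outside $\bigcup_i U_i\cup\{a,b\}$ for $n$ large, and the endpoint hypothesis disposes of the boundary contributions. The result thus reduces to proving, for each $i$, the local inequality $\sum_{t\in U_i}f_{W_n}(t)\le f_W(t_i)$, with equality whenever all the $W_n$ are Lagrangian.

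The local step is the crux and constitutes the main technical obstacle. I would extend $W$ and each $W_n$ to Lagrangians $\Lambda\supset W$, $\Lambda_n\supset W_n$, chosen coherently so that $\Lambda_n\to \Lambda$ — for instance by fixing a reference Lagrangian $L_0$ transverse to $W$ at a non-focal point and letting $\Lambda_n$ be the symplectic-orthogonal extension of $W_n$ inside $W_n+L_0$. One then interprets $\ind_{[a,b]}\Lambda_n$ as the Maslov intersection number between the constant path $t\mapsto\Lambda_n$ and the path of vertical Lagrangians $t\mapsto\{J\in\mathcal{J}_n:J(t)=0\}$ in the symplectic space $\mathcal{J}_n$ of $R_n$-Jacobi fields. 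Homotopy invariance of the Maslov index together with the endpoint hypothesis yields $\ind_{[a,b]}\Lambda_n=\ind_{[a,b]}\Lambda$, and localizing gives $\sum_{t\in U_i}f_{\Lambda_n}(t)=f_\Lambda(t_i)$ for $n$ large. The inequality for $W_n\subset\Lambda_n$ is then extracted by subtracting the complementary contribution $f_{\Lambda_n}-f_{W_n}$, which is upper semicontinuous with limit $f_\Lambda(t_i)-f_W(t_i)$ by the coherence of the extensions. In the Lagrangian case $W_n=\Lambda_n$ and equality is immediate; summing the local estimates over $i$ and restoring the endpoint terms completes the proof.
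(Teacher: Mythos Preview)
The paper does not actually prove this proposition; it is stated in the appendix with a citation ``cf.\ \cite{Lyt09}'' and no proof. So there is no paper proof to compare against; I will assess your argument on its own.

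Your localization and your treatment of the Lagrangian case are correct and standard: the Maslov index is a homotopy invariant of paths of Lagrangians with fixed endpoints, and your use of the endpoint hypothesis is the right way to pin down the boundary terms.

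The isotropic inequality, however, has a genuine gap. After establishing $\sum_{t\in U_i} f_{\Lambda_n}(t)=f_\Lambda(t_i)$, you write that the complementary contribution $f_{\Lambda_n}-f_{W_n}$ is ``upper semicontinuous with limit $f_\Lambda(t_i)-f_W(t_i)$''. But to obtain $\sum_{t\in U_i} f_{W_n}(t)\le f_W(t_i)$ you need the \emph{opposite} inequality for the complement, namely
\[
\sum_{t\in U_i}\bigl(f_{\Lambda_n}(t)-f_{W_n}(t)\bigr)\;\ge\;f_\Lambda(t_i)-f_W(t_i),
\]
i.e.\ \emph{lower} semicontinuity of the complementary index. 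Upper semicontinuity (which is what your rank argument gives, and what the proposition itself asserts for isotropic families) points the wrong way. Moreover, since $f_{\Lambda_n}-f_{W_n}=f_{\Lambda_n/W_n}$ is the focal function of the isotropic family $\Lambda_n/W_n$ for the transverse Jacobi equation, invoking semicontinuity for it is essentially the statement you are trying to prove.

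One way to close the gap is to choose the Lagrangian extension $\Lambda\supset W$ so that $f_{\Lambda/W}(t_i)=0$ at the focal point under consideration (equivalently $f_\Lambda(t_i)=f_W(t_i)$); such an extension exists because one can prescribe the values $J_{k+1}(t_i),\ldots,J_{\dim V}(t_i)$ to span a complement of $W(t_i)$ in $V$. Then, provided you verify that the transverse data converge (so that $\Lambda_n/W_n\to\Lambda/W$ as isotropic families for convergent curvature operators), your \emph{pointwise} upper semicontinuity forces $f_{\Lambda_n/W_n}\equiv 0$ on $U_i$ for $n$ large, and the subtraction becomes an equality. This is extra work you have not done, and the convergence of the transverse curvature operators $R^{H_n}\to R^H$ (which involve the $A$-tensor of $W_n$) deserves an explicit argument.
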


\subsection{Transverse Jacobi equation}\label{A:transvJE}

Let $E\simeq V\times I\to I$ be a vector bundle with $R\in \Sym^2(V)$, and $\Lambda$ be a Lagrangian family of $R$-Jacobi fields, and let $W$ be a subspace of $\Lambda$. Then $W$ is isotropic by default, and by \cite{Wilking07} the subspaces
\[
\tilde{W}(t)=\{J(t)\mid J\in W\}\oplus\{J'(t)\mid J\in W_t\}\subset E_t
\]
define a smooth vector bundle $E_W:=\coprod_{t\in I}\tilde{W}(t)\to I$. The quotient $H:=E/E_W$ comes equipped with:
\begin{itemize}
\item A Euclidean product $\scal{[v_1],[v_2]}:=\scal{pr_{E_W^\perp}(v_1),pr_{E_W^\perp}(v_2)}$, where $pr_{E_W^\perp}E\to E_W^\perp$ denotes the orthogonal projection onto $E_W^\perp$.
\item A covariant derivative $D^H([X(t)])=[D(pr_{E_W^\perp}X(t))]$.
\item A vector bundle map $A: E_W\to H$ given by $A(v)=[J'(t)]$, where $J\in W$ is such that $J(t)=v$.
\item A symmetric endomorphism $R^H\in \Sym^2(H)$ given by $$R^H_t([v])=[R_t(pr_{E_W^\perp}(v))+3AA^*[v]],$$ where $A^*:H\to E_W$ is the adjoint of $A$.
\end{itemize}

\begin{proposition}[Transverse Jacobi equation]
The projection $E\to H$ sends the Jacobi fields in $\Lambda$ to an isotropic subspace of $R^H$-Jacobi fields in $H$, which is isomorphic to $\Lambda/W$ as a vector space.
\end{proposition}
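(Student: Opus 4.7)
The plan is to verify three things in turn: (i) the rule $J \mapsto \bar J := \pi(J)$, where $\pi \colon E \to H$ is the quotient projection, sends $\Lambda$-Jacobi fields to $R^H$-Jacobi fields in $H$; (ii) the kernel of this assignment equals $W$, yielding the isomorphism with $\Lambda/W$; and (iii) the resulting subspace is isotropic.

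I would begin with the kernel computation (ii), since it is the cleanest. That $W$ lies in the kernel is immediate, because each $J \in W$ takes values in $W(t) \subseteq \tilde W(t) = E_W(t)$. For the reverse inclusion, suppose $J \in \Lambda$ satisfies $J(t) \in E_W(t)$ for every $t$. At a generic $t_0$ where $W_{t_0} = 0$ one has $E_W(t_0) = W(t_0)$, so $J(t_0) = \tilde J(t_0)$ for some $\tilde J \in W$. The relation $\omega(J - \tilde J, K) = 0$ for every $K \in W$, which holds because $\Lambda$ is Lagrangian and $W \subseteq \Lambda$ is isotropic, will force $J'(t_0) - \tilde J'(t_0)$ to lie in $E_W(t_0)$; uniqueness of Jacobi fields together with a dimension count then yields $J \in W$.

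Next I would handle isotropy (iii), which reduces to the identity $\omega^H(\bar J_1, \bar J_2) = \omega(J_1, J_2)$ for $J_1, J_2 \in \Lambda$, since the right-hand side vanishes by Lagrangianity of $\Lambda$. Identifying $H$ with $E_W^\perp$ via orthogonal projection, one decomposes each $J_i$ into its $E_W$- and $E_W^\perp$-parts and expands both symplectic forms; the mixed terms cancel because $W$ is isotropic, leaving the asserted equality.

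The main step, and the main obstacle, is (i): showing that $\bar J$ satisfies $(D^H)^2 \bar J + R^H \bar J = 0$. Starting from $D^2 J + R J = 0$, I would project to $E_W^\perp$, using that $D^H = \pi_{E_W^\perp} \circ D$ on sections of $E_W^\perp$. The second derivative $(D^H)^2 \bar J$ will differ from $\pi_{E_W^\perp}(D^2 J)$ by correction terms involving the second fundamental form of $E_W$ inside $E$, and these must be matched with the formula $R^H = \pi \circ R \circ \iota + 3AA^*$. The crucial input is that the second fundamental form of $E_W$ is controlled by the tensor $A$ through the definition $A(v) = [J'(t)]$ for $J \in W$ with $J(t) = v$, and the isotropy of $W$ ensures that the resulting operator is the symmetric tensor $AA^*$. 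The O'Neill-type factor of $3$ in $R^H$ will then emerge from carefully tracking the interaction of two covariant derivatives with the orthogonal projection, exactly as in Wilking's original argument \cite{Wilking07}.
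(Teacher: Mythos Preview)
The paper does not supply its own proof of this proposition: it is stated in the appendix as a known result due to Wilking, with a reference to \cite{Wilking07} (and to \cite{Lyt09}, \cite{RadLN} for further exposition), and then used as a black box. So there is nothing to compare your argument against within the paper itself.

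Your sketch is a reasonable outline of the standard proof. One small remark on part (ii): the step ``$\omega(J-\tilde J,K)=0$ for all $K\in W$ forces $(J-\tilde J)'(t_0)\in E_W(t_0)$'' actually gives $(J-\tilde J)'(t_0)\perp W(t_0)$, not membership in $W(t_0)$, so as written it does not quite close. The cleanest route is the dimension count you allude to at the end: choose $t_0$ generic so that both $W_{t_0}=0$ and $\Lambda_{t_0}=0$; then evaluation $J\mapsto J(t_0)$ is an isomorphism $\Lambda\to V$, and the subspace $\{J\in\Lambda: J(t_0)\in W(t_0)\}$ has dimension $\dim W(t_0)=\dim W$ and contains $W$, hence equals $W$. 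Since the kernel of $J\mapsto \bar J$ is contained in this subspace and contains $W$, it equals $W$. With that adjustment the sketch is fine and matches Wilking's original argument.
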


Because of the proposition above, we can identify the quotient $\Lambda/W$ with the corresponding isotropic space of $R^H$-Jacobi fields. Furthermore, by Lemma 3.1 of \cite{Lyt09}, for every $t\in I$ one has
\begin{equation}\label{E:focal-sum}
f_\Lambda(t)=f_W(t)+f_{\Lambda/W}(t)
\end{equation}
and in particular, for every compact subinterval $[a,b]\subset I$,
\begin{equation}\label{E:index-sum}
\ind_{[a,b]}\Lambda=\ind_{[a,b]}W+\ind_{[a,b]}{\Lambda/W}.
\end{equation}

\begin{example}\label{E:example}
Let $\pi:M\to B$ be a Riemannian submersion, $\gamma:I\to M$ a horizontal geodesic, let $\gamma_*=\pi(\gamma)$, and let $E=(\gamma'^\perp)$ be the vector bundle along $I$. Letting $W$ be the (isotropic) space of Jacobi fields along $\gamma$ such that $\pi_*J\equiv0$, it follows by the O'Neill's formulas that $H=E/E_W$ can be canonically identified with $(\gamma_*')^\perp$, in such a way that $R^H(v)=R^B(v, \gamma_*'(t))\gamma_*'(t)$ where $R^B$ denotes the Riemann curvature tensor of $B$.

Furthermore, letting $\Lambda\supseteq W$ denote the (Lagrangian) subspace of Jacobi fields $J$ along $\gamma$, obtained as variation of horizontal geodesics, and such that $\pi_*(J(0))=0$, then $\Lambda/W$ corresponds to the Lagrangian space of Jacobi fields $J_*(t)$ along $\gamma_*$, such that $J_*(0)=0$. In particular, in this case $f_{\Lambda/W}(t)$ counts the conjugate points of $\gamma_*(0)$ along $\gamma_*$.
\end{example}

\section{Manifold submetries}\label{A:manif-subm}

As mentioned in Section \ref{SS:manifold-submetries}, the definitions of singular Riemannian foliation and manifold submetries are very close. The two key features which characterize singular Riemannian foliations are:
\begin{enumerate}
\item The leaves are connected.
\item There is a family of smooth vector fields which span the tangent spaces to the leaves at all points.
\end{enumerate}

A lot of literature has focused mainly on singular Riemannian foliations, and uses the presence of smooth vector fields in several crucial places. The goal of this section is then to re-develop most of the basic results to the case of manifold submetries.
\\

\emph{In this whole section, we will assume $\sigma:M\to X$ is a $C^2$-manifold submetry unless states otherwise.}

\subsection{Homothetic Transformation Lemma, and stratification}\label{SS:HTL}
Let $\p:M\to X$ be a manifold submetry. Since leaves are equidistant, it follows from the first variation formula
\todo{(3): extra word ``first'' deleted}
for the length function that every geodesic starting perpendicular to a leaf, stays perpendicular to all the leaves it meets. Such geodesics are called \emph{horizontal geodesics}.

The first, fundamental result is the following (cf. \cite[Lemma 6.2]{Molino} for transnormal systems):

\begin{lemma}[Homothetic Transformation Lemma]\label{L:HTL}
Let $\p:M\to X$ be a manifold submetry, $L$ a fiber of $\p$, $P\subset L$ a relatively compact open subset of $L$ (called a \emph{plaque}), and let $\epsilon>0$ be small enough that 
for every $v\in \nu^{<\epsilon}P=\{v\in \nu P\mid \|v\|<\epsilon\}$, the geodesic $\gamma_v(t)=\exp(tv)$ minimizes the distance between $\gamma_v(1)$ and $P$. Then for any $\rho_1, \rho_2<\epsilon$ with $\rho_2=\lambda \rho_1$, the map
\[
h_\lambda: \exp(\nu^{\rho_1}P)\to \exp(\nu^{\rho_2}P), \qquad h_\lambda(\exp v):=\exp(\lambda v)
\]
sends fibers of $\p$ into other fibers.
\end{lemma}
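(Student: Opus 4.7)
My plan is to reduce the homothetic property to a direct application of Proposition \ref{P:facts} (4), which asserts that two horizontal geodesics whose initial velocities project to the same vector in $\Sigma_{p_*}X$ stay in a common $\sigma$-fiber for all time. Concretely, I would take $q_1,q_2\in\exp(\nu^{\rho_1}P)$ lying in a common $\sigma$-fiber, write $q_i=\exp(v_i)$ with $\|v_i\|=\rho_1$ and footpoints $p_i:=\pi(v_i)\in P\subset L$, and aim to show that $h_\lambda(q_1)=\exp(\lambda v_1)$ and $h_\lambda(q_2)=\exp(\lambda v_2)$ lie in the same fiber. This is immediate once one knows that $\sigma(\gamma_1(t))=\sigma(\gamma_2(t))$ for every $t$, where $\gamma_i(t):=\exp(tv_i)$: one then specializes to $t=\lambda\rho_1$.

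The first step is to observe that, choosing $\epsilon$ sufficiently small (and, if necessary, enlarging $P$ slightly so that distances to $P$ and to $L$ agree inside the tube), each $\gamma_i$ realizes $d(q_i,L)=\rho_1$. By Proposition \ref{P:facts} (2), $\sigma\circ\gamma_i$ is a quotient geodesic on $X$; since $\gamma_i$ is length-minimizing between $L$ and $q_i$, and $\sigma$ is a submetry, $\sigma\circ\gamma_i$ is in fact a genuine minimizing geodesic of length $\rho_1$ in $X$ from $p_*:=\sigma(L)$ to the common endpoint $q_*:=\sigma(q_i)$, with initial velocity $\sigma_{p_i}(\hat v_i)\in\Sigma_{p_*}X$, where $\hat v_i=v_i/\rho_1$.

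The decisive step is to show $\sigma_{p_1}(\hat v_1)=\sigma_{p_2}(\hat v_2)$. For this I would combine the local cone structure of $X$ at $p_*$, provided by the infinitesimal submetry $\sigma_p:\sphere_p\to\Sigma_{p_*}X$, with the no-branching property of geodesics in Alexandrov spaces: for $\epsilon$ small enough, minimizing geodesics of length $\le\epsilon$ from $p_*$ are determined by their endpoint, so the two projected curves $\sigma\circ\gamma_1$ and $\sigma\circ\gamma_2$ must coincide, and therefore so do their initial velocities. Proposition \ref{P:facts} (4) then yields $\sigma(\gamma_1(t))=\sigma(\gamma_2(t))$ for all $t$, and evaluating at $t=\lambda\rho_1$ gives the lemma. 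The hardest step is justifying the local injectivity of the Alexandrov exponential on $X$ at $p_*$: this is not automatic for general Alexandrov spaces, but for the base of a $C^2$ manifold submetry one has enough structure (the smoothness of the normal tube to $L$ in $M$, together with the description of $X$ near $p_*$ as a metric cone on $\Sigma_{p_*}X$ up to first order) to obtain it after shrinking $\epsilon$.
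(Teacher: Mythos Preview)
Your proposal has a circularity problem: Proposition \ref{P:facts} is logically downstream of Lemma \ref{L:HTL}. The inductive proof of Proposition \ref{P:facts} uses that the infinitesimal submetry $\p_p:\sphere_p\to\Sigma_{p_*}X$ is a \emph{manifold} submetry (point b.\ in that proof), which in turn is established via Lemma \ref{L:transverse-int}; and the proof of Lemma \ref{L:transverse-int} invokes the Homothetic Transformation Lemma. So you cannot appeal to Proposition \ref{P:facts}(4) here.

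That said, the underlying mechanism you are reaching for is exactly the one the paper uses, only applied directly and without the detour through Proposition \ref{P:facts}. The paper's argument is: with $\|v\|=\|v'\|=\rho_1<\epsilon$, both $\p\circ\gamma_v$ and $\p\circ\gamma_{v'}$ are minimizing geodesics in $X$ from $p_*$ that pass through $q_*$ and continue minimizing \emph{a bit past} $q_*$. Since $q_*$ is therefore an interior point of each projected geodesic, the no-branching property of Alexandrov spaces forces the two segments from $p_*$ to $q_*$ to coincide; evaluating at the intermediate time gives $\p(h_\lambda(q))=\p(h_\lambda(q'))$. This bypasses entirely what you flag as the ``hardest step'' (local injectivity of the exponential on $X$): one does not need any extra structure on $X$ beyond the standard no-branching of geodesics, and one does not need to compare initial directions in $\Sigma_{p_*}X$ at all. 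In particular, no appeal to the infinitesimal submetry is required, which is what makes the lemma available as a foundational tool for everything that follows.
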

\begin{proof}
Let $q=\exp v$, $q'=\exp v' \in \nu^{\rho_1} P$ be points such that $\p(q)=\p(q')=q_*$, and let $\p(P)=p_*$. By construction, the geodesics $\gamma_v(t)=\exp(tv)$ and $\gamma_{v'}(t)=\exp(tv')$ are projected to distance minimizing geodesics from $p_*$ to a bit past $q_*$. Since there is no bifurcation of geodesics in Alexandrov spaces, it follows that $\p(\gamma_v(t))=\p(\gamma_{v'}(t))=:\gamma_*(t)$ and therefore $h_\lambda(q)=\gamma_v(\lambda)$ and $h_\lambda(q')=\gamma_{v'}(\lambda)$ both project to $\gamma_*(\lambda)$.
\end{proof}

For any integer $r$, define $\Sigma^r\subset M$ the union of $\p$-fibers of dimension $r$. Any point $p\in M$ belongs to some stratum $\Sigma^r$, and we define the \emph{stratum through $p$}, and denote it by $\Sigma_p$ the union of connected components of $\Sigma^r$ containing the (possibly disconnected) fiber through $p$. As a direct application of the Homothetic Transformation Lemma, one has

\begin{proposition}[cf. \cite{Molino}, Proposition 6.3]
Given a manifold submetry $M\to X$, for every point $p\in M$ the stratum $\Sigma_p$ is a (possibly non-complete) smooth submanifold of $M$. Furthermore, for any relatively compact open subset $P\subset L$ of the leaf through $p$, there is an $\epsilon$ such that every horizontal geodesic from $p$ initially tangent to $\Sigma_p$ stays in $\Sigma_p$ at least up to distance $\epsilon$.
\end{proposition}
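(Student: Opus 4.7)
The plan is to adapt the classical argument of Molino \cite[Proposition 6.3]{Molino} from the setting of singular Riemannian foliations, with the Homothetic Transformation Lemma \ref{L:HTL} playing the role that smooth vector fields spanning the leaves play there; the argument proceeds by induction on $\dim M$. Since the statement is local, I fix $p\in M$, set $r=\dim L_p$, choose a relatively compact plaque $P\subset L_p$ through $p$, and work in a tubular neighborhood $U=\exp(\nu^{<\epsilon}P)$ with $\epsilon>0$ small enough that Lemma \ref{L:HTL} applies; let $h_\lambda:U\to U$ denote the associated homothety.

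First I will show that $\Sigma_p\cap U$ is $h_\lambda$-invariant: if $q=\exp(v)\in\Sigma_p\cap U$ and $\lambda\in(0,1]$, then Lemma \ref{L:HTL} gives $\dim L_{h_\lambda(q)}=r$, and the continuous curve $\lambda'\mapsto h_{\lambda'}(q)$ for $\lambda'\in[0,1]$ connects $q$ to a point of $P\subset L_p\subset\Sigma_p$ while staying in $\Sigma^r$. Hence $h_\lambda(q)\in\Sigma_p$, and $\Sigma_p\cap U=\exp(\tilde\Sigma)$ for some $\tilde\Sigma\subset\nu^{<\epsilon}P$ that is fiberwise conical and contains the zero section.

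Next I will identify, for each $q_0\in P$, the slice cross section $\tilde\Sigma\cap\nu^{<\epsilon}_{q_0}L_p$. By Section \ref{SS:inf-subm}, the fiber of $\sigma$ through $\exp(tv)$ (for $v\in\sphere_{q_0}$ and $t>0$ small) has dimension $r+\dim F_v$, where $F_v$ is the fiber of the infinitesimal submetry $\sigma_{q_0}:\sphere_{q_0}\to\Sigma_{\sigma(q_0)}X$ through $v$. Thus the slice cross section is (up to choice of connected component) the open cone on $E_{q_0}:=\{v\in\sphere_{q_0}:\dim F_v=0\}$ together with the apex, and $E_{q_0}$ is one of the strata of $\sigma_{q_0}$. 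Since $\sphere_{q_0}$ has dimension strictly less than $\dim M$, the inductive hypothesis applied to $\sigma_{q_0}$ makes $E_{q_0}$ a smooth submanifold of $\sphere_{q_0}$.

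The main technical obstacle is then to upgrade this to smoothness of $\Sigma_p$ \emph{at} the plaque $P$, which requires the cone $\Cone(E_{q_0})\cup\{0\}$ to be an \emph{honest linear subspace} $V_{q_0}\subset\nu_{q_0}L_p$: a smooth cone is in general not smooth at its apex. The plan is to observe that the slice partition extends by dilation to a manifold submetry on all of $\nu_{q_0}L_p$ that fixes the origin and is invariant under all positive dilations centered there, and to apply the induction to the stratum through the origin of this dilated slice submetry. A smooth submanifold of a Euclidean space passing through the origin and invariant under positive dilations must, by Taylor expansion at $0$, coincide with its tangent space at the origin (the homogeneity forces the defining graph function to be linear, and vanishing of its differential at $0$ forces it to vanish), hence is a linear subspace. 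Assembling the $V_{q_0}$ as $q_0$ varies over $P$ yields a smooth subbundle $V\subset\nu P$ with $\Sigma_p\cap U=\exp(V\cap\nu^{<\epsilon}P)$, which is a smooth submanifold of $M$. The second claim of the proposition is then immediate: a horizontal geodesic $\gamma$ from $p$ initially tangent to $\Sigma_p$ has $\gamma'(0)\in V_p$, so $\gamma(t)=\exp(t\gamma'(0))\in\exp(V\cap\nu^{<\epsilon}P)=\Sigma_p\cap U$ for all $t\in[0,\epsilon)$.
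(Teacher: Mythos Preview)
The paper does not give a proof, stating only that the result follows directly from the Homothetic Transformation Lemma along the lines of Molino, Proposition~6.3. Your outline is essentially Molino's argument and is mostly sound, but there is a gap at the step where you show that the cone $\Cone(E_{q_0})\cup\{0\}$ is a linear subspace.

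You propose to obtain smoothness of this cone at the apex by applying the inductive hypothesis to the slice manifold submetry on $\nu_{q_0}L_p$, and then to use dilation-invariance plus a Taylor expansion. However, $\dim\nu_{q_0}L_p=\dim M-r$ with $r=\dim L_p$, so when $r=0$ there is no reduction in dimension and the induction does not close. The standard fix---and this is how Molino's argument actually runs---is to carry \emph{both} clauses of the proposition through the induction and apply them to the \emph{spherical} submetry $\sigma_{q_0}:\sphere_{q_0}\to\Sigma_{\sigma(q_0)}X$, whose domain always has dimension strictly less than $\dim M$. You already invoke the first clause to conclude that $E_{q_0}$ is a smooth submanifold of $\sphere_{q_0}$. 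The second clause says that horizontal geodesics of $\sphere_{q_0}$ initially tangent to $E_{q_0}$ stay in $E_{q_0}$ for positive time; since the $\sigma_{q_0}$-fibers through points of $E_{q_0}$ are zero-dimensional, \emph{every} tangent vector at such a point is horizontal, and hence $E_{q_0}$ is totally geodesic in the round sphere. Therefore each component of $E_{q_0}$ is a great subsphere and its cone (with the apex) is a linear subspace of $\nu_{q_0}L_p$, which is what you need. This closes the induction uniformly in $r$ and makes the separate Taylor-expansion step unnecessary.
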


\begin{remark}\label{R:different-components-same-dimension}
It is important to notice that, in particular, if $\Sigma_p$ is disconnected, then different components will still have the same dimension.
\todo{(4): full stop added}
\end{remark}

\begin{lemma}\label{L:transverse-int}
Let $\p:M\to X$, $L$, $P$, and $\epsilon$ as above. Consider the closest-point map $f:\exp \nu^{<\epsilon}P\to P$. Given a $\p$-fiber $L'$ intersecting $\exp \nu^\epsilon P$, let $P':=L'\cap \exp\nu^{<\epsilon}P$ and $f'$ be the restriction of $f$ to $P'$. Then:
\todo{(5): restated lemma to define $L'$.}
\begin{enumerate}
\item The differential $d_qf'$ is surjective.
\item For any $p\in P$ and $x\in \nu^{<\epsilon}_pP$, the fiber $L'$ through $q:=\exp x$ is transverse to the slice $D_p:=\exp \nu^{<\epsilon}_pP$ at $q$.
\item The function $M\to \mathbb{Z}$, $p\mapsto \dim(L_p)$, is lower semicontinuous.
\end{enumerate}
\end{lemma}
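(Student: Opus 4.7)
The three parts are tightly linked, with part (1) being the main content and parts (2) and (3) following from it by short arguments. For part (2), under the diffeomorphism $\exp\colon \nu^{<\epsilon}P\to \exp\nu^{<\epsilon}P$ the slice $D_p$ corresponds to the bundle fiber $\nu^{<\epsilon}_p P$ and $f$ corresponds to the bundle projection $\nu^{<\epsilon}P\to P$; hence $T_q D_p = \ker d_q f$. The surjectivity asserted in (1) is therefore equivalent to $T_q L' + T_q D_p = T_q M$, which is precisely transversality. Part (3) then follows: if $p_n\to p$ in $M$, then $p_n$ eventually lies in $D_p$ and by (2) the fiber $L_{p_n}$ meets $D_p$ transversally at $p_n$, so $L_{p_n}\cap D_p$ is a smooth submanifold of dimension $\dim L_{p_n} - \dim L \geq 0$, giving $\dim L_{p_n}\geq \dim L = \dim L_p$ for all $n$ large and hence lower semicontinuity.

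For part (1), the plan is to realize any prescribed $u\in T_p L$ as $d_q f(\dot q(0))$ for a smooth curve $q(s)$ in $L'$. Write $q = \exp(v)$ with $v\in\nu_p L$, $|v|<\epsilon$, and choose a curve $\alpha\colon(-\delta,\delta)\to L$ with $\alpha(0)=p$, $\alpha'(0)=u$. I aim to produce a smooth family of unit vectors $\bar{v}(s)\in\sphere_{\alpha(s)}$ with $\bar v(0) = v/|v|$ and $\p_{\alpha(s)}(\bar v(s)) = \p_p(v/|v|) \in \Sigma_{\p(p)}X$ for all $s$ (here using that $\alpha(s)$ and $p$ lie in the same fiber, so the targets of the two infinitesimal submetries coincide). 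Setting $q(s):=\exp(|v|\bar v(s))$, Proposition \ref{P:facts}(4) then ensures that each horizontal geodesic $\gamma_s(t):=\exp(t|v|\bar v(s))$ projects to the same quotient geodesic $\gamma_*(t) = \p(\exp(tv))$ in $X$; in particular $q(s) = \gamma_s(1)\in \p^{-1}(\gamma_*(1)) = L'$. Since $f(q(s)) = \alpha(s)$, differentiation at $s=0$ yields $d_q f(\dot q(0)) = u$.

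The main obstacle is the existence of the smooth lift $s\mapsto\bar v(s)$. This amounts to showing that the subsets $\p_{p'}^{-1}(\p_p(v/|v|))\subset\sphere_{p'}$ assemble, as $p'$ varies in $L$, into a smooth subbundle of the unit sphere bundle of $\nu L\to L$. The Homothetic Transformation Lemma, applied to a plaque of $L$ containing $\alpha$, implies that the partition of $\nu^{<\epsilon}P$ induced by the $\p$-fibers (via the exponential diffeomorphism) is invariant under fiberwise radial rescaling, and hence is completely determined by its restriction to any single sphere bundle $\sphere^\rho(\nu P)\to L$; on each fiber $\sphere_{p'}$ this restriction coincides with the fibers of the infinitesimal submetry $\p_{p'}$. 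Combined with the smoothness of the $\p$-fibers as submanifolds of $M$, this shows that the partition on $\sphere^\rho(\nu P)$ varies smoothly over the base $L$, which produces the required smooth section and completes the proof of (1).
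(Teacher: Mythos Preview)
Your treatment of parts (2) and (3) is correct and matches the paper. The gap is in part (1): the existence of the smooth lift $s\mapsto\bar v(s)$ is \emph{equivalent} to the surjectivity you are trying to prove, so the argument is circular.

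Concretely, by the Homothetic Transformation Lemma the set $\{\bar w\in\sphere_{\alpha(s)} : \p_{\alpha(s)}(\bar w)=\p_p(v/|v|)\}$, scaled by $|v|$, is exactly $\exp^{-1}(P')\cap \nu^{|v|}_{\alpha(s)}P$. Hence a smooth curve $\bar v(s)$ through $v/|v|$ satisfying your condition is precisely a local section of the projection $\exp^{-1}(P')\to P$ through $v$, which exists if and only if $u=\alpha'(0)$ lies in the image of $d_qf'$. Your justification (``the partition varies smoothly over the base'') only says that $P'$ is a smooth submanifold of the tube, which is automatic since $P'$ is open in $L'$; it does not show that the projection $P'\to P$ is a submersion. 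There is also a logical circularity in citing Proposition~\ref{P:facts}(4): that proposition is proved by induction using the infinitesimal submetries $\p_p$, which are known to be \emph{manifold} submetries only via the present Lemma (see Section~\ref{SS:inf-subm}). One could sidestep this second issue by using non-branching of Alexandrov geodesics together with HTL directly, but that does not rescue the main circularity.

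The paper's proof is genuinely different and avoids this trap. Instead of trying to build a preimage curve for a given $u\in T_pL$, it starts from $T_qP'$, produces the isotropic space $W$ of Jacobi fields $J_v(t)=(h_t)_*v$ along $\gamma$, embeds $W$ in the Lagrangian family $\Lambda_{L'}$ generated by horizontal variations through $L'$, and then uses the Lagrangian decomposition $\gamma'(0)^\perp=\{J(0):J\in\Lambda_{L'}\}\oplus\{J'(0):J\in\Lambda_{L'},\,J(0)=0\}$ to write an arbitrary $w\in T_pP$ as a sum of four terms. The key input is that $\gamma$ is minimizing up to and slightly past $q$, so there are no conjugate or focal points; together with the isotropy/Lagrangian identities this kills all terms except one of the form $J_u(0)=d_qf'(u)$. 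The essential idea your attempt is missing is this use of the minimizing property of $\gamma$ via the focal theory of Lagrangian Jacobi fields.
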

\begin{proof}
1) Let $\gamma(t)=\exp tx$. For any vector $v\in T_qP'$, let $J_v(t)$ the Jacobi field defined by $J_v(t)=(h_{t})_*v$. By the Homothetic Transformation Lemma, $J_v(t)$ is tangent to the $\p$-fibers for all $t\in [0,1]$. In particular, $J_v(0)\in T_pP$. Let $W=\{J_v\mid v\in T_qP'\}$. Notice that $W$ is contained in the Lagrangian family $\Lambda_L$ consisting of Jacobi fields generated by variations of normal geodesics through $L$ (cf. Appendix \ref{A:Lagr}). In particular, $W$ is isotropic and any  $J_v\in W$ vanishing at $0$ satisfies $J_v'(0)\perp T_pL$.
\\

We can also embed $W$ in the Lagrangian space $\Lambda_{L'}$  of Jacobi fields generated by variations of horizontal geodesics through $L'$. Letting $\Lambda_0=\{J\in \Lambda_{L'}\mid J(1)=0,\,J'(1)\perp T_qP'\}$, we have $\Lambda_{L'}=\Lambda_0\oplus W$. By Section \ref{A:transvJE} we get
\[
\gamma'(t)^{\perp}=\{J(t)\mid J\in\Lambda_{L'}\}\oplus\{J'(t)\mid J\in \Lambda_{L'}, J(t)=0\}
\]
In particular, every $w\in T_pP$ can be written as
\begin{equation}\label{E:vector-decomposition}
w=J_{u}(0)+J_{v}'(0)+J_3(0)+J_4'(0),
\end{equation}
where $J_{u}, J_v\in W$, $J_3,J_4\in \Lambda_0$, and $J_v(0)=J_4(0)=0$. Notice that:
\begin{itemize}
\item $J_4=0$ because otherwise $p$ and $q$ would be conjugate points.
\item By the discussion above, $J_u(0)\in T_pP$ and $J_v'(0)\in \nu_pP$.
\item Taking the projection of Equation \ref{E:vector-decomposition} onto $\nu_pP$ and using the previous points, we get
\[
0=J_v'(0)+pr_{\nu_pP}J_3(0)
\]
However, by the definition of Lagrangian space of Jacobi fields,
\[
-\|J_v'(0)\|^2=\scal{J_v'(0), pr_{\nu_pP}J_3(0)}=\scal{J_v'(0), J_3(0)}=\scal{J_v(0), J_3'(0)}=0
\]
and thus $J_v'(0)=0$ and $J_3(0)\in T_pP$.
\item $J_3=0$, because otherwise $q$ would be a focal point for $q$, which is not possible because $\gamma$ keeps minimizing past $q$.
\end{itemize}
Therefore, it must be $w=J_u(0)$. Notice however that $J_u(0)=d_qf'(u)$ and therefore $d_qf':T_qP'\to T_pP$ is surjective.
\\

2)  Since the kernel of $d_qf:T_qM\to T_pL$ is $T_qD_p$ and $d_qf$ is surjective by the previous point, the result follows.

3) It is enough to prove that for every $p\in M$ there is a neighborhood $U$ around $p$ such that $\dim L_q\geq \dim L_p$ for every $q\in U$. This is exactly what point 1) shows.
\end{proof}

\begin{remark}
\begin{enumerate}
\item In the case of singular Riemannian foliations, the semicontinuity of the dimension of leaves follows immediately from the existence of smooth vector fields spanning the foliation.
\item Lemma \ref{L:transverse-int} shows that for every $r_0$, the union $\bigcup_{r\geq r_0}\Sigma^{r}$ is open. In particular, the \emph{regular part}, consisting of fibers of maximal dimension, is open in $M$.
\end{enumerate}
\end{remark}


\subsection{Generic strata}

In this section we assume that $\p:M\to X$ is a smooth manifold submetry with connected fibers, and let $M^{(2)}$ be the union of the strata $\Sigma_p$ of codimension $\leq 2$ (see Section \ref{SS:HTL}). The main result of this section will be to show that the fibers of $\p$ form a \emph{full} singular Riemannian foliation on $M^{(2)}$ (see Definition \ref{D:full} below).

\begin{lemma}\label{L:small-codim}
There are no strata of codimension 1. Moreover, let $\p:M\to X$ a manifold submetry, and $\Sigma_p$ be a stratum of codimension 2. Let $U$ be a relatively compact neighborhood of $p$ in $\Sigma_p$, and $\epsilon$ small enough that all normal geodesics from $U$ minimize the distance from $\Sigma_p$ up to time $\epsilon$. Let $B_\epsilon(U)=\exp \nu^{<\epsilon}(U)$. Then for any $q=\exp_{p'}v\in B_\epsilon(U)\setminus U$, $v\in \nu_{p'}^{<\epsilon}(U)$, the $\p$-fiber through $q$ is given by $S_d(L_{p'})\cap S_d(U)$ where $d=dist(q,U)$ and $S_d(L_{p'})$ (resp. $S_d(U)$) denotes the boundary of the tube of distance $d$ around $L_{p'}$ (resp. around $U$).
\end{lemma}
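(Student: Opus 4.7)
The two parts will be argued by separate methods.

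For the first part (no codimension-1 strata), I would proceed by contradiction: suppose $\Sigma_p$ has codimension 1 in $M$. Consider the slice $D_p = \exp(\nu_p^{<\epsilon}L_p)$ and a nearby point $q \in D_p \setminus \{p\}$. Lemma~\ref{L:transverse-int} gives $\dim L_q \geq \dim L_p$, while transversality controls the intersection $L_q \cap D_p$. I would rule out the two possibilities in turn: if $\dim L_q = \dim M$, then $L_q$ is both open and closed in $M$ (as a top-dimensional submanifold and as a $\sigma$-fiber), hence a union of connected components of $M$, which contradicts the existence of a non-constant quotient geodesic through $\sigma(q)$ in the slice direction (Proposition~\ref{P:facts}(2)). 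If $\dim L_q = \dim L_p$ for all $q$ in a neighborhood of $p$ in $D_p$, then such $q$ all lie in $\Sigma^{\dim L_p}$ and, by openness of $\Sigma_p$ in $\Sigma^{\dim L_p}$, in $\Sigma_p$ itself; this forces $T_p D_p \subseteq T_p \Sigma_p$ and, combined with $T_p L_p \subseteq T_p \Sigma_p$ and the transverse decomposition $T_p D_p \oplus T_p L_p = T_p M$, gives $T_p \Sigma_p = T_p M$, contradicting codimension 1. A mixed situation would be handled by the same principle, relocating to a point $p'' \in \Sigma_p$ where one of the two cases above applies.

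For the second part, I would prove $L_q = S_d(L_{p'}) \cap S_d(U)$ by establishing both inclusions. The inclusion $L_q \subseteq S_d(L_{p'}) \cap S_d(U)$ follows directly from equidistance of $\sigma$-fibers: for any $q' \in L_q$ and any fiber $L_{p''}$ meeting $U$, one has $\dist(q', L_{p''}) = \dist(q, L_{p''})$, and the tubular-neighborhood choice together with $v \perp U$ yields $\dist(q, L_{p'}) = \dist(q, U) = d$ and $\dist(q, L_{p''}) \geq d$ for other $L_{p''} \subseteq U$, whence $\dist(q', L_{p'}) = \dist(q', U) = d$.

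For the reverse inclusion, take $q'' \in S_d(L_{p'}) \cap S_d(U)$ and a length-$d$ distance minimizer from $q''$ to $U$; since $\dist(q'', L_{p'}) = \dist(q'', U) = d$ is realized jointly, its foot can be chosen at some $p''' \in L_{p'} \cap U$, giving $q'' = \exp_{p'''}(w)$ with $w \in \nu_{p'''}U$ and $|w| = d$. Proposition~\ref{P:facts}(4), applied to $p', p''' \in L_{p'}$, then reduces the task to verifying $\sigma_{p'}(v/d) = \sigma_{p'''}(w/d)$ in $\Sigma_{\sigma(p')}X$. The hard part will be this infinitesimal identity. My plan is to establish it by a dimension count showing that for a codimension 2 stratum the regular fibers have dimension exactly $\dim L_{p'} + 1$ (otherwise the image $\sigma(\Sigma_p) \subseteq X$ would be top-dimensional yet contained in $X^{\sing}$, contradicting density of $X^{\reg}$). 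Consequently, the $2$-dimensional normal slice $\nu_{p''}\Sigma_p$ is foliated by $1$-dimensional $\sigma$-fibers contained in the concentric spheres around the origin; since a closed $1$-dimensional submanifold of a circle is the entire circle, this foliation consists of concentric circles, and the restriction of $\sigma_{p''}$ to $\sphere(\nu_{p''}\Sigma_p) = S^1$ is constant, yielding the required identity.
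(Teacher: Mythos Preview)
Your argument for the absence of codimension-$1$ strata has a genuine gap: the dichotomy you set up is not exhaustive. For $q$ in the slice $D_p$ near $p$, Lemma~\ref{L:transverse-int} only gives $\dim L_q \geq \dim L_p$, and nothing prevents $\dim L_p < \dim L_q < \dim M$ for \emph{every} such $q$ (and for every relocated $p''\in\Sigma_p$). Your ``mixed situation'' remark does not address this. The paper avoids this case analysis entirely by a uniform dimension count: one first shows that $\dim L_q > \dim L_{p'}$ strictly (since $q\notin\Sigma_p$, using Lemma~\ref{L:transverse-int} and the Homothetic Transformation Lemma), and then observes that
\[
S_d(L_{p'})\cap S_d(U)=\exp\big(\nu^{=d}U\big|_{L_{p'}}\big)
\]
is a manifold of dimension $\dim L_{p'}+\operatorname{codim}\Sigma_p-1$. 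The inclusion $L_q\cap B_\epsilon(U)\subseteq S_d(L_{p'})\cap S_d(U)$ (your equidistance argument, which is fine) then forces $\dim L_q\leq \dim L_{p'}+\operatorname{codim}\Sigma_p-1$. For $\operatorname{codim}\Sigma_p=1$ this contradicts $\dim L_q>\dim L_{p'}$; for $\operatorname{codim}\Sigma_p=2$ it forces $\dim L_q=\dim L_{p'}+1$, hence $L_q\cap B_\epsilon(U)$ is an open and closed submanifold of $S_d(L_{p'})\cap S_d(U)$ of the same dimension, so they coincide.

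For the codimension-$2$ statement your strategy is not wrong in spirit, but it is considerably more laborious than needed, and the justification you give for ``regular fibers have dimension exactly $\dim L_{p'}+1$'' is shaky: the claim that otherwise $\sigma(\Sigma_p)$ would be top-dimensional in $X$ and hence contradict density of $X^{\reg}$ requires knowing that $\sigma(\Sigma_p)$ has nonempty interior in $X$, which you have not established. The paper obtains this dimension fact for free as a consequence of the containment above, and then the reverse inclusion $S_d(L_{p'})\cap S_d(U)\subseteq L_q$ falls out immediately without any appeal to Proposition~\ref{P:facts}(4) or to the structure of the infinitesimal submetry on the normal circle.
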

\begin{proof}
First of all notice that $q\notin \Sigma_p$ and, by Lemma \ref{L:transverse-int} and the Homothetic Transformation Lemma, $\dim(L_q)>\dim L_{p'}$. By definition of $\epsilon$, it follows $d=\dist(q,U)=\dist(q, L_{p'})=\dist(q,p')$.
\todo{(6): lines added to compute the dimension of $S_d(L_{p'})\cap S_d(U)$.}
Notice furthermore that $S_d(L_{p'})\cap S_d(U)=\exp \nu^dU\big|_{L_{p'}}$ is a manifold and, since $U$ has codimension 2 in $M$ by assumption, one has
\[
\dim S_d(L_{p'})\cap S_d(U)=\dim \exp \nu^dU\big|_{L_{p'}}= \dim \nu^dU\big|_{L_{p'}}= \dim L_{p'}+1.
\]
By equidistance of the $\p$ fibers, the fiber $L_q$ through $q$ must lie in $S_d(L_{p'})$. Moreover since $\dist(\cdot,U)=\inf_{r\in U}\dist(\cdot,L_{r})$, it follows that the distance from $U$ is constant along the $\p$-fibers in $B_\epsilon(U)$, and thus $L_q$ must lie in $S_d(U)$ as well. Therefore, $L_q\cap B_\epsilon(U)$ is contained in $S_d(L_{p'})\cap S_d(U)$. On the other hand, it is easy to see that $\dim(S_d(L_{p'})\cap S_d(U))=\dim L_p+\operatorname{codim} \Sigma_p-1$. Thus, if $\Sigma_p$ was a stratum of codimension 1, then $\dim L_q\leq \dim L_p$ which would give a contradiction, hence there are no strata of codimension 1. Now letting $\Sigma_p$ be a stratum of codimension $2$, this would imply that $\dim (L_q\cap B_{\epsilon}(U))\leq \dim(S_d(L_{p'})\cap S_d(U))=\dim L_{p'}+1$ and the only possibility is that the inequality is in fact an equality, in which case $L_q\cap B_{\epsilon}(U)=S_d(L_{p'})\cap S_d(U)$.
\end{proof}

Recall Definition \ref{D:srf} for the notion of singular Riemannian foliation. We now define the concept of \emph{full singular Riemannian foliation} (see \cite{Lyt10}):
\begin{definition}\label{D:full}
A singular Riemannian foliation $(M,\F)$ is called \emph{full} if for every point $p\in M$ there exists an $\epsilon$, such that the normal exponential map $\exp:\nu^{<\epsilon}L\to M$ from the leaf $L$ through $p$ is well defined.
\end{definition}
We can now prove the following:
\begin{proposition}\label{P:orbifold-part-srf}
Let $\p:M\to X$ be a manifold submetry with connected fibers. Then the partition $(M^{(2)},\F)$ of $M^{(2)}$ into the fibers of $\p|_{M^{(2)}}$ is a full singular Riemannian foliation.
\end{proposition}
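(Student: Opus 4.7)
The plan is to verify separately the three ingredients required by Definition \ref{D:srf} together with Definition \ref{D:full}: transnormality of the partition, existence of smooth vector fields spanning the leaves at every point of $M^{(2)}$, and fullness of the resulting foliation. Transnormality is essentially free from the manifold submetry structure: fibers are smooth and equidistant, so any geodesic starting perpendicular to one fiber stays perpendicular to every fiber it meets, by the first variation of arc length argument already used in Lemma \ref{L:HTL}. Connectedness of leaves is given by hypothesis, and completeness follows from the fibers being closed in $M$. So the partition $(M^{(2)},\mathcal{F})$ is automatically a transnormal system.

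For the smooth vector field condition, the easy case is at points of the regular stratum $M^{\reg}\subseteq M^{(2)}$: here the tangent distribution to the leaves is a smooth subbundle of $TM^{\reg}$ (because $\sigma$ restricts to a Riemannian submersion onto a smooth manifold locally, or alternatively because regular leaves locally foliate $M^{\reg}$ as fibers of a constant rank smooth map via basic polynomials), hence is locally spanned by smooth vector fields obtained from any local frame. The serious case is at a point $p$ of a codimension-2 stratum $\Sigma_p$. Here I would invoke the explicit description in Lemma \ref{L:small-codim}: choosing a relatively compact open plaque $U\subset \Sigma_p$ around $p$ and $\epsilon>0$ small, every point $q=\exp_{p'}v \in B_\epsilon(U)\setminus U$ lies on the leaf $S_d(L_{p'})\cap S_d(U)$ with $d=|v|$, which has dimension $\dim L_{p'}+1$. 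Thus near $p$ the leaves fiber over leaves inside $U$ with one-dimensional fibers in the normal two-disk slices.

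To produce smooth leaf-tangent vector fields near $p$, I would split into two families. First, take local smooth vector fields $X_1,\dots,X_k$ on $U$ spanning the leaves of $\sigma|_U$ (available by induction on codimension, or directly since $U$ has its own partition and is of strictly smaller codimension, being regular inside $\Sigma_p$ so that the $X_i$ are smooth local frames of $TL_{p'}$), and extend them to $B_\epsilon(U)$ by horizontal lifting along the normal exponential map; equidistance plus the Homothetic Transformation Lemma guarantees that these extensions remain tangent to the leaves. Second, the normal two-disk $D_{p'}=\exp\nu^{<\epsilon}_{p'}U$ inherits from the infinitesimal submetry $\sigma_{p'}:\mathbb{S}^1\to Y$ an explicit rotational $\mathbb{S}^1$-action by isometries whose orbits are the fibers of $\sigma_{p'}$ (because $Y\in\SA_1$ is either an arc $[0,\pi/k]$ or a circle, forcing $\sigma_{p'}$ to be the quotient by such a rotation); the infinitesimal generator of this action is a smooth vector field $R_{p'}$ on $D_{p'}$ tangent to the leaves. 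Propagating $R_{p'}$ over $U$ using the smooth identification of normal disks given by the Homothetic Transformation Lemma yields a smooth vector field on $B_\epsilon(U)$ tangent to the leaves, and together with the $X_i$ it spans the tangent spaces at every point.

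Finally, fullness is immediate from the Homothetic Transformation Lemma itself: given any leaf $L\subset M^{(2)}$ and a point $p\in L$, choose a relatively compact plaque $P\subset L$ around $p$ and let $\epsilon$ be the injectivity radius bound from Lemma \ref{L:HTL}; then $\exp:\nu^{<\epsilon}P\to M^{(2)}$ is well defined and realizes the required tubular neighborhood. The main obstacle I anticipate is the construction of the rotational vector field $R_{p'}$ in a way that depends smoothly on $p'\in U$: one must verify that the local $\mathbb{S}^1$-action on each normal slice varies smoothly with the basepoint, which is where the explicit formula from Lemma \ref{L:small-codim} (identifying the fiber as $S_d(L_{p'})\cap S_d(U)$) and the codimension-2 restriction are essential; in higher codimension such a rotational symmetry need not be available, which is precisely why the statement is restricted to $M^{(2)}$.
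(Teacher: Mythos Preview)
Your approach is correct and close to the paper's, but differs in two ways worth noting.

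First, the paper does not construct the rotational field $R_{p'}$ at all. The reason is that the singular Riemannian foliation condition only asks that each individual tangent vector $v\in T_pL_p$ extend locally to a smooth leaf-tangent vector field; one does not need a single local frame spanning simultaneously at all nearby points. So for $p$ in a codimension-$2$ stratum, $T_pL_p\subset T_p\Sigma_p$, and it suffices to extend vectors tangent to $\Sigma_p$. Regular points $q$ near $\Sigma_p$ are handled separately by the codimension-$0$ case in their own small neighborhoods. Your construction of $R_{p'}$ is correct (and your diagnosis of the smoothness issue is the right one: in normal coordinates it is just $x\partial_y-y\partial_x$, hence smooth across $U$), but it is more than is needed.

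Second, and more substantively, your assertion that the horizontal lifts of the $X_i$ ``remain tangent to the leaves'' by ``equidistance plus the Homothetic Transformation Lemma'' is exactly the heart of the matter, and you are waving at it. The paper makes this precise via a \emph{linearization} procedure: take any smooth extension $V_2$ of $V_1$ to $B_\epsilon(U)$, form $V_2^\ell=\lim_{\lambda\to 0}(h_\lambda)_*^{-1}(V_2\circ h_\lambda)$, and then project to the orthogonal complement of $\ker d\pi$ (with $\pi:B_\epsilon(U)\to U$ the closest-point map). The resulting vector field is the horizontal lift of $V_1$ with respect to $\pi$, and one then checks that its flow commutes with $\pi$ and preserves distance to $U$, hence preserves the sets $S_d(L_{p'})\cap S_d(U)$, which by Lemma~\ref{L:small-codim} are the leaves. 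Your horizontal lift is the same object; you just need this flow argument to close the loop.

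Finally, your fullness argument produces an $\epsilon$ only for a plaque $P$, whereas Definition~\ref{D:full} requires one for the entire leaf $L$. This is harmless once you observe (as the paper does) that the leaves, being closed in the compact spheres where this proposition is applied, are compact.
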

\begin{proof}
Since the fibers of $\p$ are connected by assumption, the only thing to prove is that every vector $v$ tangent to a $\p$-fiber, can be locally extended to a vector field everywhere tangent to the $\p$-fibers. Once this is proved, the foliation is automatically full since every leaf $L$ of $\F$ is compact.

Fix $p\in M^{(2)}$, let $L$ be the $\p$-fiber through $p$ and $\Sigma_p$ the stratum through $p$, and fix $v\in T_pL$. Clearly if the codimension of $\Sigma_p$ is zero, then $\p$ is a Riemannian submersion around $L$ and it is straightforward to produce a local vector field $V$ everywhere tangent to the $\p$-fibers extending $v$. Furthermore, by Lemma \ref{L:small-codim} $\Sigma_p$ cannot have codimension 1, which only leaves the case of $\Sigma_p$ having codimension 2. In this case, $\p|_{\Sigma_p}$ is still a Riemannian submersion, and any vector $v\in T_pL$ can be extended to a vector field $V_1$ in $\Sigma_p$, tangent to the $\p$-fibers. Take a neighborhood $U$ of $p$ in $\Sigma_p$, and let $\epsilon$ small enough, as in Lemma \ref{L:small-codim}. We can extend $V_1$ to a vector field $V$ in $B_\epsilon(U)$ as follows: first take any extension $V_2$ of $V_1$ to $B_{\epsilon}(U)$. Secondly, define the \emph{linearization of $V_2$ along $U$} as
\[
V_2^{\ell}=\lim_{\lambda\to 0}(h_{\lambda})_*^{-1}(V_2\circ h_{\lambda}),
\]
where $h_\lambda:B_\epsilon(U)\to B_{\lambda \epsilon}(U)$ denotes the homothetic transformation $\exp v\mapsto \exp \lambda v$, $v\in \nu^{<\epsilon}U$. By the properties of linearized vector fields
\todo{(7a): made precise reference to the proposition we need here}
(cf. \cite{MR}, Proposition 13) $V_2^\ell$ is still smooth and it projects to $V_1$ via the closest-point-map projection $\pi:B_\epsilon(U)\to U$. In particular, letting $K\subset T B_\epsilon(U)$ be the smooth distribution given by $\ker(\pi_*)$, the projection $V=pr_{K^\perp}V_2^\ell$ is
\todo{(7b): changed the sentence, hoping that it is less confusing}
the unique vector field perpendicular to the $\pi$-fibers which projects to $V_1$ via $\pi$ (that is, $V$ the \emph{horizontal  extension} of $V_1$ with respect to the submersion  $\pi:B_{\epsilon}(U)\to U$). It then follows that the flows $\Phi^t_V$, $\Phi^t_{V_1}$ satisfy $\pi\circ \Phi_{V}^t=\Phi_{V_1}^t\circ \pi$.
\todo{(7c): changed the sentence, trying to make it less confusing.}
The flow lines of $V$ (which stay at a constant distance from $U$ by the first variation of length) are thus also equidistant to $L'$ thus $V$ is tangent to the intersections $S_d(U)\cap S_d(L')$. These, by Lemma \ref{L:small-codim}, coincide with the leaves $L_q\cap B_{\epsilon}(U)$, $q\in B_\epsilon(U)$.


Summing up $V$ is a local vector field, everywhere tangent to the leaves, which coincides with the vector $v$ at $p$. Since $v$ was arbitrary, $(M^{(2)},\F)$ is a (full) singular Riemannian foliation.
\end{proof}

\end{appendix}

\bibliographystyle{alpha}
\bibliography{ref}
\end{document}